\newtheorem*{lemma*}{Lemma}
\newcommand\NN{\mathbb{N}}
\newcommand\RR{\mathbb{R}}
\newcommand\CC{\mathbb{C}}
\newcommand\A{A} 
\newcommand\saA{\A^\text{sa}}
\newcommand\cA{\A_\text{c}}
\newcommand\B{\mathcal{B}^\A}
\newcommand\mA{M(\A)}
\newcommand\samA{\mA^\text{sa}}
\newcommand\mB{\mathcal{B}^{\mA}}
\newcommand\evnA{\A''} 
\newcommand\Proj{\mathcal{P}\left(\evnA\right)} 
\newcommand\cProj{\overline{\mathcal{P}}\left(\A\right)} 
\newcommand\oProj{\mathring{\mathcal{P}}\left(\A\right)} 
\newcommand\supp{\operatorname{supp}} 
\newcommand\csupp{\overline{\operatorname{supp}}} 
\newcommand\dilup{\Lambda_L}
\newcommand\dilupsum{\Lambda^1_L}
\newcommand\dilupsup{\Lambda^\infty_L}
\newcommand{\simp}[1]{\Delta^#1}
\newcommand{\osimp}[1]{\mathring{\Delta}^#1}
\newcommand{\uppreimage}[1]{{#1}^{-1}_\geq}
\newcommand{\locflat}[1]{\mathcal{B}^a_0}
\newcommand\States{\mathcal{S}(\A)}
\newcommand\mStates{\mathcal{S}(\mA)}
\newcommand\cStates{\mathcal{S}_\text{c}(\A)}
\newcommand\LcomStates{\mathcal{S}_\text{c}^L(\A)}
\newcommand{\Code}[2]{\Sigma_{#1}({#2})}
\newcommand{\infCode}[1]{\Sigma_{#1}}
\newcommand\dLto{\xrightarrow{d_L}}
\newcommand\wsto{\xrightarrow{w*}}
\newtheorem{thm}{Theorem}[section]
\newtheorem{lemma}[thm]{Lemma}
\newtheorem{cor}[thm]{Corollary}
\theoremstyle{definition}
\newtheorem{defn}[thm]{Definition}
\newtheorem{conj}[thm]{Conjecture}
\newcommand{\noop}[1]{}
\begin{document}

\title{Self-similar states and projections in noncommutative metric spaces}

\author{Sean Harris}
\address{Hanna Neumann Building \#145, Science Road
The Australian National University
Canberra ACT 2601.}
\email{Sean.Harris@anu.edu.au}     

\date{\today}

\thanks{The author gratefully acknowledges financial support by the Statistical Science Research Endowment and the MSI at the Australian National University, and ARC Discovery Project DP160100941.}

\begin{abstract}
We present a generalisation of the theory of iterated function systems and associated fractals to the setting of noncommutative geometry. Along the way, we discuss some ideas surrounding locally compact noncommutative metric spaces.
\end{abstract}

\subjclass{Primary: 46L89, 28A80; Secondary: 58B34, 46L85}
\keywords{Iterated function system, Fractals, Noncommutative geometry, Noncommutative metric space, Noncommutative topology, Self-similarity.}
 
 \maketitle

\section{Classical IFS theory} 
 
Before delving into the noncommutative world, we briefly explain the fundamentals of classical iterated function system (IFS) and fractal theory (see \cite{Hutchinson} for precise statements and proofs).

Let $(X,d)$ be a complete metric space. An iterated function system (IFS) on $X$ is a collection of functions $\mathbb{G} = \left(g_i:X \to X\right)_{i=1}^k$. The IFS $\mathbb{G}$ is said to be \emph{strictly contractive} if the maximum of the Lipschitz constants 
\[\Lambda_d(g_i) = \sup_{x \neq y} \frac{d(g(x),g(y))}{d(x,y)}\]
of the maps composing $\mathbb{G}$ is less than $1$.

Given a strictly contractive IFS $\mathbb{G}$, there is a unique non-empty compact subset $K_\mathbb{G} \subset X$ satisfying the following equation
\[K_\mathbb{G} = \bigcup_{i=1}^k g_i\left(K_\mathbb{G}\right),\]
which is known as the \emph{attractor} of $\mathbb{G}$. The above equation demonstrates self-similarity of $K_\mathbb{G}$, a set which is often - but not always - ``fractal" in nature (fractional dimension, boundary of positive measure, etc.).

Alternatively, given a weight $\pi = (\pi_i)_{i=1}^k$ (i.e a $k$-tuple of non-negative numbers summing to $1$), there exists a unique compactly-supported regular Borel probability measure $\mu$ on $X$ such that
\[\mu = \sum_{i=1}^k \pi_i g_i^*\mu,\]
where $g_i^*\mu$ is the pushforward of $\mu$ by $g_i$. Providing none of the weights in $\pi$ is $0$, the support of $\mu$ is exactly $K_\mathbb{G}$.

In fact, one obtains a (typically not compactly-supported) self-similar measure as above even if the maps composing $\mathbb{G}$ are only contractive ``on average" (see \cite{Elton1987} and references therein).

There are many representation formulas for both the attractor and the self-similar measures arising from a strictly contractive IFS, along with related ergodic theorems. For example, the attractor of a strictly contractive IFS $\mathbb{G}$ is the closure of the fixed points of all finite compositions of maps in $\mathbb{G}$, and the associated self-similar measures are weak-$*$ limits of certain convex combinations of Dirac masses of said fixed points.

Of particular interest to harmonic analysis are self-similar tilings. Existence of self-similar tilings relies on the theory of IFS and fractals. For example, the well-known Haar decomposition (otherwise known as the dyadic decomposition) of function spaces on $\mathbb{R}^n$ decomposes a function into pieces whose supports lie within dyadic boxes. Alternatively, the Littlewood-Paley decomposition decomposes a function on $\mathbb{R}^n$ into pieces whose Fourier transforms are supported on a tiling of the Pontryagin dual of $\mathbb{R}^n$ (excluding the origin) by dyadic annuli.

Both the Haar and Littlewood-Paley decompositions are indispensable tools of Euclidean harmonic analysis, and much of their usefulness is related to their self-similar nature.

Suppose one is instead interested in harmonic analysis on a (non-Abelian) locally compact group $G$. Analogues of the Haar decomposition in such a setting have been studied (see Chapter 3 of \cite{myPhDThesis} and references therein). However, there is no longer a Pontryagin dual to support an analogue of the Littlewood-Paley decomposition. In its place is instead the noncommutative topological space associated with $C^*(G)$, the group $C^*$-algebra of $G$. Thus, one should look for self-similar ``tilings" of $C^*(G)$.

This article provides an extension of IFS and fractal theory and results to the setting of noncommutative geometry. It is hoped that such a theory will be a first step in making the above heuristic a reality.

\section{Spectral metric spaces}\label{secMetricSpaces}
In order to extend the classical theory of IFS to noncommutative geometry, we need metric information. For this, we follow the $C^*$-algebraic approach of Rieffel, Latr{\'e}moli{\`e}re, etc. (see \cite{Rieffel2003}, \cite{Latremoliere2013}), rather than the von Neumann algebraic approach of Kuperberg and Weaver (see \cite{KuperbergWeaver2010}).

Let $\A$ be a $C^*$-algebra, and let $L:\saA \to [0, \infty]$ be an extended semi-norm on the self-adjoint part of $\A$ such that

\begin{enumerate}

\item \label{propertyDense} The set $\B := \{c \in \saA; L(c)<\infty\}$ is norm dense in $\saA$.


\item \label{propertyAI} There is an approximate identity $(I_n)_{n \in N} \subset \A$ with $L(I_n) \to 0$.

\item \label{propertyLSC} $L$ is $||\cdot||$-lower semi-continuous, i.e. for any $c \geq 0$, $\{b \in \saA; L(b) \leq c\}$ is the $||\cdot||$-closure of $\{b \in \saA; L(b) < c\}$.

\end{enumerate}

We call such a pair $(\A, L)$ an extended complete spectral metric space.

The classical picture one should have in mind is that $L$ is the Lipschitz semi-norm on the set of real valued functions on a locally compact (extended) metric space vanishing at infinity (in the topological sense rather than the metric sense). 

We let $\mA$ denote the multiplier algebra of $\A$ (for details regarding $\mA$, see \cite{PedersenCstarAndAutomorphisms1979}), and will consider in the usual way $\A$ and $\mA$ as subalgebras of the universal enveloping von Neumann algebra $\evnA$ of $\A$ (which we identify with the second dual of $\A$). Denote by $\States$ and $\mStates$ the sets of states of $\A$ and $\mA$ respectively. We will identify $\States$ with the  subset of $\mStates$ consisting of those states which are strictly continuous. For $R>0$ we let $\A_R, \mA_R$ denote the closed unit balls of $\A$, $\mA$ respectively, and set

\[\B_R := \{c \in \saA; L(c) \leq R\}.\]

We define an extended distance function $d_L: \States \times \States \to [0, \infty]$ by the formula

\begin{equation}\label{eqndL}
d_L(\phi, \psi) = \sup_{b \in \B_1} |\phi(b)-\psi(b)|.
\end{equation}

Equivalently,

\[d_L(\phi, \psi) = \inf\{C>0; ~ |\phi(b)-\psi(b)| \leq C L(b) \text{ for all } b \in \saA\}.\]

The extended distance function $d_L$ will be referred to as the spectral distance. In the case that $\A = C_0(X)$ for some locally compact extended metric space $(X,d)$ and $L$ is taken to be the usual Lipschitz semi-norm induced by $d$, then the inclusion of $X$ into $\States$ given by sending $x \in X$ to the evaluation functional $\text{ev}_x$ is an isometry, i.e. $d(x,y) = d_L(\text{ev}_x, \text{ev}_y)$.

Note that $d_L$ may take the value $\infty$. Much work has been done in the area of noncommutative metric spaces to avoid these infinite distances, although some natural examples exhibit infinitely distant states. Thus, we will instead make no further restrictions at this point, which comes at the price of many of our theorem statements including conditions related to infinite distances. We will re-examine this topic in Subsection \ref{subsecTopology}.

We will assume that
\begin{equation}\label{eqnLConsistent}
L(b) = \inf\{C>0; ~ |\phi(b)-\psi(b)| \leq C d_L(\phi, \psi), ~\phi,\psi \in \States\}.
\end{equation}
We can do so without loss of generality for the following reason: given any $L$ satisfying our three requisite properties (even just the first two), one may create a $\tilde{L}$ satisfying all three properties such that $d_L = d_{\tilde{L}}$, with $\tilde{L}$ also satisfying the above equality by simply defining $\tilde{L}(b)$ to be the quantity appearing on the right in Equation (\ref{eqnLConsistent}). Norm lower-semicontinuity of $\tilde{L}$ follows as the pointwise supremum of the norm continuous functions $b \mapsto |\phi(b)-\psi(b)|/d_L(\phi,\psi)$, for $\phi \neq \psi \in \States$. Note that for any $b \in \saA$, $\tilde{L}(b) \leq L(b)$.

We will also need to consider a natural extension of $L$ to $\samA$, the self-adjoint part of the multiplier algebra of $\A$, which we still denote $L$. This is defined by exactly the same formula as in Equation \ref{eqnLConsistent}, but for $b \in \samA$. Note that $L$ is lower semicontinuous in the strict topology on $\samA$, as the functions $b \mapsto |\phi(b)-\psi(b)|/d_L(\phi, \psi)$ are strictly continuous (since the strictly continuous linear functionals on $\mA$ are exactly the (unique extensions) of norm continuous linear functionals on $\A$). We denote by $\mB$ the set of $b \in \samA$ with $L(a)<\infty$, and $\mB_R$ the (strictly closed) set of elements $b \in \samA$ with $L(b)\leq R$.

Under the extension of $L$ to $\samA$, we similarly extend $d_L$ to an extended distance function on all of $\mStates$ via the formulas
\[d_L(\phi, \psi) = \sup_{b \in \mB_1} |\phi(b)-\psi(b)| = \inf\{C>0; ~ |\phi(b)-\psi(b)| \leq C L(b) \text{ for all } b \in \samA\}\]

Note that by strict density of $\A$ in $\mA$, strict continuity of (the extensions) of states in $\States$, and strict lower-semicontinuity of the extension of $L$ to $\samA$, the above formula for $d_L$ agrees with the previous when restricted to $\phi, \psi \in \States$.

The following is an immediate consequence of Property \ref{propertyDense}.

\begin{lemma}\label{lemmadLtoImplieswsto}
If $\phi_n \dLto \phi$, then $\phi_n \wsto \phi$
\end{lemma}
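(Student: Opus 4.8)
The plan is to run a routine three-$\epsilon$ density argument, reducing weak-$*$ convergence to pointwise convergence on the norm-dense subspace $\B$. First I would unwind the hypothesis: by Equation (\ref{eqndL}), $d_L(\phi_n,\phi)\to 0$ gives $|\phi_n(b)-\phi(b)|\le d_L(\phi_n,\phi)\to 0$ for every $b\in\B_1$, and hence, after rescaling, $\phi_n(b)\to\phi(b)$ for every $b\in\B$.

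Next I would upgrade this to pointwise convergence on all of $\saA$ using Property (\ref{propertyDense}) and the fact that states are positive functionals of norm at most $1$, hence contractive. Fix $a\in\saA$ and $\epsilon>0$, pick $b\in\B$ with $\|a-b\|<\epsilon$, and estimate
\[
|\phi_n(a)-\phi(a)| \le |\phi_n(a-b)| + |\phi_n(b)-\phi(b)| + |\phi(b-a)| \le 2\|a-b\| + |\phi_n(b)-\phi(b)| < 2\epsilon + |\phi_n(b)-\phi(b)|.
\]
Sending $n\to\infty$ and using the first step gives $\limsup_n|\phi_n(a)-\phi(a)|\le 2\epsilon$; since $\epsilon$ is arbitrary, $\phi_n(a)\to\phi(a)$. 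Finally, for general $a\in\A$ I would split $a=\re a + i\,\im a$ with $\re a,\im a\in\saA$ and invoke linearity to conclude $\phi_n(a)\to\phi(a)$, which is exactly $\phi_n\wsto\phi$.

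There is essentially no obstacle here — this is why the statement is flagged as an immediate consequence of Property (\ref{propertyDense}). The only points that warrant a moment's attention are that the states $\phi_n$ are uniformly norm-bounded, so that the $\|a-b\|$ terms in the estimate are controlled independently of $n$, and the minor bookkeeping of whether the weak-$*$ topology is taken against $\A$ or, for states in $\States\subset\mStates$, against $\mA$; in the latter case strict density of $\A$ in $\mA$ and strict continuity of the extensions of states in $\States$ reduce matters to the $\A$-level statement already proved.
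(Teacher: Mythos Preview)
Your argument is correct and is precisely the intended unpacking of the paper's one-line claim that this is ``an immediate consequence of Property~(\ref{propertyDense})'': the paper gives no further proof, and your density/three-$\epsilon$ argument with uniform norm-boundedness of states is the standard justification.
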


While the next two results are consequences of Property \ref{propertyAI} (similar to work of Mesland and Rennie in \cite{MeslandRennie2016}).

\begin{lemma}
For any states $\phi \in \States$ and $\psi \in \mStates \backslash \States$, $d_L(\phi, \psi)=\infty$.
\end{lemma}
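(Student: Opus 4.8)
The plan is to argue by contradiction, using the approximate identity of Property~\ref{propertyAI} as a ``test element''. Fix $\phi \in \States$ and $\psi \in \mStates \backslash \States$, and suppose toward a contradiction that $d_L(\phi,\psi) = C < \infty$. Let $(I_n)_{n \in N}$ be an approximate identity for $\A$ with $L(I_n) \to 0$; we take it, as is standard, to consist of positive contractions (note that for $L(I_n)$ to be meaningful one needs $I_n \in \saA$ in any case). Each $I_n$ then lies in $\samA$ with $L(I_n) < \infty$. From $d_L(\phi,\psi) = \inf\{C' > 0 : |\phi(b) - \psi(b)| \le C' L(b) \text{ for all } b \in \samA\} = C$ we get $|\phi(b) - \psi(b)| \le C' L(b)$ for every $b \in \samA$ and every $C' > C$, hence in the limit $C' \to C$ the bound $|\phi(I_n) - \psi(I_n)| \le C\, L(I_n)$ for all $n$. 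Since $L(I_n) \to 0$, we conclude $\phi(I_n) - \psi(I_n) \to 0$.

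Next I would pin down the behaviour of each state along $(I_n)$. Because $\phi$ is (identified with) a state of $\A$, and a state of a $C^*$-algebra takes values converging to its norm along an approximate identity of that algebra, $\phi(I_n) = (\phi|_\A)(I_n) \to 1$; hence $\psi(I_n) \to 1$ as well. Since $\psi$ is a state of $\mA$ and $0 \le I_n \le 1$, we have $\psi(I_n) \le \|\psi|_\A\| \le 1$, so $\psi(I_n) \to 1$ forces $\|\psi|_\A\| = 1$, i.e.\ $\psi|_\A$ is a state of $\A$. A state of $\A$ has a unique extension to a state of $\mA$, and that extension is strictly continuous, so $\psi$ must be that extension, i.e.\ $\psi \in \States$ --- contradicting $\psi \in \mStates \backslash \States$. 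Hence $d_L(\phi,\psi) = \infty$. If one prefers to avoid quoting uniqueness of the extension, the Cauchy--Schwarz bound $|\psi((1-I_n)x)|^2 \le \psi\big((1-I_n)^2\big)\,\psi(x^*x) \le 2\big(1 - \psi(I_n)\big)\|x\|^2$, valid because $(1-I_n)^2 \le 2(1-I_n)$, shows $\psi(x) = \lim_n \psi(I_n x I_n)$ for all $x \in \mA$; as $I_n x I_n \in \A$, this exhibits $\psi$ as determined by, and strictly continuous from, $\psi|_\A$.

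I expect the only real subtlety to be bookkeeping around the approximate identity: making sure it may be taken positive and contractive (so that $L(I_n)$ and the order inequalities $0 \le I_n \le 1$, $(1-I_n)^2 \le 2(1-I_n)$ are legitimate), and correctly invoking the two standard facts used --- that a state is recovered in the limit of its values on an approximate identity of the algebra on which it lives, and that states of $\A$ extend uniquely and strictly continuously to $\mA$ (the latter being precisely what makes the identification $\States \subset \mStates$ sensible). With those in hand the argument is a short chase through the definition of $d_L$ together with the hypothesis that $\psi$ is \emph{not} strictly continuous.
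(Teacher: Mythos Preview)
Your proof is correct and follows essentially the same approach as the paper: test against the approximate identity $(I_n)$ with $L(I_n)\to 0$, use $\phi(I_n)\to 1$, and exploit the fact that $\psi\notin\States$ forces $\psi(I_n)\not\to 1$. The paper phrases this last step contrapositively (``$\limsup \psi(I_n)<1$ since $\psi$ is not a state when restricted to $\A$''), whereas you argue directly that $\psi(I_n)\to 1$ would make $\psi|_\A$ a state and hence $\psi$ its unique strictly continuous extension; these are the same fact, and your version is arguably more explicit about why it holds.
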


\begin{proof}
We have assumed that there is an approximate identity $(I_n)_{n \in N} \subset \A$ with $L(I_n) \to 0$. Then $\phi(I_n) \to 1$, while $\limsup \psi(I_n) < 1$ since $\psi$ is not a state when restricted to $\A$. Hence $\liminf |\phi(I_n)-\psi(I_n)|>0$, while $L(I_n) \to 0$. Rearranging, we find $d_L(\phi, \psi)=\infty$.
\end{proof}

In English, the previous lemma asserts that topological infinity is infinitely distant, as one should hope.

\begin{lemma}\label{lemmaComplete}
Let $(\A, L)$ be an extended complete spectral metric space. Then $\left(\States, d_L\right)$ is complete.
\end{lemma}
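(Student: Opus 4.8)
The plan is to exploit the weak-$*$ compactness of $\mStates$ (valid since $\mA$ is unital) to extract a limit point of a $d_L$-Cauchy sequence, and then invoke the preceding lemma --- topological infinity is infinitely distant --- to force that limit point to be a genuine strictly continuous state of $\A$, after which the standard metric-space argument closes things out.

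Concretely, let $(\phi_n) \subset \States$ be $d_L$-Cauchy. First I would view it inside the weak-$*$ compact set $\mStates$ and pass to a weak-$*$ convergent subnet $\phi_{n_\alpha} \wsto \psi \in \mStates$. The crucial step is to show $\psi \in \States$. Fix $N$ with $d_L(\phi_n, \phi_m) \le 1$ for all $n, m \ge N$. Then $|\phi_n(b) - \phi_m(b)| \le 1$ for every $b \in \mB_1$ and all such $n, m$; letting the subnet index run (so that $n_\alpha \ge N$ eventually) gives $|\phi_N(b) - \psi(b)| \le 1$ for all $b \in \mB_1$, and taking the supremum over $b$ yields $d_L(\phi_N, \psi) \le 1 < \infty$ (computed via the $\mStates$-formula, since $\psi$ is a priori only a state of $\mA$). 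As $\phi_N \in \States$, the preceding lemma forbids $\psi \in \mStates \setminus \States$, so $\psi \in \States$.

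It then remains to promote weak-$*$ convergence of the subnet to $d_L$-convergence of the whole sequence, which is the usual fact that a Cauchy sequence possessing a convergent subnet converges to that subnet's limit: given $\epsilon > 0$, pick $N$ with $d_L(\phi_n, \phi_m) \le \epsilon$ for $n, m \ge N$, note $|\phi_n(b) - \phi_m(b)| \le \epsilon$ for all $b \in \mB_1$, run the subnet index to obtain $|\phi_n(b) - \psi(b)| \le \epsilon$ for all $n \ge N$ and all $b \in \mB_1$, and take the supremum to conclude $d_L(\phi_n, \psi) \le \epsilon$, i.e.\ $\phi_n \dLto \psi$.

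I expect the middle step to be the only real obstacle: nothing in the metric structure by itself keeps the weak-$*$ limit inside $\States$, since the strictly continuous states need not form a weak-$*$ closed subset of $\mStates$, and it is precisely Property \ref{propertyAI} --- entering through the preceding lemma --- that rescues the argument. One should also take care to read all the estimates off the $\mStates$-version of $d_L$ (suprema over $b \in \mB_1 \subset \samA$) until $\psi$ has been identified as an element of $\States$; the agreement of the two formulas for $d_L$ on $\States$, already noted in the text, then makes the bookkeeping consistent.
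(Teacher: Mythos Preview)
Your argument is correct and follows essentially the same route as the paper: extract a weak-$*$ cluster point in $\mStates$ by compactness, pass to the limit inside the Cauchy estimate $|\phi_n(b)-\phi_m(b)|\le\epsilon$ for $b\in\mB_1$ to obtain $d_L$-convergence, and invoke the preceding lemma to force the limit into $\States$. The only cosmetic difference is that the paper establishes $d_L$-convergence first and then deduces membership in $\States$, whereas you separate these into two passes; your use of a subnet rather than a subsequence is in fact slightly more careful, since $\mStates$ need not be weak-$*$ metrizable.
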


\begin{proof}
Suppose that $\{\phi_n\}_{n=1}^\infty \subset \States$ is $d_L$-Cauchy. By weak-$*$ compactness of $\mStates$ (the state space of a unital $C^*$-algebra), we may pass to a subsequence and assume that $\phi_n \wsto \phi \in \mStates$. For any $\epsilon>0$ we can pick $M$ such that for all $m, n>M$ and $b \in \mB_1$ we have
\[\left|\phi_n(b) - \phi_m(b)\right| < \epsilon.\]
Taking $m \to \infty$ and using the fact that $\phi_m \wsto \phi$, this implies that for all $b \in \mB_1$, $n>M$,
\[\left|\phi_n(b) - \phi(b)\right| < \epsilon.\]
So $\phi_n \dLto \phi \in \mStates$. By applying the previous lemma we can deduce that $\phi \in \States$, so $(\States, d_L)$ is complete. 
\end{proof}


\section{Dual IFS, existence and uniqueness of self-similar states}

In generalising classical IFS to the noncommutative setting, one may naturally think to work with a finite collection of proper $*$-endomomorphisms of a $C^*$-algebra $A$ (i.e. $*$-homomorphisms which send approximate units to approximate units). However, this turns out to be too restrictive a setting to capture even the commutative state of affairs. Indeed, if $A = C_0(X)$ for some locally compact Hausdorff space $X$, then proper $*$-endomorphisms of $A$ are exactly precompositions with \emph{proper} continuous functions from $X$ to itself (proper meaning that the pre-image of each compact set is again compact). For example, this would eliminate IFS in which any of the maps have compact image (providing $X$ itself is not compact). 

We thus wish to allow for ``non-proper" maps. For the noncommutative setting, we have the following definition:

\begin{defn}
A $*$-homomorphism $f:A \to M(B)$ is called relatively proper if there is an increasing approximate identity $(I_n)_{n \in N}$ such that $f(I_n) \to I \in B$ strictly (i.e. for any $b \in B \subset M(B)$, $||f(I_n)b - b||, ~||bf(I_n)-b|| \to 0$).
\end{defn}

Note that relatively proper $*$-homomorphisms from $A$ to $M(B)$ are exactly Lance's ``morphisms" from $A$ to $B$, which have been introduced in the study of Hilbert $C^*$-modules (see \cite{LanceHilbCModules}). Lance provides the following knowledge regarding their structure (simplified for our setting, in which Lance's $E$ is taken to be $\mathcal{K}(B) \cong B$, for which $\mathcal{L}(E) \cong M(B)$).

\begin{thm}[\cite{LanceHilbCModules}, Prop 2.1, 2.5]\label{thmLanceMorphism}
Suppose $f:A \to M(B)$ is a $*$-homomorphism. The following are equivalent:
\begin{itemize}
\item $f$ is relatively proper.
\item $f$ is the restriction of a (unique) unital $*$-homomorphism $M(A) \to M(B)$ which is strictly continuous on the unit ball.
\end{itemize}
\end{thm}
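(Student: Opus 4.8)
The plan is to prove the two implications separately. The reverse implication is immediate, and the forward one is the substantive half, amounting to the standard fact that a non-degenerate $*$-homomorphism into a multiplier algebra extends uniquely to a strictly-continuous unital $*$-homomorphism on the multiplier algebra.

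For the reverse implication, suppose $f$ is the restriction of a unital $*$-homomorphism $\tilde f\colon M(A)\to M(B)$ that is strictly continuous on the unit ball. Pick any increasing approximate identity $(I_n)_{n\in N}$ for $A$. Read inside $M(A)$, the conditions $I_n a\to a$ and $aI_n\to a$ say exactly that $(I_n)$ is a net in the closed unit ball of $M(A)$ converging strictly to $1_{M(A)}$. Strict continuity of $\tilde f$ on the unit ball then gives $f(I_n)=\tilde f(I_n)\to\tilde f(1_{M(A)})=1_{M(B)}$ strictly in $M(B)$, which is precisely the assertion that $f$ is relatively proper.

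For the forward implication, I would first observe that relative properness of $f$ is equivalent to $f$ being non-degenerate, i.e.\ that the closed linear span of $f(A)B$ is all of $B$ and likewise for $Bf(A)$: if $f(I_n)\to 1$ strictly then $f(I_n)b\to b$ places every $b\in B$ in that span, while conversely, for non-degenerate $f$ and any approximate identity, $f(I_n)b\to b$ is checked by approximating $b$ by elements of $f(A)B$ and using $f(I_na)\to f(a)$ in norm together with the uniform bound $\|f(I_n)\|\le 1$. I would then construct the extension by the usual double-centralizer method. For $m\in M(A)$ and $c\in B$, the bound $\|f(mI_n)\|\le\|m\|$ and the norm convergence $mI_na\to ma$ (valid because $m$ acts boundedly on $A$) show that $\bigl(f(mI_n)c\bigr)_n$ is Cauchy; define $\tilde f(m)c:=\lim_n f(mI_n)c$. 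One immediately reads off $\|\tilde f(m)\|\le\|m\|$ and, on elementary products, $\tilde f(m)f(a)=f(ma)$.

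The remaining work is a sequence of limiting arguments, and the step I expect to be the main obstacle is checking that $\tilde f(m)$ is a genuine (two-sided) element of $M(B)$ rather than merely a bounded left-module map. Viewing $B$ as a right Hilbert module over itself, so that $M(B)=\mathcal L_B(B)$, this is the adjointability identity $(\tilde f(m)x)^*y=x^*(\tilde f(m^*)y)$, which in turn reduces to the equality $\lim_n f(mI_n)c=\lim_n f(I_nm)c$ for all $c\in B$; the latter is proved by testing against $c=f(a)c'$ and pushing the limit into $A$, where $(I_n)$ acts as an approximate identity on whichever of $a$ or $m^*a$ is needed. Granting this, the rest is routine: multiplicativity of $\tilde f$ follows from $\tilde f(m)f(a)=f(ma)$ and continuity of $\tilde f(m)$; $\tilde f$ is unital precisely because $\lim_n f(I_n)c=c$ (relative properness again); $\tilde f$ restricts to $f$ because $\lim_n f(aI_n)c=f(a)c$; strict continuity on the unit ball follows from $m_\lambda a\to ma$ in norm when $m_\lambda\to m$ strictly with $\|m_\lambda\|\le 1$, upgraded to all $c\in B$ by the uniform bound; and uniqueness is forced since $mI_n\to m$ strictly with $mI_n\in A$, so any competing extension must coincide with $\lim_n f(mI_n)$. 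Only the $C^*$-algebraic structure is used; the spectral-metric hypotheses on $L$ play no role here.
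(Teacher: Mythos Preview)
The paper does not supply its own proof of this theorem: it is stated with a citation to Lance (Propositions~2.1 and~2.5 of \cite{LanceHilbCModules}) and then used without further argument. Your proposal is therefore not competing with anything in the paper itself, and what you have written is essentially the standard proof one finds in Lance's book, recast in the language of approximate identities rather than Hilbert $C^*$-modules.

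Your argument is correct in outline. The reverse implication is exactly as you say. For the forward implication, the equivalence of relative properness with non-degeneracy is fine, and the double-centralizer construction $\tilde f(m)c:=\lim_n f(mI_n)c$ is the right idea; existence of the limit on elements of the form $c=f(a)c'$ follows from $mI_na\to ma$ in norm (since $ma\in A$ and left multiplication by $m$ is continuous), and then extends by density and the uniform bound. The adjointability step you flag as the crux is handled just as you describe, by showing $\lim_n f(mI_n)c=\lim_n f(I_nm)c$ on the dense set $f(A)B$, which again reduces to $I_n(ma)\to ma$ in $A$. The remaining verifications (multiplicativity, unitality, restriction to $f$, strict continuity on bounded sets, uniqueness) are routine once this is in place. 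You are also right that the Lipschitz seminorm $L$ is irrelevant here.
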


We will still use $f$ to denote the unique extension guaranteed by the second point above. Note that the second point also implies that given relatively proper $*$-homomorphisms $f:A \to M(B)$ and $g:B \to M(C)$, one may compose the unique extensions of $f$ and $g$ to obtain $g \circ f:M(A) \to M(C)$ which is still strictly continuous on the unit ball and hence restricts to a relatively proper $*$-homomorphism from $A$ to $M(C)$. We will compose such morphisms in this way in future.

Relative properness ensures that the dual map $f^*: M(B)^* \to A^*$ sends (strictly continuous extensions of) states on $B$ to states on $A$, as is verified below.

\begin{lemma}\label{lemmaAdjointStatetoState}
Suppose $f:A \to M(B)$ is a relatively proper $*$-homomorphism, and $\phi \in \mathcal{S}(B)$. Then $f^*(\phi) \in \mathcal{S}(A)$.
\end{lemma}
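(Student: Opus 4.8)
The plan is to show directly that $f^*(\phi) = \phi \circ f$ is a positive linear functional on $A$ of norm exactly one, where as in the surrounding text $\phi$ is understood as its unique strictly continuous extension to $M(B)$. Positivity is immediate: for $a \in A$ with $a \geq 0$, write $a = b^*b$, so that $f(a) = f(b)^*f(b) \geq 0$ in $M(B)$, and hence $f^*(\phi)(a) = \phi(f(a)) \geq 0$ since the extension of $\phi$ is a positive functional on $M(B)$. Linearity is clear, so $f^*(\phi)$ is a positive linear functional on $A$ and it only remains to pin down its norm.

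For the upper bound, since $f$ is a $*$-homomorphism it is contractive, so $\|f^*(\phi)\| \leq \|\phi\| = 1$. For the reverse inequality I would use the standard fact that a positive linear functional on a $C^*$-algebra attains its norm as the limit of its values on any increasing approximate identity of norm-one elements. The key point is to apply this not with an arbitrary approximate identity but with the particular one $(I_n)_{n \in N}$ witnessing relative properness of $f$, so that $f(I_n) \to I$ strictly in $M(B)$. This gives $\|f^*(\phi)\| = \lim_n f^*(\phi)(I_n) = \lim_n \phi(f(I_n))$.

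Now I would invoke the identification of $\phi \in \mathcal{S}(B)$ with its strictly continuous extension to $M(B)$, which in particular sends $I$ to $1$: since $f(I_n) \to I$ strictly and the extension is strictly continuous, $\lim_n \phi(f(I_n)) = \phi(I) = 1$. Combining with the upper bound yields $\|f^*(\phi)\| = 1$, so $f^*(\phi) \in \mathcal{S}(A)$.

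There is essentially no obstacle here; the only thing requiring care is to feed in the correct approximate identity (the one from the definition of relative properness) and to recall that a state on $B$ is being treated as its strictly continuous extension to $M(B)$. Alternatively, one could route the argument through Theorem \ref{thmLanceMorphism}: $f$ extends to a unital $*$-homomorphism $M(A) \to M(B)$ which is strictly continuous on the unit ball, so $\phi \circ f$ is a unital (hence norm-one, hence state) functional on $M(A)$ whose restriction to $A$ is $f^*(\phi)$, and strict continuity of this extension on $M(A)$ — from strict continuity of $f$ on the unit ball and of $\phi$ — places $f^*(\phi)$ in $\mathcal{S}(A)$.
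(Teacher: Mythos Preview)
Your proof is correct and follows essentially the same approach as the paper: choose the approximate identity from the definition of relative properness and use strict continuity of the extended state to get $\lim_n \phi(f(I_n)) = \phi(I) = 1$. The paper simply omits the positivity and upper-bound checks as obvious, but the core argument is identical.
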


\begin{proof}
The only possible issue is that the norm of $f^*(\phi)$ is less than $1$. Pick an increasing approximate identity $(I_n)_{n \in N}$ for $A$ as in the definition of relative properness. Noting $\mathcal{S}(B) \subset \mathcal{S}(M(B))$ are exactly the strictly continuous states, we have

\[\lim_{n \in N}f^*(\phi)(I_n) = \lim_{n \in N}\phi(f(I_n))  = \phi(\lim_{n \in N}f(I_n)) = \phi(I) = 1.\]

So $||f^*(\phi)||=1$.
\end{proof}

\begin{defn}
A dual iterated function system (dual IFS) on a $C^*$-algebra $\A$ is a finite list $\mathbb{F} = (f_1, \ldots, f_k)$ of relatively proper $*$-homomorphisms $f_i:\A \to \mA$.
\end{defn}

We denote the standard $(k-1)$-simplex by
\[\simp{k} = \{(\pi_1, \ldots, \pi_k) \in \RR^k; \pi_1, \ldots, \pi_k \in [0,1], \pi_1+\ldots+\pi_k =1\}\]
and the strict $(k-1)$-simplex by
\[\osimp{k} = \{(\pi_1, \ldots, \pi_k) \in \RR^k; \pi_1, \ldots, \pi_k \in (0,1), \pi_1+\ldots+\pi_k =1\}.\]

\begin{defn}
Given a dual IFS $\mathbb{F} = (f_1, \ldots, f_k)$ and $\pi = (\pi_1, \ldots, \pi_k) \in \simp{k}$, define $\pi \cdot \mathbb{F}^*:\mA^* \to \A^*$ by
\[(\pi\cdot\mathbb{F}^*)\phi = \sum_{i=1}^k \pi_i f_i^*\phi = \sum_{i=1}^k \pi_i \phi\circ f_i.\]
\end{defn}

Since dual IFS consist of relatively proper $*$-homomorphisms, $\pi\cdot\mathbb{F}^*$ restricts to a map $\States \to \States$.

We set $\Code{k}{M} = \{1, \ldots, k\}^M$ to be the length $M$ \emph{code space} on $k$ symbols. Given $\pi \in \simp{k}$, a dual IFS $\mathbb{F} = (f_1, \ldots, f_k)$, and $(\omega_1, \ldots, \omega_M) = \omega \in \Code{k}{M}$, we set
\[\pi_\omega = \prod_{m=1}^M \pi_{\omega_m},\]
and
\[f_\omega = f_{\omega_1} \circ \ldots \circ f_{\omega_M}.\]
Note that $\sum_{\omega \in \Code{k}{M} } \pi_\omega = 1$. A quick computation verifies that
\[(\pi\cdot\mathbb{F}^*)^M\phi = \sum_{\omega \in \Code{k}{M}} \pi_\omega f_\omega^*\phi\]
for each $M$. We will make use of this fact throughout the paper.

\begin{defn}
A state $\phi \in \States$ satisfying
\[\phi = (\pi\cdot\mathbb{F}^*)(\phi)\]
is called $(\pi\cdot\mathbb{F}^*)$-self-similar, or just self-similar when $\pi$ and $\mathbb{F}$ are clear from context.
\end{defn}

\begin{defn}
Given an extended complete spectral metric space $(\A, L)$ and an endomorphism $f$ on $\A$, the upper dilation factor $\dilup(f) \in [0, \infty]$ is defined by
\[\dilup(f) = \sup\{L(f(b)); L(b) \leq 1\}.\]

Given a $\pi \in \simp{k}$ and a dual IFS $\mathbb{F} = (f_1, \ldots, f_k)$ on $\A$, we likewise define the $1$- and $\infty$- dilation factors of $\Pi\cdot\mathbb{F}^*$ by
\[\dilupsum(\pi\cdot\mathbb{F}^*) = \sum_{i=1}^k \pi_i \dilup(f_i) \in [0, \infty],\]
and
\[\dilupsup(\mathbb{F}) = \max_{i=1, \ldots, n} \dilup(f_i) \in [0, \infty].\]
We say a dual IFS $\mathbb{F}$ is strictly contractive if $\dilupsup(\mathbb{F})<1$.
\end{defn}

Note that given some $C\in [0,\infty]$ and dual IFS $\mathbb{F}$, the set of $\pi \in \simp{k}$ for which $\dilupsum( \pi\cdot\mathbb{F}^*) < C$ is a (possibly empty) simplex, and is hence (path) connected.

\begin{lemma}\label{lemmaLip}
Let $(\A, L)$ be an extended complete spectral metric space, $\mathbb{F} = (f_1, \ldots, f_k)$ a dual IFS on $\A$ and $\pi \in \simp{k}$. Then for any $\phi, \psi \in \States$,
\[d_L\left((\pi\cdot\mathbb{F}^*)(\phi), (\pi\cdot\mathbb{F}^*)(\psi)\right) \leq \dilupsum(\pi\cdot\mathbb{F}^*)d_L(\psi, \psi).\]
\end{lemma}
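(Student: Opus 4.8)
The plan is to prove the inequality in its equivalent form
\[d_L\bigl((\pi\cdot\mathbb{F}^*)(\phi),(\pi\cdot\mathbb{F}^*)(\psi)\bigr) \le \dilupsum(\pi\cdot\mathbb{F}^*)\, d_L(\phi,\psi)\]
by establishing the corresponding estimate on test elements. Since $\pi\cdot\mathbb{F}^*$ maps $\States$ into $\States$ (as noted after its definition, using Lemma \ref{lemmaAdjointStatetoState} together with convexity of the state space), and $d_L$ on $\States$ may be computed as a supremum over the ball $\mB_1\subset\samA$, it suffices to show that for every $b\in\mB_1$,
\[\bigl|(\pi\cdot\mathbb{F}^*)(\phi)(b) - (\pi\cdot\mathbb{F}^*)(\psi)(b)\bigr| \le \dilupsum(\pi\cdot\mathbb{F}^*)\, d_L(\phi,\psi),\]
and then pass to the supremum over such $b$.

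First I would unwind the left-hand side. Writing each $f_i$ also for its unital extension $M(\A)\to M(\A)$ from Theorem \ref{thmLanceMorphism}, one checks that, viewed as a strictly continuous state on $M(\A)$, $(\pi\cdot\mathbb{F}^*)(\phi)$ evaluates on $b$ to $\sum_{i=1}^k \pi_i\,\phi(f_i(b))$; and since each $f_i$ is a $*$-homomorphism, $b=b^*$ forces $f_i(b)\in\samA$, so every pairing below makes sense. The triangle inequality then gives
\[\bigl|(\pi\cdot\mathbb{F}^*)(\phi)(b) - (\pi\cdot\mathbb{F}^*)(\psi)(b)\bigr| \le \sum_{i=1}^k \pi_i\,\bigl|\phi(f_i(b)) - \psi(f_i(b))\bigr|.\]

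The heart of the argument is the dilation bound $L(f_i(b)) \le \dilup(f_i)\,L(b) \le \dilup(f_i)$ for $b\in\mB_1$, which follows directly from the definition of $\dilup(f_i)$ and homogeneity of $L$ (working with the extension of $L$ to $\samA$; if one prefers to keep $\dilup$ defined via $\saA$, this is promoted to multipliers using strict lower semicontinuity of $L$ on $\samA$ and strict continuity of $f_i$ on the unit ball). Hence, when $0<\dilup(f_i)<\infty$, the rescaled element $f_i(b)/\dilup(f_i)$ lies in $\mB_1$, so
\[\bigl|\phi(f_i(b)) - \psi(f_i(b))\bigr| \le \dilup(f_i)\sup_{c\in\mB_1}\bigl|\phi(c)-\psi(c)\bigr| = \dilup(f_i)\, d_L(\phi,\psi);\]
the bound is trivial if $\dilup(f_i)=\infty$, and if $\dilup(f_i)=0$ then $f_i(b)\in\mB_\epsilon$ for every $\epsilon>0$, whence $|\phi(f_i(b))-\psi(f_i(b))| \le \epsilon\,d_L(\phi,\psi)$ for all $\epsilon>0$. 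Summing the per-$i$ estimates and using $\sum_i\pi_i\dilup(f_i)=\dilupsum(\pi\cdot\mathbb{F}^*)$ yields the required pointwise bound, and taking the supremum over $b\in\mB_1$ completes the proof.

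I expect the only real friction to be the bookkeeping of infinite values: verifying the rescaling still delivers the claim when some $\dilup(f_i)$, or $d_L(\phi,\psi)$, equals $0$ or $\infty$. When $d_L(\phi,\psi)<\infty$ this is immediate; the single delicate configuration is $d_L(\phi,\psi)=\infty$ together with $\dilupsum(\pi\cdot\mathbb{F}^*)=0$, in which the inequality must be read with the right-hand side interpreted as $\infty$. The remaining point worth a moment's attention — slightly less routine than the usual ``Lipschitz constant of a composition'' calculation — is that the dilation estimate $L(f_i(b))\le\dilup(f_i)L(b)$ is needed for self-adjoint \emph{multipliers} $b$, not merely for $b\in\saA$.
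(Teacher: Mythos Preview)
Your proof is correct and follows essentially the same route as the paper's: expand $(\pi\cdot\mathbb{F}^*)(\phi)(b)-(\pi\cdot\mathbb{F}^*)(\psi)(b)$, apply the triangle inequality, bound each term $|\phi(f_i(b))-\psi(f_i(b))|$ via the dilation factor and $d_L(\phi,\psi)$, sum, and take the supremum. The only cosmetic differences are that the paper ranges $b$ over $\B_1\subset\saA$ (invoking the $\samA$-formulation of $d_L$ only for $f_i(b)$) and passes through the intermediate bound $L(f_i(b))d_L(\phi,\psi)$ rather than rescaling, and it does not spell out the $0/\infty$ edge cases you handle.
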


\begin{proof}
Given $\phi, \psi \in \States$ and $b \in \B_1$, we have
\begin{align*}
\left| (\pi\cdot\mathbb{F}^*)(\phi)(b) - (\pi\cdot\mathbb{F}^*)(\psi)(b) \right| &= \left| \sum_{i=1}^k \pi_i \left(\phi(f_i(b)) - \psi(f_i(b))\right) \right| \\
&\leq \sum_{i=1}^k \pi_i \left|\phi(f_i(b)) - \psi(f_i(b))\right| \\
&\leq \sum_{i=1}^k \pi_i L(f_i(b))d_L(\phi, \psi) \\
&\leq \sum_{i=1}^k \pi_i \dilup(f_i) d_L(\phi, \psi) \\
&= \dilupsum(\pi\cdot\mathbb{F}^*) d_L(\phi, \psi),
\end{align*}
where the second inequality follows from the equivalent formulation of $d_L$ mentioned in Section \ref{secMetricSpaces} and the fact that the distance between states in $\States$ can be calculated by pairing against elements of $\samA$ rather than just $\saA$. Taking the supremum over $b \in \B_1$ gives the result.
\end{proof}

The previous theorem and lemma provide the immediate corollary:

\begin{thm}\label{thmExistenceSelfSimilarState}
Let $(\A, L)$ be an extended complete spectral metric space, $\mathbb{F} = (f_1, \ldots, f_k)$ a dual IFS on $\A$ and $\pi \in \simp{k}$. Suppose $\dilupsum(\pi\cdot\mathbb{F}^*)<1$, and that there exists a $\phi_0 \in \States$ such that
\[d_L\left(\phi_0, (\pi\cdot\mathbb{F}^*)(\phi_0)\right)<\infty.\]
Then there exists at least one $(\pi\cdot\mathbb{F}^*)$-self-similar state. In particular, the sequence
\[\left((\pi\cdot\mathbb{F}^*)^M(\phi_0)\right)_{M=0}^\infty \subset \States\]
is $d_L$-convergent with $(\pi\cdot\mathbb{F}^*)$-self-similar limit.
The set of all $(\pi\cdot\mathbb{F}^*)$-self-similar states is convex, and such that if $\phi$ and $\phi'$ are self-similar and $d_L(\phi, \phi')<\infty$, then $\phi = \phi'$.
\end{thm}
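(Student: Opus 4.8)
The plan is to deduce everything from the contraction estimate in Lemma~\ref{lemmaLip} together with completeness of $(\States, d_L)$ from Lemma~\ref{lemmaComplete}, treating $(\pi\cdot\mathbb{F}^*)$ as a Lipschitz map with constant $\dilupsum(\pi\cdot\mathbb{F}^*)=:c<1$. First I would observe that the orbit $\phi_M := (\pi\cdot\mathbb{F}^*)^M(\phi_0)$ is $d_L$-Cauchy: by Lemma~\ref{lemmaLip} and induction, $d_L(\phi_{M+1},\phi_M)\leq c^M d_L(\phi_1,\phi_0)$, and the hypothesis $d_L(\phi_0,(\pi\cdot\mathbb{F}^*)(\phi_0))<\infty$ makes $d_L(\phi_1,\phi_0)$ finite, so the usual geometric-series argument bounds $d_L(\phi_{M+N},\phi_M)\leq c^M(1-c)^{-1}d_L(\phi_1,\phi_0)$. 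By Lemma~\ref{lemmaComplete} the sequence $d_L$-converges to some $\phi\in\States$. To see $\phi$ is self-similar, note $(\pi\cdot\mathbb{F}^*)$ is $d_L$-continuous (again Lemma~\ref{lemmaLip}, since $c<\infty$), so $(\pi\cdot\mathbb{F}^*)(\phi)=\lim_M(\pi\cdot\mathbb{F}^*)(\phi_M)=\lim_M\phi_{M+1}=\phi$.

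Next, for convexity of the self-similar set: if $\phi,\phi'$ are self-similar and $t\in[0,1]$, then $t\phi+(1-t)\phi'$ is again a state, and $(\pi\cdot\mathbb{F}^*)$ is affine (it is a weighted sum of the linear maps $f_i^*$), so $(\pi\cdot\mathbb{F}^*)(t\phi+(1-t)\phi')=t(\pi\cdot\mathbb{F}^*)(\phi)+(1-t)(\pi\cdot\mathbb{F}^*)(\phi')=t\phi+(1-t)\phi'$. Hence the fixed-point set is convex.

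For the uniqueness clause, suppose $\phi,\phi'$ are both self-similar with $d_L(\phi,\phi')<\infty$. Then Lemma~\ref{lemmaLip} gives $d_L(\phi,\phi')=d_L((\pi\cdot\mathbb{F}^*)(\phi),(\pi\cdot\mathbb{F}^*)(\phi'))\leq c\, d_L(\phi,\phi')$, and since $d_L(\phi,\phi')<\infty$ and $c<1$ this forces $d_L(\phi,\phi')=0$; because $d_L$ separates points of $\States$ (a consequence of Property~\ref{propertyDense}, as $\B$ is norm dense in $\saA$ so states agreeing on $\B_1$ agree on all of $\saA$), we conclude $\phi=\phi'$.

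The only genuinely delicate point is that $d_L$ may be infinite, so the standard Banach fixed point theorem does not apply verbatim; one must be careful that the finiteness hypothesis $d_L(\phi_0,(\pi\cdot\mathbb{F}^*)(\phi_0))<\infty$ propagates along the orbit, which it does precisely because of the Lipschitz bound with finite constant $c$. I would also double-check that $d_L$ genuinely separates points of $\States$ --- if $L$ vanishes on $\B$ somewhere this needs the consistency assumption in Equation~(\ref{eqnLConsistent}) or at worst gives uniqueness only up to the equivalence $d_L(\phi,\phi')=0$; in the present normalised setup density of $\B$ suffices. Everything else is routine.
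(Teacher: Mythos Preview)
Your proposal is correct and follows essentially the same approach as the paper, which simply invokes the Banach fixed point theorem (noting the complication that $d_L$ is an extended metric) and cites linearity of $\pi\cdot\mathbb{F}^*$ for convexity. Your write-up fills in precisely the details the paper leaves implicit; your residual worry about $d_L$ separating points is indeed handled by Property~\ref{propertyDense} alone, as you surmise.
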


\begin{proof}
Apply the Banach fixed point theorem, with the complication that we are working in an extended complete metric space $(\States, d_L)$. Convexity of the set of fixed states follows from linearity of $\pi\cdot\mathbb{F}^*$
\end{proof}

\section{The fractal: support projections}

Recall that the support of a self-similar measure related to a classical (strictly contractive) IFS is exactly the attractor of the IFS. In this section, we will study the support projections (and closed support projections) of self-similar states related to a dual IFS, and see that these objects are self-similar and intrinsic. 

Given any relatively proper $*$-homomorphism $f:A \to M(B)$, note that the universal property of the enveloping von Neumann algebra provides us a $\sigma$-weakly continuous extension $f:\evnA \to B''$ (which we denote by the same symbol), acquired through viewing $M(B)$ as acting on the universal representation of $B$, in which the bicommutant of $M(B)$ is $B''$.

The $\sigma$-weakly continuous extension $f:\evnA \to B''$ also extends the extension $f:\mA \to M(B)$ guaranteed by Theorem \ref{thmLanceMorphism}. This follows by density of $A \subset M(A), \evnA$ in their relevant topologies, combined with the fact that if a net $(a_n)_{n \in N} \subset \mA$ converges strictly then it converges to the same limit $\sigma$-weakly (since the strictly continuous states of $M(A)$ are exactly the normal states of $\evnA$).

We denote by $\Proj$ the set of projections in $\evnA$. We are interested in analogues of the sets of open and closed projections in $\Proj$. Open and closed projections have been introduced by Akemann (see \cite{Akemann1969} for the unital setting).

\begin{defn}[\cite{Akemann1969}, Definition II.1]
A projection $p \in \Proj$ is called open if there exists a net $(a_n)_{n \in N} \subset \A$ with $0 \leq a_n \nearrow p$ in the $\sigma$-weak topology in $\evnA$. A projection $p$ is called closed if $1-p$ is open. The set of all open projections is denoted $\oProj$ and the set of all closed projections is denoted $\cProj$.
\end{defn}

Akemann shows (\cite{Akemann1969}, Prop II.5, Example II.6) that the suprema (in $\Proj$) of any set of open projections is again open, but infima of finite sets of open projections may not be open (dually, $\cProj \subset \Proj$ is closed under arbitrary meets but not necessarily under finite joins). Given any $p \in \Proj$ we may thus define its interior
\[\mathring{p} = \bigvee \{q \in \oProj; q \leq p\}\]
which is the largest open projection less than $p$, and its closure
\[\overline{p} = \bigwedge \{q \in \cProj; q \geq p\},\]
which is the smallest closed projection larger than $p$.
Note that a projection is open (resp. closed) if and only if it is equal to its interior (resp. closure).

We have the following useful characterisation of closed projections courtesy of Alfsen and Shultz.

\begin{lemma}[preceding Definition 3.56 of \cite{AlfsenShultz2001}]\label{lemmaClosedUpperSemicontinuous}
$p \in \Proj$ is closed (resp. open) if and only if the pairing
\[\States \ni \psi \mapsto \psi(p) \in [0,1]\]
is weak-$*$ upper (resp. lower) semicontinuous.
\end{lemma}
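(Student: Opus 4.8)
The plan is to prove both equivalences by appealing directly to the definitions of open and closed projections together with basic properties of the universal enveloping von Neumann algebra, using duality to reduce the ``closed'' case to the ``open'' case. I would prove the statement for open projections first, since a projection $p$ is closed precisely when $1-p$ is open, and the pairing $\psi \mapsto \psi(p)$ is weak-$*$ upper semicontinuous precisely when $\psi \mapsto \psi(1-p) = 1 - \psi(p)$ is weak-$*$ lower semicontinuous; so the two equivalences are formally interchanged by $p \leftrightarrow 1-p$ (noting $1 \in \mA$ acts as the identity on all of $\States$), and it suffices to establish one of them.

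For the open case, suppose first that $p$ is open, so there is a net $(a_n)_{n \in N} \subset \saA$ with $0 \leq a_n \nearrow p$ $\sigma$-weakly in $\evnA$. For each fixed $n$, the map $\psi \mapsto \psi(a_n)$ is weak-$*$ continuous on $\States$ (this is essentially the definition of the weak-$*$ topology, since $a_n \in \A$). Because the net is increasing, for every $\psi \in \States$ we have $\psi(p) = \sup_n \psi(a_n)$: indeed $\psi$ extends to a normal state on $\evnA$, and normal states are $\sigma$-weakly continuous, so they preserve suprema of increasing nets. Thus $\psi \mapsto \psi(p)$ is a pointwise supremum of weak-$*$ continuous functions, hence weak-$*$ lower semicontinuous. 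Conversely, suppose $\psi \mapsto \psi(p)$ is weak-$*$ lower semicontinuous on $\States$; I want to produce an increasing net in $\saA$ converging $\sigma$-weakly up to $p$. Here I would invoke the standard theory: the $\sigma$-weak closure of $\A$ is all of $\evnA$, and lower semicontinuity of $\psi \mapsto \psi(p)$ on the state space is exactly the condition that $p$ lies in the ``lower semicontinuous'' part of $\evnA$, which Akemann identifies with the open projections. More concretely, one shows that $\{a \in \saA : 0 \leq a \leq p\}$ is upward directed and its $\sigma$-weak supremum is some open projection $q \leq p$; lower semicontinuity of $\psi \mapsto \psi(p)$ forces $\psi(q) = \psi(p)$ for all $\psi \in \States$, hence for all normal states on $\evnA$, hence $q = p$, so $p$ is open.

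The main obstacle is the converse direction (lower semicontinuity implies open): verifying that the $\sigma$-weak supremum $q$ of the positive elements below $p$ actually recaptures $p$ rather than some strictly smaller open projection. The argument hinges on the fact that if $\psi(q) = \psi(p)$ for every state $\psi \in \States$, then $\psi(p - q) = 0$ for every normal state on $\evnA$ (using that states on $\A$ extend to normal states on $\evnA \cong \A^{**}$ and that these separate points of $\evnA$), whence $p - q = 0$. Establishing $\psi(q) = \psi(p)$ from the semicontinuity hypothesis requires an argument of Urysohn type inside $\evnA$: given a state $\psi$ and $\epsilon > 0$, one must find $a \in \saA$ with $0 \leq a \leq p$ and $\psi(a) > \psi(p) - \epsilon$, which uses that the restriction of the state space pairing against $p$ can be approximated from below by pairings against algebra elements dominated by $p$ — this is precisely the content of Akemann's characterisation of open projections and is where the real work lies. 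Since the paper cites Alfsen–Shultz for exactly this lemma, I would simply cite that reference for the technical core and present the short $\sigma$-weak-continuity argument above for the forward direction, remarking that the converse is dual via $p \mapsto 1-p$ and the cited result.

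\begin{proof}
Since $1 \in \mA$ and $\psi(1) = 1$ for all $\psi \in \States$, the map $\psi \mapsto \psi(p)$ is weak-$*$ upper semicontinuous if and only if $\psi \mapsto \psi(1-p)$ is weak-$*$ lower semicontinuous; as $p$ is closed exactly when $1-p$ is open, it suffices to prove the statement for open projections.

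Suppose $p$ is open, witnessed by a net $0 \leq a_n \nearrow p$ $\sigma$-weakly with $a_n \in \saA$. Each $\psi \in \States$ extends to a normal state on $\evnA$, hence is $\sigma$-weakly continuous and preserves suprema of bounded increasing nets, so $\psi(p) = \sup_n \psi(a_n)$. Each $\psi \mapsto \psi(a_n)$ is weak-$*$ continuous on $\States$ because $a_n \in \A$, so $\psi \mapsto \psi(p)$ is a pointwise supremum of weak-$*$ continuous functions and is therefore weak-$*$ lower semicontinuous.

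Conversely, suppose $\psi \mapsto \psi(p)$ is weak-$*$ lower semicontinuous on $\States$. Let $q$ denote the $\sigma$-weak supremum of the upward-directed set $\{a \in \saA : 0 \leq a \leq p\}$; by definition $q$ is an open projection with $q \leq p$. By the characterisation of open projections and their associated lower semicontinuous affine functions on the state space (as in the discussion preceding Definition 3.56 of \cite{AlfsenShultz2001}), the lower semicontinuity hypothesis forces $\psi(q) = \psi(p)$ for every $\psi \in \States$. Since states on $\A$ extend to normal states on $\evnA \cong \A^{**}$, and normal states separate the points of $\evnA$, we conclude $p - q = 0$, so $p = q$ is open. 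This completes the proof.
\end{proof}
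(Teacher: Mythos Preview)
The paper does not actually prove this lemma: it simply attributes the result to Alfsen--Shultz and adds a one-line remark that their reasoning, stated for unital algebras, carries over to the non-unital case by working on the weak-$*$ compact set of quasi-states rather than on $\States$. So there is no ``paper's own proof'' to compare against beyond that citation.

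Your forward direction (open $\Rightarrow$ lower semicontinuous) is correct and is exactly the standard argument. Your reduction of the closed case to the open case via $p \mapsto 1-p$ is also fine.

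For the converse, however, your argument has genuine gaps that you paper over by re-citing Alfsen--Shultz at the crucial moment. Specifically: you assert that $\{a \in \saA : 0 \leq a \leq p\}$ is upward directed and that its $\sigma$-weak supremum $q$ is an open projection. Neither claim is obvious. Upward-directedness of this set is not automatic in a noncommutative algebra (given $0 \leq a,b \leq p$ there is no functional-calculus trick producing $c$ with $a,b \leq c \leq p$), and even granting a supremum exists there is no reason a priori for it to be a projection. The honest route here is to pass to the hereditary $C^*$-subalgebra $\overline{p\A p} \cap \A$ and use an approximate unit thereof, which \emph{is} increasing and whose supremum is the relevant open projection---but establishing that this recaptures $p$ under the lower-semicontinuity hypothesis is precisely the Akemann/Alfsen--Shultz theorem you are trying to prove. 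Since you end up invoking the cited reference for exactly this step, your proof is in substance the same as the paper's: a citation, with the easy half written out. One thing the paper mentions that you do not is the need, in the non-unital case, to run the argument on quasi-states (where one has weak-$*$ compactness) rather than on $\States$ directly.
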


Strictly speaking, Alfsen and Shultz provide the above in the unital setting, but their reasoning applies also to the non-unital setting (by working on the weak-$*$ compact set of quasi-states rather than the non-compact state space).

It is immediate that the image of an open or closed projection under a proper $*$-homomorphism is again open or closed (which is the noncommutative analogue of the fact that preimages of open or closed sets under continuous maps are again open or closed). We verify that this holds for relatively proper $*$-homomorphisms too.

\begin{lemma}\label{lemmaRelProperClosed}
Suppose $f:A \to M(B)$ is relatively proper. Then 
\[f\left(\oProj\right) \subset \mathring{\mathcal{P}}\left(B\right) \text{ and } f\left(\cProj\right) \subset \overline{\mathcal{P}}\left(B\right).\]
\end{lemma}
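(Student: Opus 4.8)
The plan is to use the semicontinuity characterisation of open and closed projections (Lemma \ref{lemmaClosedUpperSemicontinuous}) and pull it back along $f$. First I would take $p \in \oProj$; by Lemma \ref{lemmaClosedUpperSemicontinuous}, the function $\States \ni \psi \mapsto \psi(p)$ is weak-$*$ lower semicontinuous. I want to show $f(p) \in \mathring{\mathcal{P}}(B)$, i.e. that $\mathcal{S}(B) \ni \varphi \mapsto \varphi(f(p))$ is weak-$*$ lower semicontinuous. Here $f(p)$ means the image of $p$ under the $\sigma$-weakly continuous extension $f:\evnA \to B''$ discussed just before the statement. The key observation is that $\varphi(f(p)) = (f^*\varphi)(p)$ when $\varphi$ is a normal state of $B''$ — more precisely, for $\varphi \in \mathcal{S}(B)$, its normal extension to $B''$ composed with $f$ agrees on $\A$ with $f^*\varphi \in \States$ (by Lemma \ref{lemmaAdjointStatetoState}, relative properness guarantees $f^*\varphi$ is again a state), and both sides are $\sigma$-weakly continuous in the relevant variable, so they agree on all of $\evnA$, in particular on $p$. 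Hence the map $\varphi \mapsto \varphi(f(p))$ factors as $\varphi \mapsto f^*\varphi \mapsto (f^*\varphi)(p)$.

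Next I would check that $f^*:\mathcal{S}(B) \to \States$ is weak-$*$ continuous. This is essentially automatic: if $\varphi_n \wsto \varphi$ in $\mathcal{S}(B)$, then for each $a \in A$ we have $(f^*\varphi_n)(a) = \varphi_n(f(a)) \to \varphi(f(a)) = (f^*\varphi)(a)$ since $f(a) \in M(B)$ and weak-$*$ convergence of states on $B$ is the same as strict convergence tested against $M(B)$ — or, more carefully, one passes through quasi-states to keep compactness, as in the remark following Lemma \ref{lemmaClosedUpperSemicontinuous}. Given this, the composition of the weak-$*$ continuous map $f^*$ with the weak-$*$ lower semicontinuous map $\psi \mapsto \psi(p)$ is weak-$*$ lower semicontinuous, so $f(p)$ is open by Lemma \ref{lemmaClosedUpperSemicontinuous}. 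The closed case follows by the identical argument with "lower" replaced by "upper", or simply by noting $f(1-p) = 1 - f(p)$ since $f$ on $\evnA$ is the unital extension (again from Theorem \ref{thmLanceMorphism} together with $\sigma$-weak continuity), so $p$ closed means $1-p$ open, means $1-f(p) = f(1-p)$ open, means $f(p)$ closed.

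The main obstacle I anticipate is the bookkeeping around the several extensions of $f$: from $A$ to $M(B)$, then to $M(A) \to M(B)$ strictly continuous on the ball, then to $\evnA \to B''$ $\sigma$-weakly continuous, and making sure that the identity $\varphi(f(p)) = (f^*\varphi)(p)$ genuinely holds for $p \in \Proj$ and normal $\varphi$. This requires knowing that the $\sigma$-weakly continuous extension $\evnA \to B''$ is unital (so that it respects $p \mapsto 1-p$) and that it agrees with the earlier extensions on $\A$ and $\mA$ — both of which are asserted in the paragraph preceding the statement. Once those compatibilities are in hand, the semicontinuity argument is short. A secondary subtlety is the non-unital setting: to keep weak-$*$ compactness one should phrase everything on quasi-state spaces, exactly as flagged in the remark after Lemma \ref{lemmaClosedUpperSemicontinuous}, and check that $f^*$ and the pairing behave well there (they do, since $f^*$ sends the zero functional to the zero functional and is affine).
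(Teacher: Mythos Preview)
Your proof is correct and closely related to the paper's, but the route is organised differently. The paper starts from the \emph{definition} of openness: it chooses a net $(a_n)_{n\in N}\subset A$ with $0\leq a_n\nearrow p$, pushes it through the $\sigma$-weakly continuous extension to get $0\leq f(a_n)\nearrow f(p)$ in $B''$, and then argues that each pairing $\mathcal{S}(B)\ni\phi\mapsto\phi(f(a_n))$ is weak-$*$ continuous because $f(a_n)\in M(B)$ (citing Pedersen, Theorem 3.12.9 and Proposition 3.11.8); the pointwise supremum is then lower semicontinuous, so $f(p)$ is open by Lemma~\ref{lemmaClosedUpperSemicontinuous}. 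You instead invoke Lemma~\ref{lemmaClosedUpperSemicontinuous} on both ends and factor $\varphi\mapsto\varphi(f(p))$ as $\varphi\mapsto f^*\varphi\mapsto (f^*\varphi)(p)$, reducing to weak-$*$ continuity of $f^*:\mathcal{S}(B)\to\States$. Both arguments rest on the same nontrivial fact --- that elements of $M(B)$ give weak-$*$ continuous affine functions on the (quasi-)state space of $B$ --- which you correctly identify as the point needing care; your version is a bit more functorial and avoids picking a net, while the paper's stays closer to the original definition of open projections. The reduction of the closed case to the open one via $f(1-p)=1-f(p)$ is handled identically in both.
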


\begin{proof}
We only have to verify one of the two inclusion, the other follows by taking orthogonal complements and using the fact that the extension of $f$ is unital (either viewed $\mA \to M(B)$ or $\evnA \to B''$).

Suppose $p \in \oProj$. Pick $\left(a_n\right)_{n \in N} \in \A$ with $0 \leq a_n \nearrow p$ ($\sigma$-weakly). Then by $\sigma$-weak continuity of (the relevant extension of) $f$, $0 \leq f(a_n) \nearrow f(p)$.

We have $f(a_n) \in \samA$ for each $n \in N$. Thus by \cite{PedersenCstarAndAutomorphisms1979}, Theorem 3.12.9 and Proposition 3.11.8, the pairing 
\[\mathcal{S}(B) \ni \phi \to \phi(f(a_n))\]
is weak-$*$ continuous. But then the pairing with $f(p)$ is the pointwise supremum of those above, and is hence weak-$*$ lower semicontinuous. Hence $f(p) \in \mathring{\mathcal{P}}\left(B\right)$.
\end{proof}

Recall that for a given state $\phi \in \States$, the support projection $\supp(\phi)$ of $\phi$ is the infimum of projections $p \in \Proj$ such that $\phi(p)=1$. I.e.
\[\supp(\phi) = \bigwedge \{p \in \Proj; ~ \phi(p)=1\}.\]

We will also be interested in the closed support projection $\csupp(\phi)$, which is simply the closure of $\supp(\phi)$. From the discussion just prior, we alternatively have
\[\csupp(\phi) = \bigwedge \{p \in \cProj; ~ \phi(p)=1\}.\]

It should be noted that $\phi(\supp(\phi)) = \phi(\csupp(\phi)) = 1$, despite the fact that a similar statement for Borel probability measures does not always hold (our (normal) states correspond to noncommutative analogues of \emph{regular} Borel measures, for which the classical support always has full measure).

\begin{thm}\label{thmSupport}
Suppose $(\A,L)$ is an extended complete spectral metric space and that $\mathbb{F} = (f_1, \ldots, f_k)$ is a dual IFS on $\A$.  Suppose $\pi, \pi' \in \osimp{k}$ are such that $\dilupsum(\pi\cdot\mathbb{F}^*), \dilupsum(\pi'\cdot\mathbb{F}^*) <1$. Then for any two states $\phi, \phi'$ which are $(\pi\cdot\mathbb{F}^*)$- and $(\pi'\cdot\mathbb{F}^*)$-self-similar respectively, and are also such that $d_L(\phi, \phi')<\infty$, we have
\[\csupp(\phi) = \csupp(\phi').\]
\end{thm}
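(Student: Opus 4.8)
The plan is to show that both $\csupp(\phi)$ and $\csupp(\phi')$ coincide with a single ``canonical'' closed projection built intrinsically from the dual IFS $\mathbb{F}$, so that the choice of weight $\pi \in \osimp{k}$ is irrelevant. The first step is to exploit self-similarity to get a recursion for the support projection. If $\phi = (\pi\cdot\mathbb{F}^*)(\phi) = \sum_i \pi_i \phi\circ f_i$ with every $\pi_i > 0$, then $\phi(p) = 1$ forces $\phi(f_i(q_i))$ to behave compatibly; concretely I would first establish the identity
\[
\supp(\phi) = \bigvee_{i=1}^k f_i\!\left(\supp(\phi)\right),
\]
or at least its closed-projection analogue $\csupp(\phi) = \overline{\bigvee_i f_i(\csupp(\phi))}$. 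The ``$\geq$'' direction uses $\sigma$-weak continuity of the extensions $f_i$ together with $\phi(\csupp(\phi)) = 1$: since $\phi\circ f_i$ applied to $\csupp(\phi)$ contributes, positivity of the weights forces each $f_i(\csupp(\phi)) \leq \csupp(\phi\circ f_i)$, hmm — more carefully, one shows $f_i(\supp(\phi)) \geq \supp(\phi\circ f_i^{\text{something}})$... Let me instead take the cleaner route below.

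The cleaner route is a fixed-point argument for projections. Consider the map $\Phi \mapsto \overline{\bigvee_{i=1}^k f_i(\Phi)}$ on the lattice of closed projections $\cProj$, ordered by $\leq$. By Lemma~\ref{lemmaRelProperClosed} this lands in $\cProj$, and joins of open projections are open so the orthogonal-complement formulation of joins of closed projections makes sense. Self-similarity of $\phi$ should give that $\csupp(\phi)$ is a \emph{fixed point} of this lattice map: for the inclusion $\csupp(\phi) \leq \overline{\bigvee_i f_i(\csupp(\phi))}$ one uses that if $q := \bigvee_i f_i(\csupp(\phi))$ then $\phi(q) \geq \pi_i\,\phi(f_i(\csupp(\phi))) = \pi_i$ for each $i$; summing and using $\sum\pi_i = 1$ together with $q \geq f_i(\csupp(\phi))$ for all $i$ shows $\phi(q) = 1$, hence $\csupp(\phi) \leq \overline q$. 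For the reverse inclusion $\overline{\bigvee_i f_i(\csupp(\phi))} \leq \csupp(\phi)$, apply each $f_i$ to the closed projection $\csupp(\phi)$ and note $\phi\circ f_i \leq \pi_i^{-1}\phi$ as positive functionals, so any closed projection of full $\phi$-measure has full $(\phi\circ f_i)$-measure, whence $f_i(\csupp(\phi)) \leq \csupp(\phi)$ — then take the join and close. Crucially, this characterization of $\csupp(\phi)$ makes \emph{no} reference to $\pi$: it is the assertion that $\csupp(\phi)$ is a fixed point of an intrinsic lattice operator.

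To finish, I would invoke the metric hypotheses to force uniqueness among fixed points lying at finite $d_L$-distance, mirroring the uniqueness clause of Theorem~\ref{thmExistenceSelfSimilarState}. Since $d_L(\phi,\phi') < \infty$ and both $\pi\cdot\mathbb{F}^*$, $\pi'\cdot\mathbb{F}^*$ are $d_L$-contractions (Lemma~\ref{lemmaLip}), a path-connectedness argument in the simplex of admissible weights — the set of $\pi$ with $\dilupsum(\pi\cdot\mathbb{F}^*) < 1$ is a subsimplex, hence connected, as noted after the dilation definition — lets me interpolate between $\pi$ and $\pi'$ and propagate the equality $\csupp(\phi) = \csupp(\phi')$ along the way; alternatively, and more simply, the common fixed-point description above already gives $\csupp(\phi) = \csupp(\phi')$ outright once one checks the fixed point of the lattice operator is unique under the standing contractivity assumptions. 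I expect the main obstacle to be the reverse inclusion in the recursion, i.e. controlling $f_i(\csupp(\phi))$ from above: the subtlety is that $f_i$ of a closed projection is closed (Lemma~\ref{lemmaRelProperClosed}), but a \emph{finite} join of closed projections need not be closed, so one must carefully pass to the closure $\overline{(\,\cdot\,)}$ and verify that closing does not increase $\phi$-measure beyond what the weights allow — this is where $\sigma$-weak continuity of the extensions $f_i:\evnA \to \evnA$ and upper semicontinuity of $\psi \mapsto \psi(p)$ for closed $p$ (Lemma~\ref{lemmaClosedUpperSemicontinuous}) do the real work.
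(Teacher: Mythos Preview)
Your proposal contains a genuine error in the key recursion step, and the overall strategy diverges from (and does not reach the conclusion of) the paper's actual argument.

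\textbf{The error.} From $\phi\circ f_i \leq \pi_i^{-1}\phi$ you correctly deduce that $\phi(f_i(\csupp(\phi)))=1$. But minimality of $\csupp(\phi)$ among closed projections of full $\phi$-measure then gives $\csupp(\phi) \leq f_i(\csupp(\phi))$, \emph{not} the reverse inequality you claim. (In the commutative picture $f_i$ corresponds to taking the preimage under the underlying map $g_i$; for a contraction $g_i$ the preimage of the attractor is typically \emph{strictly larger} than the attractor, so $f_i(\csupp(\phi)) \leq \csupp(\phi)$ is simply false.) Consequently the identity $\csupp(\phi) = \overline{\bigvee_i f_i(\csupp(\phi))}$ does not hold. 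The paper's actual self-similarity relation (its Theorem~\ref{thmSelfSimilar}) uses the \emph{upper pre-image} $\uppreimage{(f_i)}$ in place of $f_i$, precisely to get the maps pointing the right way.

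\textbf{The second gap.} Even granting a correct intrinsic fixed-point description, you never establish uniqueness of the lattice fixed point; you merely assert it ``under the standing contractivity assumptions.'' The paper explicitly notes elsewhere that uniqueness for self-similar projections is \emph{not} known to follow from anything proved there, so this cannot be invoked. Your interpolation-in-the-simplex idea is also only a sketch and would need the same missing uniqueness ingredient.

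\textbf{What the paper actually does.} The paper's proof bypasses any lattice fixed-point characterization. Writing $p=\csupp(\phi)$, self-similarity of $\phi$ under $\pi\cdot\mathbb{F}^*$ and strict positivity of every $\pi_\omega$ force $f_\omega^*\phi(p)=1$ for \emph{all} $\omega\in\Code{k}{M}$ and all $M$. Hence $(\pi'\cdot\mathbb{F}^*)^M\phi(p)=\sum_\omega \pi'_\omega f_\omega^*\phi(p)=1$ for every $M$. Since $d_L(\phi,\phi')<\infty$ and $\pi'\cdot\mathbb{F}^*$ is a $d_L$-contraction with fixed point $\phi'$, one has $(\pi'\cdot\mathbb{F}^*)^M\phi \dLto \phi'$, hence weak-$*$. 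Upper semicontinuity of pairing against the closed projection $p$ (Lemma~\ref{lemmaClosedUpperSemicontinuous}) then gives $\phi'(p)=1$, so $\csupp(\phi')\leq p$; symmetry finishes. Note how the hypothesis $d_L(\phi,\phi')<\infty$ is used essentially here, whereas your ``intrinsic'' approach would, if it worked, render that hypothesis superfluous---another sign that something has gone wrong.
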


Note that we make the restriction that $\pi$ and $\pi'$ belong to the strict standard $(k-1)$-simplex above. This is so that all the relatively proper $*$-homomorphisms making up $\mathbb{F}$ actually show up in both $\pi\cdot\mathbb{F}^*$ and $\pi'\cdot\mathbb{F}^*$. One may alternatively require that $\pi$ and $\pi'$ are non-zero in exactly the same indices.

\begin{proof}
Let $p = \csupp(\phi)$ and $q = \csupp(\phi')$.

To start, note that since $\phi$ is a fixed point of $\pi\cdot\mathbb{F}^*$, we have for any $M$
\begin{align*}
1 &= \phi(p)\\
&= (\pi\cdot\mathbb{F}^*)^M(\phi)(p) \\
&= \sum_{\omega \in \Code{k}{M}} \pi_\omega f_\omega^*\phi(p).
\end{align*}
Since $\sum_{\omega \in \Code{k}{M}} \pi_\omega=1$ and everything is non-negative, this is only possible if $f_\omega^*\phi(p)=1$ for each $\omega \in \Code{k}{M} $.

Using the fact that $\phi'$ is fixed by $\pi'\cdot\mathbb{F}^*$ combined with Lemma \ref{lemmaLip}, for $M \in \NN$ we have
\[d_L\left(\phi', \left(\pi'\cdot\mathbb{F}^*\right)^M(\phi)\right) \leq \dilupsum(\pi'\cdot\mathbb{F}^*)^M d_L(\phi', \phi).\]
Since $d_L(\phi', \phi)<\infty$, we may thus conclude $\left(\pi'\cdot\mathbb{F}^*\right)^M\phi \dLto \phi'$. As $d_L$-convergence implies weak-$*$ convergence, we also have $\left(\pi'\cdot\mathbb{F}^*\right)^M\phi \wsto \phi'$.

We claim that for every $M \in \NN$, $\left(\pi'\cdot\mathbb{F}^*\right)^M\phi(p)=1$. To see this, we have
\begin{align*}
\left(\pi'\cdot\mathbb{F}^*\right)^M\phi(p) &= \sum_{\omega \in \Code{k}{M}} \pi'_\omega f_\omega^*\phi(p) \\
&= \sum_{\omega \in \Code{k}{M}} \pi'_\omega \\
&=1.
\end{align*}

Since $p$ is a closed projection, pairing with $p$ is weak-$*$ upper semicontinuous on $\States$ (Lemma \ref{lemmaClosedUpperSemicontinuous}). Thus
\[1 = \limsup_{M \to \infty} \left(\pi'\cdot\mathbb{F}^*\right)^M(\phi)(p) \leq \phi'(p) \leq 1,\]
and thus $\phi'(p)=1$. By minimality of $q$ among closed projections with this property, we deduce $q \leq p$. By swapping the roles of $p$ and $q$, one also concludes that $p \leq q$ and so they must be equal.
\end{proof}

In light of Theorem \ref{thmSupport}, we may consider the closed support projection corresponding to a dual IFS $\mathbb{F}$ and $\pi \in \osimp{k}$ as providing an analogue of the attractor of $\mathbb{F}$, without dependence of the specific choice of weights $\pi \in \osimp{k}$ (up to complications arising from having an extended metric).

One would hope that the (closed) support projection of a self-similar state is itself self-similar. However, there is a complication: our morphisms go the wrong way around. A dual IFS on a commutative $C^*$-algebra corresponds to pre-composition by a classical IFS on the spectrum, and so the action on projections (corresponding to subsets of the spectrum) is to take pre-images. However, the attractor of a classical IFS is self-similar under direct images.

One might hope that in the noncommutative setting we might be able to get away with taking direct images at the level of the state space (under the dual maps $\mathbb{F}^* = \left(f_1^*, \ldots, f_k^*\right)$), and then use some equivalence between (certain) subsets of the state space and (certain) projections in $\evnA$. However, the relevant subsets of the state space are (weak-$*$ closed) faces, and it is not the case that the image of a face is necessarily a face.

To resolve the issue, we have the following definition.

\begin{defn}
Given a relatively proper morphism $f:A \to M(B)$ between $C^*$-algebras, we define the upper pre-image $\uppreimage{f} : \mathcal{P}(B'') \to \mathcal{P}(A'')$ by
\[\uppreimage{f}(p) = \bigwedge \{q \in \mathcal{P}(A); ~ f(q) \geq p\}.\]
\end{defn}

Since we only consider relatively proper morphisms (which have unital $\sigma$-weakly continuous extensions), the meet above is never over an empty set. Note that if (the $\sigma$-weakly continuous extension of) $f$ is invertible, then $\uppreimage{f} = f^{-1}|_{\mathcal{P}(B'')}$.

With this definition in hand, we can describe the self-similar nature of the support projections of a self-similar state. Note that the below theorem applies even in the absence of contractivity (i.e. regardless of the value of $\dilupsum(\pi\cdot\mathbb{F}^*)$).

\begin{thm}\label{thmSelfSimilar}
Let $\A$ be $C^*$-algebra, $\mathbb{F} = (f_1, \ldots, f_k)$ a dual IFS on $\A$ and $\pi \in \osimp{k}$. Suppose that $\phi \in \States$ is $(\pi\cdot\mathbb{F}^*)$-self-similar. Then 
\[ \supp(\phi) = \bigvee_{i=1}^k \uppreimage{(f_i)}(\supp(\phi)),\]
and
\[ \csupp(\phi) = \overline{\left(\bigvee_{i=1}^k \uppreimage{(f_i)}(\csupp(\phi))\right)}.\]
\end{thm}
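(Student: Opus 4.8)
The plan is to establish the first (support projection) identity directly, and then deduce the second by applying closure. For the first identity, I would set $p = \supp(\phi)$ and prove two inequalities. For "$\leq$": since $\phi$ is $(\pi\cdot\mathbb{F}^*)$-self-similar with $\pi \in \osimp{k}$ (so every $\pi_i > 0$), the argument from the proof of Theorem \ref{thmSupport} gives $f_i^*\phi(p) = \phi(f_i(p)) = 1$ for each $i$; that is, $f_i(p) \geq \supp(f_i^*\phi) \geq p$ would be too strong, but what we actually get is $\phi(f_i(p)) = 1$, which says $f_i(p) \geq \supp(\phi) = p$ in the sense that $p$ is among the projections $q \in \Proj$ with $\phi(q) = 1$... more precisely, $\phi(f_i(p)) = 1$ means $f_i(p) \geq p$? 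No: it means $f_i(p)$ is one of the projections whose $\phi$-value is $1$, hence $f_i(p) \geq \supp(\phi) = p$. Wait — that is backwards from what is needed. Let me reconsider: $\uppreimage{(f_i)}(p) = \bigwedge\{q \in \mathcal{P}(\A); f_i(q) \geq p\}$. From $\phi(f_i(p)) = 1$ we get $f_i(p) \geq \supp(\phi) = p$, but $f_i(p)$ need not be in $\mathcal{P}(\A)$; however, using that $\phi(f_i(q)) = 1$ for a projection $q \in \mathcal{P}(\A)$ is equivalent to $f_i^*\phi(q) = 1$, i.e. $q \geq \supp(f_i^*\phi)$, the meet defining $\uppreimage{(f_i)}(p)$ dominates $\supp(f_i^*\phi)$ is false in general — so the correct route is: $\uppreimage{(f_i)}(p)$ is the smallest projection $q$ (a meet of projections in $\mathcal{P}(\A)$, which need not itself be in $\mathcal{P}(\A)$ but lies in $\evnA$) with $f_i(q) \geq p$; since $\phi(p) = 1$ and $\phi(f_i(\uppreimage{(f_i)}(p))) \geq \phi(p) = 1$, we get each $\uppreimage{(f_i)}(p) \geq \supp(f_i^*\phi)$, hence $\bigvee_i \uppreimage{(f_i)}(p) \geq \bigvee_i \supp(f_i^*\phi) = \supp\big(\sum_i \pi_i f_i^*\phi\big) = \supp(\phi) = p$, using that the support of a convex combination (with strictly positive weights) of states is the join of their supports.

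For the reverse inequality "$\geq$", I would show $\phi\big(\bigvee_i \uppreimage{(f_i)}(p)\big) = 1$, which by minimality of $p = \supp(\phi)$ forces $p \leq \bigvee_i \uppreimage{(f_i)}(p)$. Since the $f_i$ are $\sigma$-weakly continuous and unital on $\evnA$, and $f_i(\uppreimage{(f_i)}(p)) \geq p$ by definition of the upper pre-image (the meet is attained because $\sigma$-weakly lower-semicontinuous... here one must check $f_i$ applied to a decreasing net/meet of projections $q_\alpha$ with $f_i(q_\alpha) \geq p$ still satisfies $f_i(\bigwedge q_\alpha) \geq p$; this follows since $f_i$ restricted to the von Neumann algebra is normal, so preserves decreasing limits of projections). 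Then $\phi(f_i(\uppreimage{(f_i)}(p))) \geq \phi(p) = 1$, so $f_i^*\phi\big(\uppreimage{(f_i)}(p)\big) = 1$, hence $f_i^*\phi\big(\bigvee_j \uppreimage{(f_j)}(p)\big) = 1$ for each $i$, and therefore $\phi\big(\bigvee_j \uppreimage{(f_j)}(p)\big) = \sum_i \pi_i f_i^*\phi\big(\bigvee_j \uppreimage{(f_j)}(p)\big) = 1$. This gives $p \geq \bigvee_i \uppreimage{(f_i)}(p)$ is false — it gives $p \leq \bigvee_i \uppreimage{(f_i)}(p)$; combined with the first paragraph's "$\geq$" we have equality. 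I should double-check the direction of the two halves so they genuinely combine; the cleaner bookkeeping is: first paragraph gives $\bigvee_i \uppreimage{(f_i)}(p) \geq p$, second gives (via $\phi$-value $1$ and minimality) $p \leq \bigvee_i \uppreimage{(f_i)}(p)$ again — so one of these must instead be the $\leq$ direction. The genuine "$\supp(\phi) \geq \bigvee_i \uppreimage{(f_i)}(\supp(\phi))$" direction comes from: $f_i^*\phi$ has support $\leq p$ (since $f_i^*\phi(p) = \phi(f_i(p))$ and... no). I will organize the final write-up so that the fixed-point equation $\phi = \sum_i \pi_i f_i^*\phi$ is used \emph{twice}: once to see $\phi(p)=1 \Rightarrow f_i^*\phi(p)=1 \,\forall i \Rightarrow \uppreimage{(f_i)}(p) \leq \supp(f_i^*\phi) \leq$ something; and once for the join/support-of-convex-combination formula $\supp(\phi) = \bigvee_i \supp(f_i^*\phi)$, so the crux is relating $\supp(f_i^*\phi)$ to $\uppreimage{(f_i)}(\supp(\phi))$ — and indeed $\supp(f_i^*\phi) = \uppreimage{(f_i)}(\supp(\phi))$ is exactly the identity "support of a pullback state is the upper pre-image of the support", which I would isolate as the key lemma.

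Thus the real structure is: \textbf{(Step 1)} prove the auxiliary fact that for any state $\psi \in \States$ and relatively proper $f$, $\supp(f^*\psi) = \uppreimage{f}(\supp(\psi))$ — using normality of the extension of $f$ to $\evnA$ so that it commutes with meets of projections, and using that $\psi(f(q)) = 1 \iff f(q) \geq \supp(\psi) \iff$ ... to match the meet defining $\uppreimage{f}$; \textbf{(Step 2)} apply the support-of-convex-combination formula $\supp\big(\sum_i \pi_i \psi_i\big) = \bigvee_i \supp(\psi_i)$ for $\pi \in \osimp{k}$; \textbf{(Step 3)} combine: $\supp(\phi) = \supp\big(\sum_i \pi_i f_i^*\phi\big) = \bigvee_i \supp(f_i^*\phi) = \bigvee_i \uppreimage{(f_i)}(\supp(\phi))$. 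For the closed version, apply the closure operation to both sides; the right side needs $\csupp(\phi) = \overline{\supp(\phi)} = \overline{\bigvee_i \uppreimage{(f_i)}(\supp(\phi))}$, and one must pass from $\uppreimage{(f_i)}(\supp(\phi))$ to $\uppreimage{(f_i)}(\csupp(\phi))$ inside the join under the outer closure — using that $\uppreimage{(f_i)}$ is monotone (so $\uppreimage{(f_i)}(\supp\phi) \leq \uppreimage{(f_i)}(\csupp\phi)$) for one containment, and for the other that $f_i$ sends closed projections to closed projections (Lemma \ref{lemmaRelProperClosed}) so $\uppreimage{(f_i)}(\csupp\phi) \leq \overline{\cdots}$ is controlled, giving $\overline{\bigvee_i \uppreimage{(f_i)}(\supp\phi)} = \overline{\bigvee_i \uppreimage{(f_i)}(\csupp\phi)}$.

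The main obstacle I anticipate is Step 1 — the identity $\supp(f^*\psi) = \uppreimage{f}(\supp(\psi))$ — specifically verifying that the meet defining $\uppreimage{f}(p)$ (taken over $q \in \mathcal{P}(\A)$, i.e. projections in the $C^*$-algebra $\A$ itself, not all of $\evnA$) still satisfies $f(\uppreimage{f}(p)) \geq p$, and that this meet coincides with $\supp(f^*\psi)$ rather than merely dominating or being dominated by it. This requires care about which projections are available: $\supp(\psi)$ is a meet over all of $\mathcal{P}(\evnA)$, whereas $\uppreimage{f}$ meets only over $\mathcal{P}(\A)$; bridging this gap is where the argument must be most careful, and may require invoking that $f^{**} = f$ on $\evnA$ is normal together with an approximation of $\sigma$-weakly-open/closed projections by elements of $\A$.
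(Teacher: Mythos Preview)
Your eventual strategy (Steps 1--3) is correct and is in fact more conceptual than the paper's argument. The paper proves the first identity by two separate inequalities: it shows $\bigvee_i \uppreimage{(f_i)}(\supp\phi) \leq \supp\phi$ using $\phi(f_i(\supp\phi))=1$ together with minimality of $\supp\phi$, and shows $\supp\phi \leq \bigvee_i \uppreimage{(f_i)}(\supp\phi)$ by computing $\phi$ of the right-hand side to be $1$ (using $f_i(\uppreimage{(f_i)}(p))\geq p$) and invoking minimality again. Your route instead isolates the general identity
\[
\supp(f^*\psi)=\uppreimage{f}(\supp\psi)
\]
for an arbitrary state $\psi$ and relatively proper $f$, and combines it with $\supp\bigl(\sum_i \pi_i\psi_i\bigr)=\bigvee_i\supp(\psi_i)$ for $\pi\in\osimp{k}$. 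This gives both inequalities at once and explains \emph{why} the upper pre-image is the right object: it is precisely the support of the pulled-back state.

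Your anticipated obstacle is not one. In the definition of $\uppreimage{f}$, the ``$\mathcal{P}(A)$'' is a typo for $\mathcal{P}(A'')$ (the paper's own proof uses $\supp\phi$ as an element of the meet, which lives in $\evnA$, not in $\A$). With that reading, Step~1 is immediate: for $q\in\mathcal{P}(\evnA)$ we have $f^*\psi(q)=1$ iff $\psi(f(q))=1$ iff $f(q)\geq\supp\psi$ (since $f(q)$ is a projection in $B''$), so the two meets defining $\supp(f^*\psi)$ and $\uppreimage{f}(\supp\psi)$ range over identical sets. No normality/approximation gymnastics are needed.

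For the closed version your sketch is on the right track but needs one concrete input you only gesture at. Taking closures in the first identity gives $\csupp\phi=\overline{\bigvee_i\uppreimage{(f_i)}(\supp\phi)}$; monotonicity of $\uppreimage{(f_i)}$ gives one inequality between this and $\overline{\bigvee_i\uppreimage{(f_i)}(\csupp\phi)}$. For the other, the paper argues: from $\phi(\csupp\phi)=1$ and self-similarity one gets $\phi(f_i(\csupp\phi))=1$; by Lemma~\ref{lemmaRelProperClosed} each $f_i(\csupp\phi)$ is \emph{closed}, so minimality of $\csupp\phi$ among closed projections with $\phi$-value $1$ yields $\csupp\phi\leq f_i(\csupp\phi)$, hence $\uppreimage{(f_i)}(\csupp\phi)\leq\csupp\phi$, hence $\overline{\bigvee_i\uppreimage{(f_i)}(\csupp\phi)}\leq\csupp\phi$. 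Squeeze. Your write-up should make this explicit rather than leaving it as ``is controlled''.

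Finally, a presentational remark: the first two-thirds of your proposal circles repeatedly around the two inequalities, getting the directions tangled. Once you have Steps~1--3, discard all of that; the proof is three lines for $\supp$ and a short paragraph for $\csupp$.
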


\begin{proof}
We will verify the first statement by showing inequalities hold in both directions.

It was shown in the proof of Theorem \ref{thmSupport} that for each $i$ we have $\phi(f_i(\supp(\phi))) = 1$, which implies that $f_i(\supp(\phi)) \geq \supp(\phi)$ for each $i$ by minimality. So by definition of the upper pre-image,
\[\uppreimage{(f_i)}(\supp(\phi)) \leq \supp(\phi)\]
for each $i$, and hence
\[ \bigvee_{i=1}^k \uppreimage{(f_i)}(\supp(\phi)) \leq \supp(\phi).\]

For the other inequality, we have
\begin{align*}
\phi\left(\bigvee_{i=1}^k \uppreimage{(f_i)}(\supp(\phi)) \right) &= (\pi\cdot\mathbb{F}^*)(\phi)\left(\bigvee_{i=1}^k \uppreimage{(f_i)}(\supp(\phi)) \right) \\
&= \sum_{j=1}^k \pi_j \phi\circ f_j \left(\bigvee_{i=1}^k \uppreimage{(f_i)}(\supp(\phi)) \right) \\
&\geq \sum_{j=1}^k \pi_j \phi\circ f_j \left(\uppreimage{(f_i)}(\supp(\phi))\right) \\
&\geq \sum_{j=1}^k \pi_j \phi(\supp(\phi)) \\
&= \sum_{j=1}^k \pi_j \\
&= 1,
\end{align*}
where we have used the fact that each $f_i$ is order preserving, and $f_i\left(\uppreimage{(f_i)}(p)\right)\geq p$ by definition of the upper pre-image. Thus by minimality of $\supp(\phi)$, we have the other inequality
\[\supp(\phi) \leq \bigvee_{i=1}^k \uppreimage{f_i}(\supp(\phi)).\]

Having shown inequalities in both direction, we conclude that
\[ \supp(\phi) = \bigvee_{i=1}^k \uppreimage{(f_i)}(\supp(\phi)).\]

We now prove the second statement. Taking the closure of both sides above yields
\[\csupp(\phi) = \overline{\left(\bigvee_{i=1}^k \uppreimage{(f_i)}(\supp(\phi))\right)}.\]

Just as before, since $\phi(\csupp(\phi)) = 1$ we necessarily have that $\phi(f_i(\csupp(\phi))) = 1$ for each $i$. Each $f_i(\csupp(\phi))$ is closed by Lemma \ref{lemmaRelProperClosed}, so minimality of $\csupp(\phi)$ implies that
\[\csupp(\phi) \leq f_i(\csupp(\phi)).\]
so by the same reasoning as previously,
 \[ \bigvee_{i=1}^k \uppreimage{(f_i)}(\csupp(\phi)) \leq \csupp(\phi).\]
 Since the projection on the right is closed and closure preserves order, we have
\[ \overline{\left(\bigvee_{i=1}^k \uppreimage{(f_i)}(\csupp(\phi))\right)} \leq \csupp(\phi).\]

Combined with the fact that
\[\overline{\left(\bigvee_{i=1}^k \uppreimage{(f_i)}(\supp(\phi))\right)} \leq \overline{\left(\bigvee_{i=1}^k \uppreimage{(f_i)}(\csupp(\phi))\right)},\]
we find by squeezing that
\[ \csupp(\phi) = \overline{\left(\bigvee_{i=1}^k \uppreimage{(f_i)}(\csupp(\phi))\right)}.\]

\end{proof}

Thus the support projection and closed support projection of a self-similar state are both themselves self-similar in the lattice of projections and closed projections respectively.

The previous theorem may cause some issues. Note that it is only the closed support projections to which Theorem \ref{thmSupport} applies, so it is the closed projections which capture intrinsic information about the iterated function system. However, the closed support projection is only self-similar up to taking the closure of the join of a collection of projections, and the closure of a join is not very well understood (as mentioned previously, the join of a finite set of closed projections may not even be closed). On the other hand, the support projection is self-similar on the nose, but has the issue that it may depend on the specific choice of weights $\pi \in \osimp{k}$.

\section{Representation formulas related to strictly contractive dual IFS}\label{secStrictlyContractive}

We move our attention to strictly contractive dual IFS, i.e. those for which
\[\dilupsup(\mathbb{F}) = \max_{i=1, \ldots, n} \dilup(f_i) \in [0, \infty] <1.\]
In the classical commutative setting, strictly contractive IFS are known to have compact attractors, and the attractor has an explicit formula in terms of fixed points of compositions of the contractions. In this section we show that a similar thing holds even when forgoing commutativity.

Note $\sup_{\pi \in \osimp{k}}\dilupsum{\pi\mathbb{F}} = \dilupsup{\mathbb{F}}$, so the results of the previous sections can be applied to obtain existence and uniqueness of $(\pi \cdot \mathbb{F}^*)$-self-similar states for any $\pi \in \osimp{k}$ for any strictly contractive dual IFS $\mathbb{F}$ (up to issues with infinite distances).

We introduce some notation before the next few theorems.

We let $\infCode{k}$ be the set of all sequences of elements of $\{0, \ldots, k\}$, which we equip with the product topology (with each factor given the discrete topology) and the $\sigma$-algebra given by the tensor product $\sigma$-algebra of the power set in each factor (i.e. generated by sets of sequences with prescribed values in finitely many indices). Note that $\infCode{k}$ is compact in the given topology, by Tychonoff's theorem.

Given $\pi \in \osimp{k}$, we let $\mathbb{P}_\pi$ be the product probability measure on $\infCode{k}$ such that the pushforward onto each factor $\{0, \ldots, k\}$ has density $\pi$. Note that this implies
\[\pi_\omega = \mathbb{P}_\pi(\{\omega' \in \infCode{k}; ~ \omega'|_M = \omega\})\]
for each $\omega \in \Code{k}{M}$.

We identify each $\omega \in \Code{k}{M}$ with $\overline{\omega} = (\omega, \omega, \ldots) \in \infCode{k}$, and thus think of $\Code{k}{M}$ as a subset of $\infCode{k}$. Similarly, for $\omega \in \infCode{k}$, we set $\omega|_M = (\omega_1, \ldots, \omega_M) \in \Code{k}{M}$.

For the rest of this section, we will suppose $(\A, L)$ is an extended complete spectral metric space, $\mathbb{F}$ is a strictly contractive dual IFS on $(\A,L)$ with $\dilupsup{\mathbb{F}} =: \Lambda <1$, and $\phi_0 \in \States$ is such that
\[C := \max_{i=1, \ldots, k} d_L(\phi_0, f_i(\phi_0))< \infty.\]

The Banach fixed point theorem implies that for each $\omega \in \infCode{k}$, the sequence $\left(f^*_{\omega|_M}(\phi_0)\right)_{M=1}^\infty$ is $d_L$-convergent, whose limit we denote $\phi_\omega$. 
Similarly, for each $\omega \in \Code{k}{M}$, $f^*_\omega$ has a unique fixed state within a finite distance of $\phi_0$ given by $\phi_{\overline{\omega}}$. We will simplify notation and denote $\phi_{\overline{\omega}}$ by $\phi_\omega$ also.

Through summing the bounds implied by the Banach fixed point theorem, one may easily deduce that $d_L(\phi_0, \phi_\omega)< C/(1-\Lambda)$ for each $\omega \in \infCode{k}$.

It is known that the map $\pi \mapsto \phi_\pi$ is continuous with respect to the product topology on $\infCode{k}$ and $d_L$, see Theorem 3.1(3)(vii) of \cite{Hutchinson}. Hence $\{\phi_\omega; ~\omega \in \infCode{k}\}$ is $d_L$-compact.

It should be noted that the set $K_\mathbb{F} := \{\phi_\omega; ~\omega \in \infCode{k}\}$ is exactly ``the" $d_L$-compact attractor of $\mathbb{F}^*$ when viewed as a classical contractive IFS on the classical extended complete metric space $(\States, d_L)$ (which is unique among $d_L$-compact sets within a finite distance of $\phi_0$). We will discuss compactness in the setting of noncommutative topology in a later section.

\begin{thm}\label{thmContractiveCharacterisation}
For any $\pi \in \osimp{k}$, the pair of sequences $(\phi^M)_{M=1}^\infty, (\phi_M)_{M=1}^\infty \subset \States$, defined by
\[\phi^M := \sum_{\omega \in \Code{k}{M}} \pi_\omega \phi_\omega\]
and
\[\phi_M := (\pi \cdot \mathbb{F}^*)^M(\phi_0)\]
are both $d_L$-convergent, and have the same $d_L$-limit $\phi_\pi$. In particular, $\phi_\pi$ is the unique $(\pi \cdot \mathbb{F}^*)$-self-similar state within a finite distance of $\phi_0$.
\end{thm}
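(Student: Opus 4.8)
The plan is to show that both sequences are $d_L$-Cauchy (hence convergent by completeness of $(\States,d_L)$, Lemma \ref{lemmaComplete}), that they have the same limit, and that this common limit is the self-similar state $\phi_\pi$. The second sequence $(\phi_M)$ is the easier one: since $\dilupsum(\pi\cdot\mathbb{F}^*)\leq\dilupsup(\mathbb{F})=\Lambda<1$, Lemma \ref{lemmaLip} gives $d_L(\phi_{M+1},\phi_M)\leq\Lambda^M d_L(\phi_1,\phi_0)$, and $d_L(\phi_1,\phi_0)=d_L((\pi\cdot\mathbb{F}^*)\phi_0,\phi_0)$ is finite because it is bounded by $\sum_i\pi_i\,d_L(f_i^*\phi_0,\phi_0)\leq C<\infty$. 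So $(\phi_M)$ is $d_L$-Cauchy; its limit is a $(\pi\cdot\mathbb{F}^*)$-self-similar state within finite distance of $\phi_0$, and it is the unique such state by Theorem \ref{thmExistenceSelfSimilarState}. Call this limit $\phi_\pi$ — consistent with the notation already fixed for the continuous map $\omega\mapsto\phi_\omega$ extended to weights, since $\phi_\pi$ for $\pi\in\osimp{k}$ is precisely the attractor-measure-analogue of the classical IFS $\mathbb{F}^*$ on $(\States,d_L)$.

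The heart of the matter is the first sequence $\phi^M=\sum_{\omega\in\Code{k}{M}}\pi_\omega\phi_\omega$. The key observation is that $\phi^M$ is a $\pi_\omega$-weighted average of the states $\phi_\omega$, $\omega\in\Code{k}{M}$, and each $\phi_\omega$ (for $\omega\in\Code{k}{M}$, identified with $\overline\omega\in\infCode{k}$) is the fixed point of $f_\omega^*$, which has dilation factor at most $\Lambda^M$. I would estimate $d_L(\phi^{M+1},\phi^M)$ by pairing against $b\in\B_1$ and using that $\phi^{M+1}=\sum_{\omega\in\Code{k}{M}}\pi_\omega\bigl(\sum_{j=1}^k\pi_j\phi_{\omega j}\bigr)$ where $\omega j\in\Code{k}{M+1}$ is the concatenation. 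Since convex combinations of states whose mutual $d_L$-distances are all $\leq D$ lie within $d_L$-distance $\leq D$ of one another, it suffices to control $\sup_{\omega\in\Code{k}{M},\,j} d_L(\phi_\omega,\phi_{\omega j})$. Now $\phi_\omega$ is fixed by $f_\omega^*$ and $\phi_{\omega j}$ is fixed by $f_{\omega j}^*=(f_\omega\circ f_j)^*$; applying $f_\omega^*$ contracts by $\Lambda^M$, and $d_L(\phi_\omega, f_\omega^*\phi_{\omega j})$ reduces (after one more application) to $d_L(\phi_{\overline{j}}, \text{something within }C/(1-\Lambda)\text{ of }\phi_0)$-type quantities, all uniformly bounded using the global bound $d_L(\phi_0,\phi_\omega)<C/(1-\Lambda)$ recorded just before the theorem. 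This yields $d_L(\phi^{M+1},\phi^M)\leq \Lambda^M\cdot\text{const}$, so $(\phi^M)$ is $d_L$-Cauchy.

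Finally I would identify the two limits. The cleanest route is to show $d_L(\phi^M,\phi_M)\to 0$ directly: both are $\pi_\omega$-weighted averages over $\omega\in\Code{k}{M}$, namely $\phi^M=\sum_\omega\pi_\omega\phi_\omega$ and $\phi_M=(\pi\cdot\mathbb{F}^*)^M\phi_0=\sum_\omega\pi_\omega f_\omega^*\phi_0$, so by the convexity remark it suffices to bound $\sup_{\omega\in\Code{k}{M}} d_L(\phi_\omega, f_\omega^*\phi_0)$. But $\phi_\omega$ is the fixed point of $f_\omega^*$ and $f_\omega^*$ has dilation factor $\leq\Lambda^M$, so $d_L(\phi_\omega,f_\omega^*\phi_0)=d_L(f_\omega^*\phi_\omega, f_\omega^*\phi_0)\leq\Lambda^M d_L(\phi_\omega,\phi_0)\leq\Lambda^M C/(1-\Lambda)\to 0$. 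Hence the two sequences share a limit, which is $\phi_\pi$ as identified above. The main obstacle I anticipate is bookkeeping the composition structure $f_{\omega j}=f_\omega\circ f_j$ and the various uniform bounds cleanly — in particular making precise the ``convex combinations of mutually close states are mutually close'' principle in the extended metric $(\States,d_L)$ and verifying the dilation factor of $f_\omega^*$ is $\leq\Lambda^M$ (which follows from submultiplicativity of $\dilup$ under composition, $\dilup(f_\omega)\leq\prod_{m}\dilup(f_{\omega_m})\leq\Lambda^M$, itself an easy consequence of the definition). None of these steps is deep, but assembling the estimates in the right order is where care is needed.
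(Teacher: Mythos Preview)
Your proposal is correct and follows essentially the same route as the paper: the paper shows $(\phi_M)$ converges via Theorem~\ref{thmExistenceSelfSimilarState} (after bounding $d_L(\phi_0,(\pi\cdot\mathbb{F}^*)\phi_0)\leq C$ by convexity), and then proves $d_L(\phi^M,\phi_M)\to 0$ by exactly your final estimate $d_L(\phi_\omega,f_\omega^*\phi_0)=d_L(f_\omega^*\phi_\omega,f_\omega^*\phi_0)\leq \Lambda^M C/(1-\Lambda)$, written out at the level of pairings with $b\in\B_1$. Your separate argument that $(\phi^M)$ is Cauchy is therefore superfluous---convergence of $(\phi^M)$ follows immediately from convergence of $(\phi_M)$ together with $d_L(\phi^M,\phi_M)\to 0$---but it is not wrong.
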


\begin{proof}
Convexity of $d_L$ provides
\[d_L\left(\phi_0, (\pi \cdot \mathbb{F}^*)(\phi_0)\right) \leq \sum_{i=1}^k \pi_i d_L\left(\phi_0, f^*_i(\phi_0)\right) \leq C,\]
so we may apply Theorem \ref{thmExistenceSelfSimilarState} to deduce that $\phi_M$ is $d_L$-convergent to a $(\pi \cdot \mathbb{F}^*)$-self-similar state.

Fix $b \in \samA$. Then
\begin{align*}
|\phi^M(b)-\phi_M(b)| &= \left|\sum_{\omega \in \Code{k}{M}} \pi_\omega \phi_\omega(b) - (\pi \cdot \mathbb{F}^*)^M(\phi_0)(b)\right| \\
&= \left|\sum_{\omega \in \Code{k}{M}} \pi_\omega \phi_\omega(b) - \pi_\omega f^*_\omega(\phi_0)(b)\right| \\
&\leq \sum_{\omega \in \Code{k}{M}} \pi_\omega \left|\phi_\omega(b) - f^*_\omega(\phi_0)(b)\right| \\
&= \sum_{\omega \in \Code{k}{M}} \pi_\omega \left|f^*_\omega(\phi_\omega)(b) - f^*_\omega(\phi_0)(b)\right| \\
&= \sum_{\omega \in \Code{k}{M}} \pi_\omega \left|\phi_\omega(f_\omega(b)) - \phi_0(f_\omega(b))\right| \\
&\leq \sum_{\omega \in \Code{k}{M}} \pi_\omega d_L\left(\phi_\omega, \phi_0\right)L(f_\omega(b)) \\
&\leq \sum_{\omega \in \Code{k}{M}} \pi_\omega \frac{C}{1-\Lambda} \Lambda^M L(b) \\
&= \frac{CL(b)}{1-\Lambda} \Lambda^M,
\end{align*}
where the third equality holds by definition of $\phi_\omega$, so $d^L(\phi_M, \phi^M) \leq C\Lambda^M/(1-\Lambda)  \to 0$ as $M \to \infty$. As already noted, $\phi_M$ $d_L$-converges to the $(\pi\cdot\mathbb{F}^*)$-self-similar state $\phi_\pi$, so $\phi^M$ must $d_L$-converge to $\phi_\pi$ also.

\end{proof}

The above theorem provides the following two corollaries, which are clear noncommutative analogues of what holds in the classical setting (Theorems 3.1(3)(v) and 4.4(4)(i) of \cite{Hutchinson}). Namely, that the attractor of a classical strictly contractive IFS is the closure of the set of fixed points of compositions of the maps in said IFS, and that the associated self-similar measures are given explicitly by a pullback via the code map.

\begin{cor}\label{corDenseSubprojection}
\[\csupp\left(\phi_\pi\right) = \overline{\left(\bigvee_{\omega \in \Code{k}{M}, M \in \mathbb{N}} \supp\left(\phi_\omega\right)\right)}.\]
\end{cor}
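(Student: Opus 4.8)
Write $p := \csupp(\phi_\pi)$ and let $r$ denote the closed projection on the right-hand side. The plan is to prove the two inequalities $p \le r$ and $r \le p$; both rest on the weak-$*$ upper semicontinuity of the pairing with a closed projection (Lemma \ref{lemmaClosedUpperSemicontinuous}), applied to a suitable weak-$*$-approximating sequence of $\phi_\pi$, respectively of the individual fixed states $\phi_\omega$.

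For $p \le r$: by construction $\supp(\phi_\omega) \le r$ for every $\omega \in \Code{k}{M}$ and every $M \in \NN$, so $\phi_\omega(r) \ge \phi_\omega(\supp(\phi_\omega)) = 1$, hence $\phi_\omega(r) = 1$. By Theorem \ref{thmContractiveCharacterisation} the convex combinations $\phi^M = \sum_{\omega \in \Code{k}{M}} \pi_\omega \phi_\omega$ satisfy $\phi^M \dLto \phi_\pi$, hence $\phi^M \wsto \phi_\pi$ by Lemma \ref{lemmadLtoImplieswsto}. Since $\phi^M(r) = \sum_{\omega} \pi_\omega \phi_\omega(r) = 1$ for every $M$ and $r$ is closed, upper semicontinuity forces $\phi_\pi(r) \ge \limsup_M \phi^M(r) = 1$, so $\phi_\pi(r) = 1$; minimality of $\csupp(\phi_\pi)$ among closed projections of full $\phi_\pi$-mass then gives $p \le r$.

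For $r \le p$: since $p$ is closed and $r$ is the least closed majorant of $\bigvee_{\omega, M} \supp(\phi_\omega)$, it suffices to show $\supp(\phi_\omega) \le p$, i.e.\ $\phi_\omega(p) = 1$, for each $\omega \in \Code{k}{M}$. First, because $\phi_\pi$ is $(\pi\cdot\mathbb{F}^*)$-self-similar with $\pi \in \osimp{k}$ and $\phi_\pi(p) = 1$, the computation opening the proof of Theorem \ref{thmSupport} shows $f_\nu^*\phi_\pi(p) = 1$ for every $\nu \in \Code{k}{N}$ and every $N$. Second, fix $\omega \in \Code{k}{M}$; then $f_\omega^*$ is a $d_L$-contraction on $\States$ with constant at most $\dilup(f_\omega) \le \Lambda^M < 1$ whose unique fixed state within finite $d_L$-distance of $\phi_0$ is $\phi_\omega$, and since $\phi_\pi$ also lies at finite $d_L$-distance from $\phi_0$ (and hence from $\phi_\omega$), the iterates $(f_\omega^*)^j(\phi_\pi)$ converge to $\phi_\omega$ in $d_L$, hence weakly-$*$. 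Writing $\omega^{(j)} \in \Code{k}{Mj}$ for the $j$-fold concatenation of $\omega$ so that $f_\omega^{\,j} = f_{\omega^{(j)}}$, the first point gives $(f_\omega^*)^j(\phi_\pi)(p) = \phi_\pi(f_{\omega^{(j)}}(p)) = 1$ for all $j$, whence $\phi_\omega(p) \ge \limsup_j (f_\omega^*)^j(\phi_\pi)(p) = 1$ by upper semicontinuity of pairing with the closed projection $p$. Thus $\supp(\phi_\omega) \le p$ for all $\omega$ and $M$, so $r \le p$, and combining the two inequalities yields $\csupp(\phi_\pi) = r$.

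The main obstacle is the second inequality: the projections $\supp(\phi_\omega)$ are attached to the fixed states of the individual compositions $f_\omega^*$, which a priori bear no relation to $\phi_\pi$. The bridge is the weak-$*$ convergence $(f_\omega^*)^j(\phi_\pi) \to \phi_\omega$ together with the observation extracted from the proof of Theorem \ref{thmSupport} that $\csupp(\phi_\pi)$ is sent to a full-$\phi_\pi$-mass projection by each $f_\nu$; everything else is a routine assembly of previously established facts.
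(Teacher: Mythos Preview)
Your proof is correct and follows essentially the same route as the paper's: both inequalities are obtained by pairing a closed projection against a weak-$*$ convergent sequence of states (namely $\phi^M \to \phi_\pi$ for $p \le r$, and $(f_\omega^*)^j\phi_\pi \to \phi_\omega$ for $r \le p$) and invoking upper semicontinuity. The only cosmetic difference is that the paper reproduces inline the computation showing $f_\nu^*\phi_\pi(p)=1$ for all $\nu$, whereas you cite it from the proof of Theorem~\ref{thmSupport}.
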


\begin{proof}
Let $p$ and $q$ denote the projections on the left and right of the equality above, respectively. 

We first show $p \leq q$. Note that $\phi^M(q)=1$ for all $M$, by definition. Combining this with the facts that $\phi^M \dLto \phi_\pi$ (and hence the same limit holds weak-$*$), and that the pairing with $q$ is weak-$*$ upper semicontinuous (by Lemma \ref{lemmaClosedUpperSemicontinuous}), we have
\[1 \geq \phi_\pi(q) \geq \limsup_{M \to \infty} \phi^M(q) = 1.\]
Hence $\supp(\phi_\pi) \leq q$, and by closedness of $q$ again we conclude $p = \csupp(\phi_\pi) \leq q$.

To finish, we show $p \geq q$. By self-similarity, we find
\[(\pi\cdot\mathbb{F}^*)^M\phi_\pi(p) = \sum_{\omega \in \Code{k}{M}}\pi_\omega f_\omega^*\phi_\pi(p) = 1\]
for each $M$. By (strict) positivity of each $\pi_\omega$, this implies that $f_\omega^*\phi_\pi(p) = 1$ for each $\omega \in \Code{k}{M}$ and each $M$.

Fixing some such $\omega \in \Code{k}{M}$ and noting that $\omega^j = (\omega, \ldots, \omega) \in \Code{k}{jM}$ for each $j$, this gives
\[1 = f^*_{\omega^j}\phi_\pi(p) = \left(f_\omega^*\right)^j\phi_\pi(p).\]
However, since $d_L(\phi_\omega, \phi_\pi) <\infty$ we know that $\left(f_\omega^*\right)^j\phi_\pi \dLto \phi_\omega$, and hence the same convergence holds in the weak-$*$ topology (Lemma \ref{lemmadLtoImplieswsto}). So Lemma \ref{lemmaClosedUpperSemicontinuous} implies
\[1 = \limsup_{j \to \infty} \left(f_\omega^*\right)^j\phi_\pi(p) \leq \phi_\omega(p)\leq 1\]
and hence $\phi_\omega(p) = 1$. In other words, $\supp\left(\phi_\omega\right) \leq p$ for each $\omega \in \Code{k}{M}$ and each $M$, from which $p \geq q$ follows from closedness of $p$.
\end{proof}

\begin{cor}\label{corPushforward}
The map $g:\infCode{k} \to \States$ defined by $g(\omega) = \phi_\omega$
is weak-$*$ measurable, and
\[\phi_\pi = \int_{\infCode{k}} \phi_\omega d\mathbb{P}_\pi(\omega),\]
with the integral converging in the weak-$*$ sense.
In particular, $\phi_\pi$ is contained in the weak-$*$ closed convex hull of $K_\mathbb{F}$.
\end{cor}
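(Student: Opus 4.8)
The plan is to establish the three assertions of Corollary \ref{corPushforward} in order: weak-$*$ measurability of $g$, the integral formula for $\phi_\pi$, and the consequence that $\phi_\pi$ lies in the weak-$*$ closed convex hull of $K_\mathbb{F}$. For measurability, I would first recall that by the continuity statement quoted just before Theorem \ref{thmContractiveCharacterisation} (Theorem 3.1(3)(vii) of \cite{Hutchinson}), the map $g:\infCode{k}\to(\States,d_L)$ is in fact $d_L$-continuous; composing with the bounded linear functional $b\mapsto(\cdot)(b)$ and using Lemma \ref{lemmadLtoImplieswsto} (that $d_L$-convergence implies weak-$*$ convergence) shows that $\omega\mapsto\phi_\omega(b)$ is continuous, hence Borel measurable, for each fixed $b\in\saA$; since the product topology on $\infCode{k}$ is metrisable and separable, this upgrades to weak-$*$ measurability of $g$ in the required sense. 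The integral $\int_{\infCode{k}}\phi_\omega\,d\mathbb{P}_\pi(\omega)$ then exists as a weak-$*$ limit (equivalently is defined coordinatewise by $b\mapsto\int\phi_\omega(b)\,d\mathbb{P}_\pi(\omega)$, a bounded linear functional of norm $\le 1$ which evaluates to $1$ on any approximate identity, hence a state).

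The heart of the proof is the identity $\phi_\pi=\int_{\infCode{k}}\phi_\omega\,d\mathbb{P}_\pi(\omega)$. I would prove it by testing against an arbitrary $b\in\saA$ and comparing with the finite-sum approximations $\phi^M=\sum_{\omega\in\Code{k}{M}}\pi_\omega\phi_\omega$ from Theorem \ref{thmContractiveCharacterisation}. The point is that $\phi^M$ is precisely the expectation of the simple function $\omega\mapsto\phi_{\omega|_M}$ (identifying $\Code{k}{M}\subset\infCode{k}$ via $\omega\mapsto\overline\omega$), because $\pi_\omega=\mathbb{P}_\pi(\{\omega':\omega'|_M=\omega\})$. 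So
\begin{align*}
\left|\phi^M(b)-\int_{\infCode{k}}\phi_\omega(b)\,d\mathbb{P}_\pi(\omega)\right|
&\le \int_{\infCode{k}}\left|\phi_{\omega|_M}(b)-\phi_\omega(b)\right|\,d\mathbb{P}_\pi(\omega).
\end{align*}
The integrand is controlled using the Banach fixed point estimates already in hand: both $\phi_{\omega|_M}$ and $\phi_\omega$ are limits of the same Cauchy iteration $\left(f^*_{\omega|_N}(\phi_0)\right)_N$ up to level $M$, so $d_L\!\left(\phi_{\omega|_M},\phi_\omega\right)\le 2C\Lambda^M/(1-\Lambda)$ uniformly in $\omega$, and hence $\left|\phi_{\omega|_M}(b)-\phi_\omega(b)\right|\le \bigl(2C\Lambda^M/(1-\Lambda)\bigr)L(b)$ when $L(b)<\infty$. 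Thus the right-hand side tends to $0$ as $M\to\infty$, so $\phi^M(b)\to\int\phi_\omega(b)\,d\mathbb{P}_\pi(\omega)$; but by Theorem \ref{thmContractiveCharacterisation}, $\phi^M\dLto\phi_\pi$ and hence $\phi^M(b)\to\phi_\pi(b)$. Therefore the two limits agree on the dense set $\B\subset\saA$ of elements with $L(b)<\infty$, and since both sides are bounded (norm $\le 1$) functionals, they agree on all of $\A$ by Property \ref{propertyDense}. Finally, since $\phi_\pi$ is the $\mathbb{P}_\pi$-barycentre of the weak-$*$ measurable $\States$-valued map $g$ whose essential range lies in $K_\mathbb{F}$, a standard barycentre argument (a state that is a probability average of states in a set $K$ lies in the weak-$*$ closed convex hull of $K$) places $\phi_\pi$ in the weak-$*$ closed convex hull of $K_\mathbb{F}$.

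The main obstacle I anticipate is bookkeeping rather than depth: making the uniform-in-$\omega$ estimate $d_L\!\left(\phi_{\omega|_M},\phi_\omega\right)=O(\Lambda^M)$ fully rigorous requires carefully identifying $\phi_{\omega|_M}$ (the fixed point of $f^*_{\omega|_M}$ near $\phi_0$) with a truncation of the tail-dependent iteration defining $\phi_\omega$, and then summing the geometric tail of Banach-fixed-point increments — exactly the estimate implicit in the proof of Theorem \ref{thmContractiveCharacterisation} and in the bound $d_L(\phi_0,\phi_\omega)<C/(1-\Lambda)$ stated before it. A secondary technical point is handling the non-unital setting when passing from agreement on $\A$ to the assertion that the weak-$*$ integral is a genuine \emph{state} (norm exactly $1$): this is dispatched by evaluating against the approximate identity $(I_n)$ of Property \ref{propertyAI}, exactly as in Lemma \ref{lemmaAdjointStatetoState}, since each $\phi_\omega$ is a state and $\phi_\omega(I_n)\to 1$ with the convergence dominated.
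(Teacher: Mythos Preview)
Your proposal is correct and follows the same overall arc as the paper (approximate $g$ by the simple functions $g_M(\omega)=\phi_{\omega|_M}$, identify $\int g_M\,d\mathbb{P}_\pi$ with $\phi^M$, and pass to the limit using Theorem~\ref{thmContractiveCharacterisation}), but it makes two technical choices that differ from the paper's.

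For measurability, the paper does not invoke continuity of $g$ at all: it simply observes that each $g_M$ is a simple $A^*$-valued function, so $\omega\mapsto g_M(\omega)(a)$ is measurable, and then that $g_M(\omega)(a)\to g(\omega)(a)$ pointwise (because $\phi_{\omega|_M}\dLto\phi_\omega$, hence weak-$*$). Your continuity argument is also fine, but the pointwise-limit-of-simple-functions route avoids having to match the Borel $\sigma$-algebra of the product topology with the tensor product $\sigma$-algebra.

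For the passage to the limit in $\int g_M(\omega)(a)\,d\mathbb{P}_\pi(\omega)\to\int g(\omega)(a)\,d\mathbb{P}_\pi(\omega)$, the paper uses dominated convergence with the trivial dominating constant $\|a\|$, which works immediately for \emph{every} $a\in\A$ and bypasses any quantitative estimate on $d_L(\phi_{\omega|_M},\phi_\omega)$. Your route via the uniform bound $d_L(\phi_{\omega|_M},\phi_\omega)\le 2C\Lambda^M/(1-\Lambda)$ followed by density of $\B$ in $\saA$ also works, but the ``bookkeeping'' obstacle you flag is genuinely where the extra effort lies, and it is entirely avoidable: the pointwise convergence $\phi_{\omega|_M}(a)\to\phi_\omega(a)$ plus the uniform bound $|\phi_{\omega|_M}(a)|\le\|a\|$ already suffices. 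In short, your argument proves a little more (a rate on $\B$) at the cost of a little more work; the paper's is the more economical version of the same idea.
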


\begin{proof}
Consider the sequence of functions $g_M:\infCode{k} \to \States$ defined by $g_M(\omega) = \phi_{\omega|_M}$. Each $g_M$ is simple (i.e. a finite $A^*$-weighted sum of indicator functions of measurable sets), and hence is weak-$*$ measurable.

Fix $a \in \A$. Then $g_M(\omega)(a) = \phi_{\omega|_M}(a) \to \phi_\omega(a)$ since $\phi_{\omega|_M} \dLto \phi_\omega$ (so convergence also holds weak-$*$, by Lemma \ref{lemmadLtoImplieswsto}). So $g(\cdot)(a)$ is the pointwise limit of the measurable functions $g_M(\cdot)(a)$, and so is itself measurable.

Noting that $|g_M(\cdot)(a)|, |g(\cdot)(a)| \leq ||a|| \in L^1(\infCode, \mathbb{P}_\pi)$, we may thus apply the dominated convergence theorem to deduce
\begin{align*}
\int_{\infCode{k}} \phi_\omega(a) d\mathbb{P}_\pi(\omega) &= \int_{\infCode{k}} g(\omega)(a)d\mathbb{P}_\pi(\omega) \\
&= \lim_{M \to \infty} \int_{\infCode{k}} g_M(\omega)(a)d\mathbb{P}_\pi(\omega) \\
&= \lim_{M \to \infty} \sum_{\omega' \in {\Code{k}{M}}} \phi_{\omega'}(a)\mathbb{P}_\pi(\{\omega \in \infCode{k}; ~ \omega|_M = \omega'\}) \\
&= \lim_{M \to \infty} \sum_{\omega' \in {\Code{k}{M}}} \pi_{\omega'}\phi_{\omega'}(a) \\
&= \lim_{M \to \infty} \phi^M(a),
\end{align*}
where the equality $\pi_{\omega'} = \mathbb{P}_\pi(\{\omega \in \infCode{k}; ~ \omega|_M = \omega'\})$ was noted in the introduction to this section. From this point, uniqueness of weak-$*$ limits implies the second statement. The last statement follows immediately from noting that the above approximation is achieved through convex combination of states in $K_\mathbb{F}$.
\end{proof}


\section{Closing Comments and Future Work}

There is much left to be done regarding self-similar states and fractal projections. We illustrate some of these topics and progress towards their understanding below.

\subsection{The topology induced by $d_L$}\label{subsecTopology}\

In the classical (non-extended) setting, it is known that any strictly contractive IFS has compactly supported self-similar states \cite{Hutchinson}. This result heavily relies on the fact that the topology induced by the Monge-Kantorovich distance is related to the weak-$*$ topology on the state space (although they disagree on unbounded domains).

The question of whether the topology induced by $d_L$ agrees with the weak-$*$ topology in the unital setting has a conclusive answer provided by Rieffel's \cite{Rieffel1998} Theorem 1.8. This states that if $\A$ is unital and $(\A,L)$ is an extended complete spectral metric space, then $\States$ is $d_L$-bounded if and only if $\B_1 /\mathbb{R}I$ is bounded, and $d_L$ induces the weak-$*$ topology on $\States$ if and only if $\B_1 /\mathbb{R}I$ is totally bounded (bounded/totally bounded with respect to the quotient norm).

The non-unital setting is much more nuanced. Indeed, as already mentioned, even in the commutative case one does not actually expect the $d_L$ topology and the weak-$*$ topology to agree. Latr{\'e}moli{\`e}re provides an example of a state $\phi$ on $C_0(\RR)$ which is infinitely distant (with respect to the usual Monge-Kantorovich distance) from the Dirac mass at $0$ (\cite{Latremoliere2007}, Example 1.2). In fact, $\phi$ is infinitely distant from any compactly supported state (as can be verified by slightly modifying Latr{\'e}moli{\`e}re's argument). Compactly supported states are clearly weak-$*$ dense in the commutative $\sigma$-compact setting (verified in general in the upcoming Lemma \ref{lemmacStatesDense}), so the $d_L$ topology does not agree with the weak-$*$ topology in this case.

To circumvent the above issues (mainly present through the fact that $d_L$ may take the value $\infty$), both localisation and artificial bounding of $d_L$ have been trialled (see for example \cite{Latremoliere2007}, \cite{Latremoliere2013} and references therein).

However, work of Cagnache, D'Andrea, Martinetti, and Wallet (see \cite{Cagnache2011}) on the metric structure of the Moyal plane suggests that the infinite values of $d_L$ may be a feature rather than a bug. Cagnache et. al. provide equivalent states of the Moyal plane $C^*$-algebra which are infinitely distant with respect to the spectral distance induced by a very reasonable Lipschitz seminorm (arising from a very reasonable spectral triple). Importantly, the Lipschitz seminorm considered is such that $L(a)$ is non-zero for every $a$ which isn't a multiple of the unit (otherwise infinitely distant states are easy to find). Given that the constructed states are equivalent, Kadison's transitivity theorem implies that they both belong to the same weak-$*$ compact face of state space, so one should not think of the infinite distance as arising from a localisation issue as in the example of Latr{\'e}moli{\`e}re.

With all this in mind, we throw our own tentative topologically-inclined definition into the ring (after some preliminaries).

\begin{defn}\label{defncA}
Let $\A$ be a $C^*$-algebra. An $a \in \saA$ is called a \emph{compactly supported bump} if $0 \leq a \leq I$, $||a||=1$, and there exists $I_a \in \saA$ with $0 \leq I_a \leq I$ and
\[I_a a = a.\]
The set of compactly supported bumps is denoted $\cA$.
\end{defn}

\begin{lemma}\label{lemmaCompactsDense}
$\cA \subset \A$ has dense span.
\end{lemma}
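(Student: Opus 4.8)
The plan is to reduce to positive elements and then approximate each positive element in norm by a single nonnegative scalar multiple of a compactly supported bump, produced by continuous functional calculus.

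First I would recall that $\A = \Span_\CC \A^+$ (every self-adjoint element is $a_+ - a_-$, and $\A = \saA + i\saA$), so it suffices to show $\A^+ \subseteq \overline{\Span \cA}$; since $\overline{\Span \cA}$ is a closed subspace, this will give $\A = \overline{\Span \A^+} \subseteq \overline{\Span \cA}$. Fix $a \in \A^+$; rescaling, we may assume $\|a\| \leq 1$, so $\sigma(a) \subseteq [0,1]$. Given $\ep > 0$, let $b_\ep \colon [0,1] \to [0,1]$ be the continuous function equal to $0$ on $[0,\ep]$, equal to $t$ on $[2\ep,1]$, and linear in between (so $b_\ep(0) = 0$ and $b_\ep(t) \le t$), and set $a_\ep := b_\ep(a) \in \A^+ \cap \A$. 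Then $\|a - a_\ep\| = \sup_{t \in \sigma(a)} |t - b_\ep(t)| \leq 2\ep$, since $b_\ep(t) = t$ for $t \ge 2\ep$ and $0 \le t - b_\ep(t) \le t \le 2\ep$ otherwise.

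Next I would verify that $a_\ep$ is a nonnegative multiple of an element of $\cA$. If $a_\ep = 0$ this is trivial; otherwise put $\tilde b := a_\ep / \|a_\ep\|$, so $0 \leq \tilde b \leq I$ and $\|\tilde b\| = 1$. Let $h_\ep \colon [0,1] \to [0,1]$ be continuous with $h_\ep(0) = 0$, $h_\ep \equiv 0$ on $[0,\ep/2]$, $h_\ep \equiv 1$ on $[\ep,1]$, linear in between; then $h_\ep(a) \in \A$, $0 \leq h_\ep(a) \leq I$, and $h_\ep(t) b_\ep(t) = b_\ep(t)$ for all $t \in [0,1]$ (where $b_\ep$ vanishes the identity is automatic, and where $b_\ep$ does not vanish we have $t \ge \ep$, hence $h_\ep(t) = 1$). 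Therefore $h_\ep(a)\,\tilde b = \tilde b$, so $I_{\tilde b} := h_\ep(a)$ witnesses $\tilde b \in \cA$, and $a_\ep = \|a_\ep\|\,\tilde b \in \Span \cA$. Letting $\ep \to 0$ gives $a \in \overline{\Span \cA}$, completing the argument.

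The only point that needs genuine care is checking that the functional-calculus cutoff $a_\ep$ really is a nonnegative multiple of an honest compactly supported bump — in particular producing the dominating positive contraction $I_{\tilde b}$ — which the explicit choice of $h_\ep$ handles; the reduction to positive elements and the norm estimate are routine.
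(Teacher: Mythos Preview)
Your proof is correct and follows essentially the same approach as the paper: reduce to positive elements, apply a continuous cutoff function via functional calculus to approximate, and produce the dominating contraction $I_{\tilde b}$ by another functional-calculus cutoff. The only cosmetic difference is that the paper normalises first (taking $\|c\|=1$, so the cutoff $g_n(c)$ already has norm $1$ since $1\in\sigma(c)$ and $g_n(1)=1$), whereas you rescale after applying the cutoff; both are fine.
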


\begin{proof}

Given any $c \in \A^+$ with $||c||=1$, and $n \in \NN$, let $g_n:[0,1] \to [0,1]$ be the function which is $0$ on $[0,1/n]$, agrees with the identity on $[2/n,1]$ and linearly interpolates between, and let $I_n:[0, 1] \to [0,1]$ be identically $1$ on $[1/n, 1]$ and $0$ at $0$, linearly interpolating between. Then by functional calculus for non-unital $C^*$-algebras, $I_n(c) \in \saA$, $0 \leq I_n(c) \leq I$, and $I_n(c)g_n(c) = g_n(c)$, so $g_n(C) \in \cA$ and $||g_n(c) - c|| \leq 2/n$. Since the span of elements in $\A^+$ with norm $1$ is dense in $\A$, the span of $\cA$ is dense in $\A$.
\end{proof}

The above has the following useful consequence.

\begin{lemma}\label{lemmaStrictOnBounded}
The restriction of the strict topology to bounded subsets $\mA$ is induced by the seminorms $b \mapsto ||ab||, ||ba||$ for $a \in \cA$.
\end{lemma}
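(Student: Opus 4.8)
The plan is to show that the two families of seminorms on bounded subsets of $\mA$ — the full strict seminorms $b \mapsto \|ab\|, \|ba\|$ for $a \in \A$, and the restricted family with $a$ ranging only over $\cA$ — generate the same topology. One inclusion is trivial: since $\cA \subset \A$, the $\cA$-seminorms are a subfamily of the strict seminorms, so the $\cA$-topology is coarser than (or equal to) the strict topology on all of $\mA$, and in particular on bounded subsets. The content is the reverse inclusion: on a ball $\mA_R$, every strict seminorm is dominated by the $\cA$-seminorms up to a constant depending on $R$.

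First I would fix $R>0$ and $a \in \A$, and use Lemma \ref{lemmaCompactsDense}: the span of $\cA$ is norm dense in $\A$, so given $\epsilon>0$ choose a finite linear combination $a' = \sum_j \lambda_j a_j$ with $a_j \in \cA$ and $\|a - a'\| < \epsilon$. Then for any $b \in \mA_R$,
\[\|ab\| \le \|(a-a')b\| + \|a'b\| \le R\epsilon + \sum_j |\lambda_j|\, \|a_j b\|,\]
and symmetrically for $\|ba\|$. Thus if all the finitely many seminorms $b \mapsto \|a_j b\|$ and $b \mapsto \|b a_j\|$ are small (say less than $\delta$), then $\|ab\|, \|ba\| \le R\epsilon + (\sum_j |\lambda_j|)\delta$, which can be made arbitrarily small by first taking $\epsilon$ small and then $\delta$ small. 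This shows each strict basic neighbourhood of a point $b_0 \in \mA_R$ contains an $\cA$-basic neighbourhood (intersected with $\mA_R$), so the two topologies agree on $\mA_R$. Since $R$ was arbitrary, they agree on all bounded subsets.

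The only point requiring any care — and the ``main obstacle,'' though it is a mild one — is the uniform boundedness that makes the approximation argument work: the estimate $\|(a-a')b\| \le \|a-a'\|\,\|b\| \le \epsilon R$ crucially uses that $b$ ranges over a \emph{bounded} set, which is exactly why the statement is restricted to bounded subsets of $\mA$ (on all of $\mA$ the strict topology is strictly finer and the lemma would be false). One should also note that the $\cA$-seminorms are genuinely strict-continuous (each $a_j \in \cA \subset \A \subset \mA$, so $b \mapsto \|a_j b\|$ is one of the defining strict seminorms), so no separate verification that the $\cA$-topology is coarser than strict is needed beyond the trivial subfamily observation. With these remarks the proof is complete.
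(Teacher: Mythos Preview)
Your proof is correct and follows essentially the same approach as the paper's: both arguments use the density of $\operatorname{span}(\cA)$ in $\A$ (Lemma \ref{lemmaCompactsDense}) together with the uniform norm bound available on a bounded set to control the approximation error $\|(a-a')b\|$, thereby dominating each strict seminorm by finitely many $\cA$-seminorms plus an arbitrarily small remainder. The paper phrases the estimate in terms of a bounded net $b_n \to b$ and yours in terms of basic neighbourhoods, but the underlying computation is identical.
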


\begin{proof}
The set of semi-norms above is a (strict) subset of those defining the strict topology, so one direction is immediate. For the other, suppose $\left(b_n\right)_{n \in N} \subset \mA$ is a bounded net and $b \in \mA$ are such that for any $a \in \cA$, $||a(b_n-b)||, ||(b_n-b)a|| \to 0$. Then the same holds for the span of $\cA$ in $\A$ by the triangle inequality.

Now suppose that $a' \in \A$, and fix $\epsilon>0$. Pick a some $a \in \text{span}(\cA)$ with $||a-a'|| \leq \epsilon/4\sup_{n \in N}||b_n||$, and pick $n \in N$ such that for all $m \geq n$, $||a(b_n-b)||, ~||(b_n-b)a||\leq \epsilon/2$. Then
\[||a'(b_n-b)|| \leq ||a'-a|| ||b_n-b|| + ||a(b_n-b)|| \leq \epsilon,\]
with a similar result for the multiplication in the other order, so we are done.
\end{proof}

\begin{defn}
Given $a \in \cA$, we define the $a$-locally flat elements by
\[\locflat{a} := \bigcap_{R>0}a \mB_{R} a.\] 
\end{defn}

The classical picture one should have in mind is that $\locflat{a}$ is the collection of Lipschitz functions whose Lipschitz constant is zero when restricted to the support of $a$.

\begin{lemma}
For any $a \in \cA$, $\locflat{a}$ is a real vector subspace of $\saA$. It is the largest real vector subspace contained in $a\mB_1 a$.
\end{lemma}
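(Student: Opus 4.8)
The plan is to prove three things in order: that $\locflat{a}$ is closed under addition, that it is closed under real scalar multiplication, and then the maximality statement.

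First I would verify the vector space structure. Scalar multiplication is the easy half: if $b \in \locflat{a}$ and $t \in \RR$, then for every $R>0$ we have $b \in a\mB_{R/|t|}a$ (taking $R/0 = \infty$, which causes no trouble since $\mB_\infty = \mB$ and one can use a fixed small $R$ instead), so $tb \in a\mB_R a$; hence $tb \in \locflat{a}$. For addition, suppose $b, b' \in \locflat{a}$ and fix $R>0$. Write $b = ac a$ and $b' = ac'a$ with $L(c), L(c') \leq R/2$; here I need that $b$ can be written in this form with the \emph{seminorm bound controlled}, not merely that $b$ lies in the intersection. The cleanest way is to observe $b \in a \mB_{R/2} a$ for \emph{every} $R$, so pick the representatives at level $R/2$. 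Then $b + b' = a(c+c')a$ with $L(c + c') \leq L(c) + L(c') \leq R$ by subadditivity of the seminorm $L$ on $\samA$, so $b + b' \in a\mB_R a$. As $R>0$ was arbitrary, $b+b' \in \locflat{a}$. (One should be slightly careful that $a \mB_R a \subseteq \samA$ and that $L$ restricted to $\samA$ is indeed subadditive, which it is, being an extended seminorm.)

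For the maximality statement, let $V \subseteq a\mB_1 a$ be a real vector subspace; I want $V \subseteq \locflat{a}$. Take $v \in V$. For any $n \in \NN$, $nv \in V \subseteq a\mB_1 a$, so write $nv = ac_n a$ with $L(c_n) \leq 1$; then $v = a(c_n/n)a$ and $L(c_n/n) \leq 1/n$, so $v \in a\mB_{1/n}a$. Since the sets $a\mB_R a$ are nested decreasingly in $R$ and every $R>0$ exceeds some $1/n$, we get $v \in \bigcap_{R>0} a\mB_R a = \locflat{a}$. This shows $V \subseteq \locflat{a}$; combined with the fact (just proved) that $\locflat{a}$ is itself a real vector subspace contained in $a \mB_1 a$ — note $a\mB_R a \subseteq a\mB_1 a$ for $R \leq 1$, so the intersection lies in $a\mB_1 a$ — this establishes that $\locflat{a}$ is the largest such subspace.

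The main obstacle, such as it is, is purely bookkeeping: making sure that membership in $\locflat{a}$ is consistently phrased as ``$b$ admits, for each $R$, a representative $aca$ with $L(c) \leq R$'' rather than as an opaque set intersection, so that the subadditivity argument for closure under addition actually has representatives to add. There is no deep content here; the proof is a short diagram-chase through the definitions, the nesting of the balls $\mB_R$, and subadditivity and homogeneity of $L$ on $\samA$.
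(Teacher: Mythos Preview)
Your proposal is correct and follows essentially the same approach as the paper: both arguments verify closure under addition by pulling representatives at level $R/2$ and using subadditivity of $L$, verify closure under scalar multiplication by rescaling representatives, and prove maximality by the same ray-rescaling trick (the paper phrases it as ``any ray in $a\mB_1 a$ lies in $\locflat{a}$'', you phrase it as ``any vector subspace $V\subseteq a\mB_1 a$ satisfies $V\subseteq\locflat{a}$'', which is the same content). Your treatment of the $t=0$ case is a little awkwardly worded, but the paper also handles it as a trivial separate case, and there is no actual gap.
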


\begin{proof}
Suppose $r \in \mathbb{R}$, and $b, b' \in \locflat{a}$. Then for each $R>0$ there exists $b_R, b_R' \in \mB_R$ with $b =ab_Ra, ~b' = ab_R'a$.

Clearly if $r=0$, then $rb \in \locflat{a}$. If $r \neq 0$, then $rb = a\tilde{b}_Ra$ where $\tilde{b}_R = r b_{R/r} \in \mB_R$ for each $R$, so $rb \in \locflat{a}$.

Similarly, $b+b' = a \hat{b}_R a$ where $\hat{b}_R = b_{R/2}+b_{R/2}' \in \mB_R$ for each $R>0$, so $b+b' \in \locflat{a}$.

For maximality: given any $b \in a\mB_1 a$ such that the ray generated by $b$ is also contained in $a\mB_1 a$, one can rescale to deduce that $b \in \locflat{a}$. Since $L$ is a norm, $a\mB_1a$ is convex and invariant under negation, so we conclude that the sum of all rays in $a\mB_1a$ is exactly $\locflat{a}$.
\end{proof}

In a non-extended classical metric space, $\locflat{a}$ is one dimensional and spanned by $a^2$, corresponding to the fact that the only functions with Lipschitz semi-norm $0$ are constant. However, we are working with noncommutative analogues of (potentially) extended metric spaces. If two points $x,y$ in a locally compact extended metric space $(X,d)$ are infinitely distant, then there must be a sequence of arbitrarily flat functions $f_i$ with $|f_i(x)-f_i(y)|$ strictly bounded below. Thus if one were to localise to a compact neighbourhood of $\{x,y\}$ via a compactly supported continuous function $a$, then $\locflat{a}$ will be two dimensional (spanned by a cut-off of the identity, and an appropriate limit of cut-offs of such arbitrarily flat functions).

This brings us to our definition.

\begin{defn}\label{defnBoundedConsistent}
An extended complete spectral metric space $(\A,L)$ is said to be
\begin{itemize}
\item \emph{locally bounded}, if $\locflat{a}$ is one dimensional and $a\mB_1a / \locflat{a}$ is bounded in the quotient norm,
\item \emph{locally semi-bounded}, if $\locflat{a}$ is finite dimensional and $a\mB_1a / \locflat{a}$ is bounded in the quotient norm,
\item \emph{locally consistent}, if $\locflat{a}$ is one dimensional and $a\mB_1a / \locflat{a}$ is totally bounded in the quotient norm,
\item \emph{locally semi-consistent}, if $\locflat{a}$ is finite dimensional and $a\mB_1a / \locflat{a}$ is totally bounded in the quotient norm,
\end{itemize}
for all $a \in \cA \cap \saA$.
\end{defn}

If $A$ is unital (so $I \in \cA$), then the notions of locally bounded (resp. consistent) correspond to those of \cite{Rieffel2003}, and are equivalent to the fact that $d_L$ is bounded (resp. generates the weak-$*$ topology on $\States$). The same proofs can be slightly modified to show analogous localised versions of Rieffel's results in the locally bounded/locally consistent case, while the notions of locally semi-bounded and locally semi-consistent imply similar results providing one restricts to attention to finitely-distant states. Perhaps most importantly, these concepts return what we expect when $\A$ is commutative.

\begin{thm}
If $\A = C_0(X)$ for some locally compact Hausdorff space $X$ and $(A,L)$ is an extended complete spectral metric space, then $(\A,L)$ is
\begin{itemize}
\item locally bounded, if and only if $(X, d_L)$ is a non-extended complete metric space,
\item locally semi-bounded, if and only if $(X, d_L)$ is an extended complete metric space,
\item locally consistent, if and only if $(X, d_L)$ is a non-extended complete metric space and $d_L$ induces the topology of $X$,
\item locally semi-consistent, if and only if $(X, d_L)$ is an extended complete metric space and $d_L$ induces the topology of $X$.
\end{itemize}
\end{thm}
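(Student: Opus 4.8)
The plan is to reduce the four claimed equivalences to the unital results of Rieffel \cite{Rieffel1998, Rieffel2003} applied locally, using the dictionary between compactly supported bumps $a \in \cA \cap \saA$ and compact neighbourhoods in $X$. The first thing I would do is unwind the left-hand conditions in the commutative case. If $\A = C_0(X)$, then a compactly supported bump $a$ corresponds to a function $0 \le a \le 1$ with $\|a\| = 1$ whose support is contained in a compact set on which $a$ equals $1$ (the set $\{I_a a = a\}$ forces $a \equiv 1$ on an open set and $\csupp(a)$ compact). Write $U_a = \{a = 1\}^\circ$ and $K_a = \csupp(a)$. The key local fact is that $a\mB_1 a$, modulo $\locflat{a}$, is isometrically comparable to the space of Lipschitz functions on $K_a$ (or on $U_a$) modulo the functions that are $d_L$-locally flat there; and $\locflat{a}$ is the span of $a^2$ together with ``$d_L$-constants on the closure of $U_a$'', i.e. functions that are $\pm\infty$-comparable on $K_a$. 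So $\dim \locflat{a} = 1$ iff $K_a$ contains no pair of points at infinite $d_L$-distance that would force an extra flat direction, and $\dim \locflat{a} < \infty$ is the analogous finiteness statement about the ``$d_L$-connected components'' of $K_a$.

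Next I would prove the forward and backward implications of the semi-bounded and bounded cases together, by an exhaustion argument. For ``$(X,d_L)$ is an extended complete metric space $\Rightarrow$ locally semi-bounded'': fix $a \in \cA \cap \saA$; the compact set $K_a$ meets only finitely many $d_L$-finite-distance components (by compactness and Lemma \ref{lemmaComplete}-type arguments, since each such component is $d_L$-open-and-closed and they cover $K_a$), so $\locflat{a}$ is finite dimensional, and on each component $d_L$ restricted to the relevant cutoff is an honest metric, so applying Rieffel's unital criterion to the (compact) one-point compactification / unitisation of the cutoff algebra $\overline{C_0(U_a)}$ gives boundedness of $a\mB_1 a / \locflat{a}$. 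For the converse, if $(X, d_L)$ were not an extended metric — i.e. $d_L$ were genuinely degenerate, taking value $0$ on distinct points, contradicting Equation (\ref{eqnLConsistent}) being a metric — one shows some $K_a$ witnesses infinite-dimensional $\locflat{a}$ or unbounded quotient. The ``locally bounded $\Leftrightarrow$ non-extended'' case is the specialisation where $\dim \locflat{a} = 1$ for all $a$, which by the above is exactly the statement that no compact subset of $X$ contains two points at infinite distance, i.e. $d_L$ is finite, i.e. $(X,d_L)$ is non-extended (finiteness on a dense/exhausting family of compacta plus completeness gives global finiteness).

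For the two ``consistent'' cases I would add the total-boundedness refinement: by Rieffel's Theorem 1.8 (\cite{Rieffel1998}), on each compact piece total boundedness of $a\mB_1 a / \locflat{a}$ in quotient norm is equivalent to $d_L$ inducing the weak-$*$ topology on the states supported there, which for $\A = C_0(X)$ is the statement that $d_L$ metrises the subspace topology on $K_a$ (identifying $x \in X$ with $\mathrm{ev}_x$, as noted after Equation (\ref{eqndL})). Running this over an exhaustion of $X$ by the $K_a$ and using local compactness of $X$ shows $d_L$ induces the topology of $X$ globally iff it does so on each compact piece iff local (semi-)consistency holds; combined with the (semi-)bounded results this yields all four equivalences.

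The main obstacle I expect is the precise identification of $\locflat{a}$ in the commutative case and the bookkeeping of ``$d_L$-finite-distance components'' — in particular, showing that a compact $K_a$ meets only finitely many such components and that $a\mB_1 a / \locflat{a}$ genuinely splits (isometrically, up to uniform constants) as a finite direct sum over those components, so that Rieffel's single-algebra criterion can be applied componentwise. Making the reduction to Rieffel's unital theorems rigorous requires care about the cutoff/unitisation: one must check that localising $L$ by $b \mapsto a b a$ and passing to the unitisation of the hereditary subalgebra $\overline{a \A a}$ produces exactly the Lipschitz-type seminorm data to which \cite{Rieffel1998, Rieffel2003} apply, including verifying the analogue of property (\ref{propertyLSC}) survives localisation. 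Everything else — completeness transfer, the weak-$*$/topology comparison on compacta — is routine once that dictionary is in place.
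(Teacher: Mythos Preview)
Your proposal takes a different route from the paper. The paper's own argument is only a sketch: it asserts that the first two equivalences are ``a straightforward calculation from the definitions,'' and that the last two follow from ``a fairly simple application of the Arzel\`a--Ascoli theorem on the compact subsets of $X$.'' In other words, the paper works directly: identify $a\mB_1 a$ with (a multiplicative perturbation of) the $d_L$-Lipschitz functions restricted to the compact support $K_a$ of $a$, read off $\locflat{a}$ and boundedness of the quotient by hand, and for total boundedness invoke Arzel\`a--Ascoli on $K_a$ directly (equicontinuity is the Lipschitz bound, pointwise boundedness is controlled after quotienting by the locally flat directions).

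Your plan to reduce to Rieffel's unital criteria is not wrong, but it is more circuitous: Rieffel's Theorem~1.8 is itself an Arzel\`a--Ascoli argument packaged for a unital order-unit space, so you are proposing to first translate the local data $(a\mB_1 a, \locflat{a})$ into a unital Lip-type pair on some unitisation of the hereditary subalgebra, verify that this translation is isometric (or at least bi-Lipschitz) at the level of quotient norms, and then quote Rieffel. The obstacle you flag --- making the cutoff/unitisation dictionary precise and checking that the localised seminorm inherits the required lower-semicontinuity --- is exactly the overhead your route incurs over the paper's. In the commutative case everything already lives on the compact set $K_a$, so applying Arzel\`a--Ascoli there directly is shorter.

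One point where your outline goes astray: your converse for the semi-bounded bullet reads ``if $(X, d_L)$ were not an extended metric --- i.e.\ $d_L$ were genuinely degenerate, taking value $0$ on distinct points.'' But under the paper's standing hypotheses $d_L$ is always a genuine extended metric on $\States$ and hence on $X$, so that failure mode never occurs. The real content of the ``semi'' conditions is the finite-dimensionality of $\locflat{a}$, i.e.\ that each compact $K_a$ meets only finitely many $d_L$-components; you identified this correctly in the forward direction, and the converse should be organised around the same dichotomy rather than around non-degeneracy of $d_L$. (A minor slip in the same paragraph: the condition $I_a a = a$ forces $I_a \equiv 1$ on $\{a>0\}$, not $a \equiv 1$ on an open set.)
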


The proof of the first two points above is a straightforward calculation from the definitions. The last two points are a fairly simple application of the Arzela-Ascoli theorem on the compact subsets of $X$.

Note that if $(\A,L)$ is locally semi-bounded, then given any $a \in \cA$ and states $\phi, \psi$ with $\phi(a) = \psi(a) = 1$, then $d_L(\phi, \psi) = \infty$ if and only if $\phi$ and $\psi$ disagree on $\locflat{a}$, otherwise $d_L(\phi, \psi) \leq 2 \text{diam}\left(a\mB_1a / \locflat{a}\right)$. So while the weakest point of Definition \ref{defnBoundedConsistent} allows for jointly localised states to be infinitely distant, this only occurs for a finite dimensional collection of states. In general, we have the following theorem.

\begin{thm}
Let $(\A,L)$ be an extended complete spectral metric space. Then $(\A,L)$ is
\begin{itemize}
\item locally bounded, if and only if $d_L$ is bounded on $F$,
\item locally semi-bounded, if and only if $d_L$ restricted to $F$ takes values in a set of the form $[0, C(F)] \cup \{\infty\}$ for some $C(F) \in (0, \infty)$, and for each $\phi \in F$, those states in $F$ which are infinitely distant from $\phi$ are contained in a finite dimensional convex set,
\item locally consistent, if and only if the restriction of $d_L$ to $F$ induces the restriction of the weak-$*$ topology to $F$,
\item locally semi-consistent, if and only if for each $\phi \in F$, the restriction of $d_L$ to $F_\phi := \{\psi \in F; ~ d_L(\phi, \psi)<\infty\}$ induces the restriction of the weak-$*$ topology on $F_\phi$, and those states in $F$ which are infinitely distant from $\phi$ are contained in a finite dimensional convex set,
\end{itemize}
for each non-empty weak-$*$ compact face $F$ of $\States$.
\end{thm}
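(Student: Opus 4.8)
The plan is to transfer each of the four clauses to the correspondingly named clause of Definition~\ref{defnBoundedConsistent}, using as a bridge the ``cut--off faces'' $F_a := \{\phi \in \States;~ \phi(a) = 1\}$ for $a \in \cA$. First I would verify the dictionary: each $F_a$ is a non-empty weak-$*$ compact face of $\States$ (weak-$*$ closed and compact because $0 \le a \le I$ forces $\|\phi\| = 1$ whenever $\phi(a) = 1$, so $F_a$ lies in the weak-$*$ compact quasi-state space and is cut out there by a weak-$*$ closed condition; a face because $\phi(a) \le 1$ for every state), and conversely every non-empty weak-$*$ compact face $F$ is contained in some $F_a$. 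For the latter I would use the standard correspondence between weak-$*$ closed faces and closed projections to write $F = \{\phi \in \States;~ \phi(p) = 1\}$ for a closed $p$, observe that weak-$*$ compactness of $F$ makes $p$ a \emph{compact} projection so that $p \le c$ for some $c \in \saA$ with $0 \le c \le I$, note $c \ge p$ forces $cp = p$, and set $a := g(c) \in \cA$ for any continuous $g:[0,1]\to[0,1]$ with $g(1)=1$ vanishing near $0$; then $ap = p$ and $F \subseteq F_a$. Since each $F_a$ is itself a weak-$*$ compact face, a property holds for all weak-$*$ compact faces if and only if it holds for all $F_a$, $a \in \cA$ --- which is precisely the quantifier of Definition~\ref{defnBoundedConsistent}.

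The computational core is the identity $d_L(\phi,\psi) = \sup_{c \in a\mB_1 a}|\phi(c)-\psi(c)|$ for $\phi,\psi \in F_a$. Indeed, $\phi(a) = 1$ with $0 \le a \le I$ gives $\phi((I-a)x) = 0$ for all $x$ by Cauchy--Schwarz, hence $\phi(x) = \phi(ax) = \phi(xa)$ and $\phi(axa) = \phi(x)$ for all $x \in \samA$ (in particular $\phi(a^2) = 1$); thus $\phi(b)-\psi(b) = \phi(aba)-\psi(aba)$ for $b \in \samA$, and since $a\mB_1 a = \{aba;~b\in\mB_1\}$ and $d_L(\phi,\psi) = \sup_{b\in\mB_1}|\phi(b)-\psi(b)|$, the identity follows. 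Writing $p$ for the closed projection of $F_a$, the equalities $\phi(a^2)=1=\phi(p)$ and $I-a \ge 0$ give $p(I-a)p = 0$, hence $ap = pa = p$, so $\phi(aba) = \phi(pbp)$ on $F_a$; this exhibits $(F_a,d_L)$ as the ``Rieffel metric space'' of the unital von Neumann algebra $p\evnA p$ with the set $\{pbp;~L(b)\le1\}$. I would also record $\locflat{a} \supseteq \RR a^2$ always (since strict lower semicontinuity of $L$ applied to the approximate identity with $L(I_n)\to0$ gives $L(I) = 0$, so $a^2 = a\cdot I\cdot a \in a\mB_R a$ for every $R$).

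Now the four clauses. For $\phi,\psi \in F_a$: if $z = ab_R a$ with $L(b_R)\le R$ then $|\phi(z)-\psi(z)| = |\phi(b_R)-\psi(b_R)| \le R\,d_L(\phi,\psi)$, so $d_L(\phi,\psi)<\infty$ forces agreement on $\locflat{a}$; conversely, when $a\mB_1a/\locflat{a}$ is bounded, agreement on $\locflat{a}$ forces $|\phi(c)-\psi(c)| \le 2\operatorname{diam}(a\mB_1a/\locflat{a})$ by subtracting a near-optimal element of $\locflat{a}$ from each $c$. Hence in the locally bounded and locally consistent cases ($\locflat{a} = \RR a^2$, on which all states of $F_a$ agree), $d_L$ is finite and bounded by $2\operatorname{diam}(a\mB_1a/\locflat{a})$ on $F_a$; in the semi-cases ($\locflat{a}$ finite-dimensional) the states of $F_a$ split into weak-$*$ closed classes ``agree on $\locflat{a}$'' with internal diameter bounded likewise, distinct classes being mutually infinitely distant, and these classes sit inside a bounded piece of the finite-dimensional space $(\locflat{a}/\RR a^2)^*$ --- the ``finite-dimensional convex set'' of the statement. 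For the reverse implications I would argue contrapositively: if $\locflat{a}\supsetneq\RR a^2$, the face generated by a compactly supported state detecting some $z \in \locflat{a}\setminus\RR a^2$ together with any $\phi \in F_a$ is weak-$*$ compact yet has $d_L$ taking the value $\infty$; likewise an unbounded quotient $a\mB_1a/\locflat{a}$ feeds unbounded $d_L$ into a suitable weak-$*$ compact face. Finally, the weak-$*$ topology clauses follow by running the proof of Rieffel's Theorem~1.8 of \cite{Rieffel1998} (see also \cite{Rieffel2003}) inside the unital picture above: total boundedness of $\{pbp;~L(b)\le1\}/\RR p$ is equivalent, via Arzel\`a--Ascoli applied to the affine weak-$*$ functions $c \mapsto \widehat c$ on the weak-$*$ compact $F_a$, to $d_L$ inducing the weak-$*$ topology on $F_a$ (respectively on each finite-distance class).

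The delicate points are (i) the functional-calculus surgery in the first paragraph producing an honest compactly supported bump above a prescribed compact closed projection, and (ii) adapting Rieffel's totally-bounded $\Leftrightarrow$ weak-$*$-metrizable argument to the non-unital, genuinely extended setting: one must carefully identify the quotient \emph{seen} by $d_L$ on $F_a$, namely $\{pbp;~L(b)\le1\}/\RR p$, with $a\mB_1a/\locflat{a}$ (they coincide only after using $ap = pa = p$ and $p(I-a)p = 0$), and show that the finitely many extra flat directions of $\locflat{a}$ behave exactly as isolated infinite-distance components --- which is where the correct choice of auxiliary faces in the reverse implications is essential.
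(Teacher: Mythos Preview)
Your strategy aligns with what the paper itself indicates: the paper \emph{omits} the proof, remarking only that it is ``a fairly laborious generalisation of the proofs of Proposition 1.6 and Theorem 1.8 of \cite{Rieffel1998}, or Lemma 2.4 of \cite{Latremoliere2007}, combined with the fact that for any weak-$*$ compact face $F$ of $\States$ there exists some $a \in \cA$ such that $\phi(a) = 1$ for all $\phi \in F$'' (the latter via Proposition 3.54 of \cite{AlfsenShultz2001} and functional calculus as in Lemma~\ref{lemmaCompactsDense}). Your bridge through the cut-off faces $F_a$, the Cauchy--Schwarz identity $\phi(\cdot)=\phi(a\cdot a)$ on $F_a$, and the subsequent reduction to Rieffel's unital argument are exactly this programme spelled out, so the approach is the same.

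One caution on a point you yourself flag as delicate: the identification of $a\mB_1 a/\locflat{a}$ with $\{pbp;~L(b)\le 1\}/\RR p$ is only the right target in the non-semi cases; in the semi cases the kernel of the pairing map on $F_a$ is the full $\locflat{a}$, not $\RR p$, and the quotient you want on the $p\evnA p$ side is by the image of $\locflat{a}$ under $b\mapsto pbp$. Your sketch already handles this correctly at the level of $a\mB_1 a$ (splitting $F_a$ into classes by agreement on $\locflat{a}$), so just be sure the von Neumann picture is phrased consistently when you write it out.
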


We omit the proof, because it is not particularly enlightening and can be found in parts scattered throughout the literature. It is a fairly laborious generalisation of the proofs of Proposition 1.6 and Theorem 1.8 of \cite{Rieffel1998}, or Lemma 2.4 of \cite{Latremoliere2007}, combined with the fact that for any weak-$*$ compact face $F$ of $\States$ there exists some $a \in \cA$ such that $\phi(a) = 1$ for all $\phi \in F$ (which follows from Proposition 3.54 of \cite{AlfsenShultz2001} on the unitisation of $\A$, combined with some simple functional calculus arguments like in the proof of Lemma \ref{lemmaCompactsDense}).

Besides the fact that Definition \ref{defnBoundedConsistent} recaptures the commutative setting, it is also general enough to allow for some infinitely distant states while still restrictive enough to hopefully be useful (see the next subsection).

In particular, we claim that the example of Cagnache et. al. in \cite{Cagnache2011} is locally semi-consistent. The $C^*$-algebra $\A$ considered in \cite{Cagnache2011} is isomorphic to the algebra of compact operators on a separable Hilbert space, for which $\cA$ is exactly the set of positive norm one finite rank elements. Given any $a \in \cA$, we thus find that $a \mB_1 a$ is a finite dimensional convex set. It is then a general result that the quotient of $a \mB_1 a$ by its largest real vector subspace is a compact convex set, so the example of \cite{Cagnache2011} is indeed locally semi-consistent.

The setting of locally semi-consistent extended complete spectral metric spaces will be explored further in future work. Some preliminary investigations have shown such spaces exhibit good compactness properties with respect to the strict topology. In particular, we have the following mean ergodic-style conjecture, the proof of which is mostly finalised and which will be presented in said future work.

\begin{conj}\label{conjMeanErgodic}
Suppose $(\A,L)$ is a locally semi-consistent extended complete spectral metric space, and $f: \A \to \mA$ is a relatively proper $*$-homomorphism such that there exists some $\Lambda \leq 1$ with $L(T(b)) \leq \Lambda L(b)$ for all $b \in \mB$. Then for any $b \in \mB$, the sequence
\[\left(b_n := \frac{1}{n+1}\sum_{i=0}^{n} T^i(b)\right)_{n=0}^\infty\]
converges strictly to some $\hat{b}$ with $T(\hat{b}) = \hat{b}$. If $\Lambda <1$, then $L(\hat{b})=0$.
\end{conj}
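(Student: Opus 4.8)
The plan is to establish strict convergence of the Ces\`aro averages $b_n$ via a mean-ergodic-type argument adapted to the strict topology, exploiting the local semi-consistency hypothesis to extract compactness. First I would fix some $a \in \cA \cap \saA$; by Lemma \ref{lemmaStrictOnBounded} it suffices to control the seminorms $b \mapsto \|a(b_n - \hat b)\|$ and $\|(b_n - \hat b)a\|$. The key observation is that since $L(T^i(b)) \leq \Lambda^i L(b) \leq L(b)$ for all $i$, every $b_n$ lies in $\mB_{L(b)}$, so after compressing by $a$ the whole sequence $(a b_n a)_{n}$ sits inside the fixed set $a \mB_{L(b)} a$. Rescaling, this is $L(b)$ times $a\mB_1 a$, and by local semi-consistency the image of $a\mB_1 a$ in the quotient by the finite-dimensional space $\locflat{a}$ is totally bounded, hence (being also complete, as a closed bounded subset of a finite-codimension quotient — here one uses strict closedness of $\mB_R$ and completeness) norm-compact. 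I would thus argue that $(ab_n a)_n$ is norm-precompact modulo $\locflat{a}$, and separately handle the $\locflat{a}$-component, which is finite dimensional and on which the averaging is a finite-dimensional linear recursion whose Ces\`aro means converge by the classical finite-dimensional mean ergodic theorem (the operator induced by $T$ on the finite-dimensional space $a \locflat{a} a$-part has all iterates bounded, so its Ces\`aro averages converge to the projection onto its fixed subspace).

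Next I would promote precompactness to genuine convergence. The standard mean-ergodic mechanism is that $b - T(b) = b_1' - \cdots$ telescopes, so $\frac{1}{n+1}\sum_{i=0}^n T^i(c) \to 0$ for any $c$ in the \emph{range} of $\mathrm{Id} - T$, while $\frac{1}{n+1}\sum_{i=0}^n T^i(c) = c$ identically for fixed $c$; the conclusion follows from a decomposition of $b$ (or of a limit point of the averages) into a fixed part plus something in the closure of $\Ran(\mathrm{Id} - T)$. Concretely, I would take a strict limit point $\hat b$ of $(b_n)$ — which exists by the precompactness established above — show $T(\hat b) = \hat b$ using strict continuity of $T$ on bounded sets (Theorem \ref{thmLanceMorphism}) together with the identity $T(b_n) - b_n = \frac{1}{n+1}(T^{n+1}(b) - b) \to 0$ strictly (again legitimate since $T^{n+1}(b)$ is bounded in norm by $\|b\|$ and we only test against $a \in \cA$, where $\|a(T^{n+1}(b) - b)\| \leq \|a\| (\|b\| + \|b\|)/(n+1)$ — more carefully one notes the averaged difference is genuinely $O(1/n)$), and finally show all limit points coincide by the usual splitting $b = \hat b + (b - \hat b)$ with $b - \hat b$ approximable strictly by elements of $\Ran(\mathrm{Id} - T)$, on which the averages vanish in the limit.

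Finally, for the last sentence: if $\Lambda < 1$, then from $L(\hat b) = \lim$-type reasoning I would use lower semicontinuity of $L$ in the strict topology on $\samA$ (established in Section \ref{secMetricSpaces}) together with $L(b_n) \leq \frac{1}{n+1}\sum_{i=0}^n \Lambda^i L(b) \leq \frac{L(b)}{(n+1)(1-\Lambda)} \to 0$ to conclude $L(\hat b) \leq \liminf L(b_n) = 0$. This part is short once the convergence is in hand.

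The main obstacle I anticipate is the step from local semi-consistency to an honest compactness statement for $(ab_na)_n$: one must show that $a\mB_{R}a$ is strictly closed (inherited from strict closedness of $\mB_R$ and continuity of $b \mapsto aba$) and that the quotient $a\mB_1 a / \locflat{a}$, which is assumed totally bounded in quotient norm, is in fact \emph{complete} so that total boundedness upgrades to compactness — and then to transfer that quotient-level compactness back to a usable statement about the sequence itself, carefully tracking the finite-dimensional $\locflat{a}$-direction where the dynamics of $T$ need not be contractive and must be handled by the elementary finite-dimensional ergodic theorem rather than a contraction argument. Reconciling the ``$\Lambda \le 1$'' generality (no contraction, so no Banach fixed point shortcut) with the strict-topology bookkeeping is where the real work lies; the contractive case $\Lambda < 1$ is comparatively easy since then $T$ is eventually contractive on $\mB$-scales and one could nearly quote Theorem \ref{thmExistenceSelfSimilarState}-style reasoning.
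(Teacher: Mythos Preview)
The paper does not actually prove this statement: it is explicitly presented as a \emph{conjecture}, with the author remarking that ``the proof of which is mostly finalised and which will be presented in said future work.'' There is therefore no proof in the paper against which to compare your proposal.

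That said, since you have sketched an argument, two points are worth flagging. First, your handling of the $\locflat{a}$-component assumes that $T$ (or the compression $b \mapsto aT(b)a$) induces a well-defined linear map on the finite-dimensional space $\locflat{a}$, so that a ``finite-dimensional mean ergodic theorem'' applies; but $T$ need not preserve $\locflat{a}$, nor does compression by $a$ commute with $T$, so this recursion is not obviously available. Second, the standard mean-ergodic splitting $b = \hat b + (b-\hat b)$ with $b-\hat b$ in the strict closure of $\Ran(\mathrm{Id}-T)$ requires justification in the strict topology, which is neither a Banach-space norm nor (on unbounded sets) metrizable; the usual Hilbert/reflexive arguments do not transfer directly, and you would need to explain why the compactness in $a\mB_1 a/\locflat{a}$ together with the finite-dimensional residue suffices to close the loop. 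Your treatment of the $\Lambda<1$ clause via strict lower semicontinuity of $L$ is correct and matches the tools the paper sets up in Section~\ref{secMetricSpaces}.
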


\subsection{Compactness}\

We make use of the same notation as in Section \ref{secStrictlyContractive}.

The notion of compactness for closed projections in $\evnA$ was introduced by Akemann in \cite{Akemann1971} and has been studied by many, including recent work of Akemann and Bice \cite{AkemannBice2015}, and Brown \cite{Brown2018}.

\begin{defn}[\cite{Akemann1971}, Definition II.1]
A closed projection $p \in \cProj$ is called compact if there exists $a \in \saA$ with $0 \leq a \leq I$ and $ap = p$.
\end{defn}

There are a variety of other characterisations of compactness throughout the literature, such as weak-$*$ compactness of the face $F(p) := \{\phi \in \States; ~\phi(p) = 1\} \subset \States$ generated by $p$ (see \cite{Akemann1971}). We introduce the following similar definition for states.

\begin{defn}
A state $\phi \in \States$ is called \emph{compactly supported} if it attains its norm on $\A$. I.e. there exists $a \in \A$ with
\[\phi(a) = ||a||.\]
The set of compactly supported states is denoted $\cStates$.
\end{defn}

By taking self-adjoint part and using functional calculus, we may equivalently require the above norming element $a$ to be such that $0 \leq a \leq I$. One may easily verify that $\phi$ is compactly supported if and only if $\csupp(\phi)$ is compact. Kadison's transitivity theorem implies that any pure state is compactly supported (along with convex combinations of equivalent pure states). We will make use of the following quick lemma for discussion. Recall that an approximate unit $\left(I_n\right)_{n \in N} \subset \A$ is called \emph{almost idempotent} if $I_mI_n = I_n$ for all $m \geq n$. Note that $\sigma$-unital $C^*$-algebras always have almost idempotent - even commuting - sequential approximate units, as follows from a functional calculus argument and the existence of a strictly positive element.

\begin{lemma}\label{lemmacStatesDense}
Given any $\phi \in \cStates$, there exists $a \in \cA$ with $\phi(a) = 1$. If $\A$ contains an almost idempotent approximate unit, then $\cStates \subset \States$ is weak-$*$ dense subset.
\end{lemma}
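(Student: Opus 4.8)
The plan is to prove the two assertions in turn. For the first assertion, let $\phi \in \cStates$, so by definition there exists $a \in \saA$ with $0 \leq a \leq I$ and $\phi(a) = \|a\| = 1$. This $a$ is almost what we want, except we still need to produce an $I_a \in \saA$ with $0 \leq I_a \leq I$ and $I_a a = a$. First I would apply functional calculus: since $\phi(a) = 1 = \|a\|$, the spectral measure of $a$ under $\phi$ is concentrated at $1$, but more to the point I can replace $a$ by $h(a)$ where $h:[0,1]\to[0,1]$ is continuous, equals $0$ near $0$, and equals $1$ on a neighborhood of $1$ (say on $[1/2,1]$); then $h(a) \in \saA$ by non-unital functional calculus, $0 \leq h(a) \leq I$, $\|h(a)\| = 1$, and $\phi(h(a)) = 1$ because $\phi$ concentrates $a$'s spectrum at $1$. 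Now choose a second function $I_0:[0,1]\to[0,1]$, continuous, vanishing at $0$, and identically $1$ on the support of $h$; then $I_0(a) \in \saA$, $0 \leq I_0(a) \leq I$, and $I_0(a)h(a) = h(a)$ since $I_0 \equiv 1$ wherever $h \neq 0$. Setting $I_a = I_0(a)$ exhibits $h(a) \in \cA$, and $\phi(h(a)) = 1$, giving the first claim.

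For the density assertion, suppose $\A$ has an almost idempotent approximate unit $(I_n)_{n\in N}$, i.e. $I_m I_n = I_n$ for $m \geq n$. Let $\phi \in \States$ be arbitrary; I want to weak-$*$ approximate $\phi$ by compactly supported states. The natural candidates are $\phi_n(b) := \phi(I_n b I_n)/\phi(I_n^2)$ (or, to stay manifestly positive and normalized, $\phi_n(b) := \phi(I_n b I_n)/\phi(I_n^2)$, which is a state provided $\phi(I_n^2) \neq 0$; this holds for large $n$ since $\phi(I_n^2) \to 1$). The key point is that $\phi_n$ is compactly supported: because $I_{n+1} I_n = I_n$, we have $I_{n+1}(I_n b I_n) = I_n b I_n$, so $\phi_n(I_{n+1}) = \phi(I_n I_{n+1} I_n)/\phi(I_n^2) = \phi(I_n^3)/\phi(I_n^2)$... here I need to be slightly more careful and instead observe directly that $\phi_n$ attains its norm at $I_{n+1}$: indeed $\phi_n(I_{n+1}) = \phi(I_n I_{n+1} I_n)/\phi(I_n^2)$, and using almost idempotence $I_n I_{n+1} = I_n$ and $I_{n+1} I_n = I_n$ twice gives $I_n I_{n+1} I_n = I_n^2$, hence $\phi_n(I_{n+1}) = 1 = \|I_{n+1}\|$. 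So $\phi_n \in \cStates$. Then for fixed $b \in \A$, $\phi_n(b) = \phi(I_n b I_n)/\phi(I_n^2) \to \phi(b)$ as $n \to \infty$, since $I_n b I_n \to b$ in norm (approximate unit) and $\phi(I_n^2) \to 1$. Hence $\phi_n \wsto \phi$, and $\cStates$ is weak-$*$ dense.

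The main obstacle is the bookkeeping around whether $\phi_n$ is genuinely a state and genuinely compactly supported — in particular verifying that $I_n I_{n+1} I_n = I_n^2$ from the almost-idempotence relation $I_m I_n = I_n$ (for $m \geq n$), which requires applying the relation on both sides and using self-adjointness of the $I_n$, and confirming the normalization $\phi(I_n^2)$ is eventually nonzero and tends to $1$. None of this is deep, but it is the only place where the hypothesis is used in an essential way, so it deserves care. An alternative, perhaps cleaner, route for the density part is to first reduce to pure states: pure states are weak-$*$ dense in $\States$ by a standard Hahn--Banach/Krein--Milman argument (the weak-$*$ closed convex hull of the pure states is all of $\States$ when $\A$ is... — actually this needs $\A$ unital or a quasi-state space argument), and pure states are compactly supported by Kadison transitivity as already noted; but this reduction has its own subtleties in the non-unital case, so I would present the explicit $I_n b I_n$ construction as the primary argument.
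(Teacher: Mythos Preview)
Your proposal is correct and follows essentially the same route as the paper. For the first assertion both you and the paper push the norming element through functional calculus to get something vanishing near $0$ (hence in $\cA$) while preserving $\phi(\cdot)=1$ via the GNS/spectral-measure observation; for the second, both construct the sandwiched states $\phi_n(\cdot)=\phi(I_n\cdot I_n)/\phi(I_n^2)$ and use almost idempotence to see $\phi_n(I_m)=1$ for $m\geq n$. The only cosmetic point is that ``$n+1$'' presumes a sequential index set, so in writing it up just take any $m>n$ in the directed set (the paper phrases it as ``$\phi_n(I_m)=1$ for any $m\geq n$'').
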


\begin{proof}
For the first statement, if $\phi(a) = 1$ for $a \in \A$ with $0 \leq a \leq I$, then $\phi(g(a)) = 1$ for any continuous $g:[0,1] \to [0,1]$ which fixes $1$ (which can be seen by passing to the GNS construction of $\phi$, for example). If $g$ is chosen to vanish in some neighbourhood of $0$, then $g(a) \in \saA$ and $0 \leq g(a) \leq I$ (by the non-unital continuous functional calculus), and there exists some continuous $f:[0,1] \to [0,1]$ vanishing at $0$ with $fg=g$, so $f(a) \in \saA$, $0 \leq f(a) \leq I$ and $f(a)g(a) = g(a)$, hence $g(a) \in \cA$.

Suppose $\A$ has an almost idempotent approximate unit $\left(I_n\right)_{n \in N}$. Given any $\phi \in \States$, one must have $\phi(I_n^2) \to 1$. After passing to a subnet is necessary, we can construct a net of states
\[\phi_n(\cdot) = (\phi(I_n^2))^{-1}\phi(I_n \cdot I_n)\]
It is easy to verify that $\phi_n \wsto \phi$, and that $\phi_n(I_{m}) = 1$ for any $m \geq n$, so $\cStates \subset \States$ is weak-$*$ dense
\end{proof}

One is naturally lead to wonder if $\phi \in \cStates$ or $\csupp(\phi)$ is compact for some/all self-similar states associated with a strictly contractive dual IFS.

In the strictly contractive setting, we have already observed that the set $K_\mathbb{F} = \{\phi_\omega; ~\omega \in \infCode{k}\}$ is ``the" $d_L$-compact attractor of the classical strictly contractive IFS $\mathbb{F}^*$ on the classical (extended) complete metric space $(\States, d_L)$ (up to the ever-present issues of having to work with an extended distance function), and that $\phi_\pi$ is contained in the weak-$*$ closed convex hull of $K$ for any $\pi \in \osimp{k}$ (Corollary \ref{corPushforward}).

However, $d_L$-compactness of $K_\mathbb{F}$ does not (necessarily) imply that $\phi_\pi$ is compactly supported for any $\pi \in \osimp{k}$. Indeed, this would be equivalent to the statement that $K_\mathbb{F}$ be contained in a weak-$*$ compact \emph{face} of $\States$ (see \cite{Akemann1971}). Alternatively, that there exists some $a \in \A$ with $0 \leq a \leq I$ with the property that $\phi_\omega(a) = 1$ for all $\omega \in \infCode{k}$. There are easy examples for which compactness of a set of states does not imply containment in a weak-$*$ compact face even in the commutative case, such as the singleton set containing a Gaussian probability measure on $\RR$.

The situation is even worse: we don't even know that each $\phi_\omega$ is compactly supported. In the commutative setting each $\phi_\omega$ is in fact pure, which follows from (weak-$*$ or $d_L$)-closedness of the set of pure states and the fact that adjoints of relatively proper $*$-homomorphisms preserve purity. In the noncommutative setting, closedness of the set of pure states is known to fail (spectacularly) for some $C^*$-algebras (see \cite{Glimm1961}, \cite{Archbold1989}, \cite{Dixmier1977}), and purity-preservation of adjoints of $*$-homomorphisms is subtle (see Corollary 5.8 of \cite{Stormer1968} in conjunction with \cite{BunceChu1998}).

Despite these complications, we make the following conjectures for strictly contractive dual IFS and illustrate some ideas towards their verification.

\begin{conj}
Suppose $\phi_0 \in \cStates$ satisfies the requirements of Section \ref{secStrictlyContractive}. Then if $(\A,L)$ is locally consistent (resp. semi-consistent), each $\phi_\omega$ is pure (resp. compactly supported).
\end{conj}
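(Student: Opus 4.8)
\emph{The plan.} Recall that a normal state $\phi$ is compactly supported precisely when $\csupp(\phi)$ is a compact projection, equivalently when $\phi$ lies in some weak-$*$ compact face $F(p)=\{\chi\in\States:\chi(p)=1\}$ of $\States$: take $p=\csupp\phi$; conversely if $\phi\in F(p)$ with $p$ compact then $\csupp\phi\le p$, and a closed projection dominated by a compact one is again compact (if $a\in\saA$, $0\le a\le I$, $ap=p$ and $r\le p$ closed, then $ar=apr=pr=r$). So I would first reduce the compactly-supported half to the following: there is a compact projection $q$ with $\csupp(\phi_0)\le q$ which is $\mathbb{F}$-invariant in the sense $f_i(q)\ge q$ for every $i$. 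Note $\uppreimage{(f_i)}$ is the lower adjoint of $f_i$ on $\mathcal P(\evnA)$, i.e.\ $\uppreimage{(f_i)}(p)\le q\iff p\le f_i(q)$ (one direction is $f_i(\uppreimage{(f_i)}(p))\ge p$ and monotonicity, the other is the definition of the meet); from this one reads off that $\uppreimage{(f_i)}$ preserves joins, that $\uppreimage{(f_i\circ f_j)}=\uppreimage{(f_j)}\circ\uppreimage{(f_i)}$, and that $\mathbb{F}$-invariance of $q$ is equivalent to $\bigvee_{i}\uppreimage{(f_i)}(q)\le q$. Granting such a $q$: the weak-$*$ compact face $F(q)$ contains $\phi_0$ and is invariant under each $f_i^*$ (if $\chi(q)=1$ then $f_i^*(\chi)(q)=\chi(f_i(q))\ge\chi(q)=1$), hence contains the whole $d_L$-orbit $\{f_{\omega_1}^*\circ\cdots\circ f_{\omega_M}^*(\phi_0)\}$ of $\phi_0$ under the classical IFS $(f_1^*,\dots,f_k^*)$ on $(\States,d_L)$; being weak-$*$ closed it contains every $d_L$-limit of orbit points (recall $d_L$-convergence implies weak-$*$ convergence, Lemma~\ref{lemmadLtoImplieswsto}), in particular every $\phi_\omega$, $\omega\in\infCode{k}$. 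Thus each $\phi_\omega$ lies in a weak-$*$ compact face and is compactly supported.

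The construction of $q$ is the crux, and is the noncommutative counterpart of Hutchinson's observation that a strictly contractive classical IFS on a well-behaved locally compact metric space admits a compact forward-invariant set containing the support of the initial point. I would take $q$ to be the closure of the forward-invariant hull
\[
\tilde q \;=\; \bigvee_{M\ge 0}\ \bigvee_{\omega\in\Code{k}{M}}\uppreimage{(f_\omega)}(\csupp\phi_0),
\]
where the $M=0$ term is $\csupp\phi_0$ itself. Using join-preservation of the $\uppreimage{(f_i)}$ one gets $\uppreimage{(f_i)}(\tilde q)\le\tilde q$, hence $\tilde q\le f_i(\tilde q)$; since $f_i$ preserves closed projections (Lemma~\ref{lemmaRelProperClosed}), $f_i(\overline{\tilde q})\ge f_i(\tilde q)\ge\tilde q$ is closed and so dominates $\overline{\tilde q}$, giving $\uppreimage{(f_i)}(\overline{\tilde q})\le\overline{\tilde q}$. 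Thus $q:=\overline{\tilde q}$ is automatically $\mathbb{F}$-invariant with $\csupp\phi_0\le q$, and \emph{the only thing in question is that $q$ is compact} --- unconditionally one is forced back to $q=I$, and strict contractivity and local semi-consistency are exactly what should allow one to shrink it. Here I would argue: (i) strict contractivity confines the orbit --- telescoping with the contraction estimate behind Lemma~\ref{lemmaLip} (each $f_i^*$ is $d_L$-Lipschitz with constant $\le\Lambda$) puts every orbit point, and every point of $K_\mathbb{F}=\{\phi_\omega\}$, inside the closed $d_L$-ball $\overline B$ of radius $C/(1-\Lambda)$ about $\phi_0$; (ii) $\overline B$ is weak-$*$ closed, and by the lemma that a state at finite $d_L$-distance from $\phi_0\in\States$ is itself in $\States$, $\overline B$ is weak-$*$ compact, and the joinands of $\tilde q$ are dominated by closed support projections of orbit points, which cluster $d_L$-rapidly onto the $d_L$-compact set $K_\mathbb{F}$; (iii) local semi-consistency --- in the form of the theorem identifying $d_L$ on weak-$*$ compact faces with the weak-$*$ topology (and boundedness of finite distances there) --- is then brought to bear to promote this ``$K_\mathbb{F}$ together with its confined orbit, and the projections they generate'' into a single weak-$*$ compact face, whose generating projection dominates $\tilde q$ and serves as $q$. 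This last promotion is the main obstacle: mere weak-$*$ compactness of $K_\mathbb{F}$ does not suffice (a single Gaussian on $\RR$ is weak-$*$ compact, sits in a common finite-$d_L$-distance class with $\delta_0$, yet lies in no weak-$*$ compact face), so local semi-consistency must genuinely be exploited together with the contractive structure of the orbit, presumably through continuity of $\omega\mapsto\csupp\phi_\omega$ into the closed projections with a suitable compactness-friendly topology (cf.\ \cite{AkemannBice2015}, \cite{Brown2018}).

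For the pure half I would bootstrap from the above. In the locally consistent case $\locflat{a}$ is one-dimensional for each $a\in\cA\cap\saA$, so $d_L$ is finite on every weak-$*$ compact face (two states evaluating to $1$ on a bump $a$ agree on $\RR a^2=\locflat{a}$, hence are at finite distance by the locally-semi-bounded criterion); consequently the face $F(q)$ from above, with $d_L$, is a genuine compact metric space carrying the weak-$*$ topology, on which the restrictions $f_i^*|_{F(q)}$ form a classical strictly contractive IFS (ratio $\le\Lambda$) whose code points are the $\phi_\omega$ and for which $\phi_{\omega|_M}$ is the fixed point of the contraction $f_{\omega|_M}^*$. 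To upgrade compact support to purity I would then show that the pure states lying in $F(q)$ form a $d_L$-closed (equivalently weak-$*$ closed) subset and that each $f_i^*$ sends pure states of $F(q)$ to pure states --- the latter via the structure of adjoints of relatively proper $*$-homomorphisms --- which together give purity of $\phi_{\omega|_M}=\lim_n(f_{\omega|_M}^*)^n(\chi)$ for any pure $\chi\in F(q)$, and hence of $\phi_\omega=\lim_M\phi_{\omega|_M}$. This is the secondary obstacle: closedness of the pure states and their preservation by $f_i^*$ are automatic commutatively but genuinely delicate noncommutatively (closedness of pure states can fail badly, and purity-preservation of adjoints of $*$-homomorphisms is subtle), so one must verify that these properties survive on the relevant weak-$*$ compact face under the stated hypotheses.
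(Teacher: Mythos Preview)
The statement you are attempting is labelled a \emph{conjecture} in the paper, not a theorem; the paper offers no proof, only a heuristic paragraph on how one might proceed. Your proposal is likewise a sketch with explicitly flagged obstacles, so there is no ``paper's proof'' to match against---only two competing strategies to compare.

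The approaches differ substantially. The paper's heuristic works on the \emph{algebra side}: since $\phi_0\in\cStates$, there is $a\in\cA$ with $\phi_0(a)=1$, and by Cauchy--Schwarz $\phi_0(\cdot)=\phi_0(a\cdot a)$; one then studies the dynamics of $\mathbb{F}$ on $a\mB_1 a$, whose quotient by $\locflat{a}$ is totally bounded (this is precisely the content of local semi-consistency). The hope is that a mean-ergodic-style argument (the paper's Conjecture~\ref{conjMeanErgodic}) produces fixed elements in $\mA$ witnessing compact support, and that in the locally consistent case one-dimensionality of $\locflat{a}$ forces the support projection down to something minimal, giving purity. The paper also flags that genuine algebraic structure (e.g.\ a Leibniz-type condition on $L$) is probably needed, since the statement fails for mere positive maps.

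Your approach works instead on the \emph{projection/state side}: build an $\mathbb{F}$-invariant closed projection $q\ge\csupp\phi_0$ and trap the orbit---hence each $\phi_\omega$---inside the face $F(q)$. The reduction to ``find an invariant compact $q$'' is clean and correct, and your Galois-connection observations about $\uppreimage{(f_i)}$ are useful. But the step where you invoke local semi-consistency to establish compactness of $q$ is, as you yourself say, the entire difficulty, and your outline never actually \emph{uses} the defining property of local semi-consistency (total boundedness of $a\mB_1 a/\locflat{a}$) anywhere. The paper's route has the advantage of engaging directly with that definition; yours stays at the level of faces and projections, where the hypothesis has no obvious grip. Your purity argument has the same shape: you correctly isolate closedness of pure states in $F(q)$ and purity-preservation by $f_i^*$ as what is needed, but offer no mechanism connecting these to one-dimensionality of $\locflat{a}$, which is the \emph{only} additional content of local consistency over semi-consistency.

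In short: your reductions are valid and the framework is coherent, but the load-bearing steps (compactness of $q$; closedness and preservation of purity on $F(q)$) are left as black boxes that do not visibly consume the hypotheses. The paper's sketch, while equally incomplete, at least indicates concretely where local (semi-)consistency and the $*$-homomorphism structure are meant to enter.
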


\begin{conj}
Suppose $\phi_0 \in \cStates$ satisfies the requirements of Section \ref{secStrictlyContractive}.  if $(\A,L)$ is locally semi-consistent, $\phi_\pi$ is compactly supported for each $\pi \in \osimp{k}$.
\end{conj}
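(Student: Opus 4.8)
The statement amounts to showing that the closed projection $p := \csupp(\phi_\pi)$ is \emph{compact}: granting this, $\phi_\pi \in \cStates$ by the characterisation recalled just before Lemma~\ref{lemmacStatesDense}, and (using the inequality $\csupp(\phi_\omega) \le p$ established below, together with the elementary fact that a closed subprojection $q \le p$ of a compact projection $p$ is compact --- if $0 \le a \le I$ and $ap = p$ then $aq = q$) every $\phi_\omega$ lies in $\cStates$ as well.

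\textbf{Reformulation.} First I would put $p$ in a usable shape. Corollary~\ref{corDenseSubprojection} writes $p$ as the closure of the join of the $\supp(\phi_\omega)$ over periodic codes $\omega$; since $\omega \mapsto \phi_\omega$ is $d_L$-continuous, periodic codes are dense in $\infCode{k}$, and $\psi \mapsto \psi(p)$ is weak-$*$ upper semicontinuous for the closed projection $p$ (Lemma~\ref{lemmaClosedUpperSemicontinuous}), one gets $\supp(\phi_\omega) \le p$ for every $\omega \in \infCode{k}$, so the join over all of $\infCode{k}$ has the same closure. Thus the task is exactly that $\overline{\bigvee_{\omega \in \infCode{k}} \supp(\phi_\omega)}$ be compact --- the noncommutative analogue of the classical fact that the attractor $\{x_\omega : \omega \in \infCode{k}\} \subset X$ is compact, being a continuous image of the compact code space. (The invariance $p \le f_i(p)$ for each $i$, i.e.\ self-similarity of the fractal projection, is already free from the proofs of Theorems~\ref{thmSupport} and~\ref{thmSelfSimilar}; what is missing is only compactness.)

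\textbf{Strategy.} All of the hypotheses must enter here: $\dilupsup(\mathbb{F}) = \Lambda < 1$, $\phi_0 \in \cStates$ with $\max_i d_L(\phi_0, f_i^*\phi_0) \le C < \infty$, and local semi-consistency. The classical mechanism is that a strict contraction carries the (proper, closed) $d_L$-ball of radius $C/(1-\Lambda)$ about $\phi_0$ into itself, and the noncommutative replacement for ``this ball is proper, so its support projection is compact'' ought to be exactly what local semi-consistency supplies on the relevant weak-$*$ compact face. Concretely: by the argument one would use for the preceding conjecture --- which already uses $\Lambda < 1$ essentially, since relatively proper $*$-homomorphisms need not send compact projections to compact projections (a unital morphism $\mathbb{C} \to M(C_0(\mathbb{R}))$ sends the compact unit to the non-compact unit) --- each $\csupp(\phi_\omega)$ is compact. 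Now use that $K_\mathbb{F}$ is $d_L$-compact: for a finite $d_L$-$\epsilon$-net $\phi_{\omega^{(1)}}, \dots, \phi_{\omega^{(N)}} \in K_\mathbb{F}$ the finite join $q_\epsilon := \bigvee_{j} \csupp(\phi_{\omega^{(j)}})$ is compact, carried by some $a_\epsilon \in \cA$ of controlled Lipschitz seminorm (this control being precisely total boundedness of $a\mB_1 a / \locflat{a}$). On the weak-$*$ compact face $\{\psi \in \States;\ \psi(q_\epsilon) = 1\}$ the $d_L$- and weak-$*$ topologies agree on finite-distance classes, which should let one absorb every remaining $\phi_\omega$ --- within $d_L$-distance $\epsilon$ of some $\phi_{\omega^{(j)}}$ in that face --- into a mildly enlarged compact face; refining $\epsilon$ and passing to a limit over the compact base $\infCode{k}$ should deliver a single compact projection dominating every $\supp(\phi_\omega)$, hence dominating $p$.

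\textbf{Main obstacle.} The genuinely delicate step --- and the reason this is stated only as a conjecture --- is the passage from ``$K_\mathbb{F}$ is $d_L$-compact and each $\csupp(\phi_\omega)$ is compact'' to ``$\bigvee_{\omega \in \infCode{k}} \csupp(\phi_\omega)$ is compact'': one must prevent the supports $\csupp(\phi_\omega)$ from escaping to infinity along $K_\mathbb{F}$ even while the states themselves stay $d_L$-totally bounded, and this forces one to understand infinite joins of compact projections indexed continuously by $\infCode{k}$ (even finite joins of compact projections, and closures of joins of closed projections, are already subtle). It is exactly here that local semi-consistency --- through its control of how $d_L$-nearby states can differ on a weak-$*$ compact face --- has to do the work, and making that control quantitative enough to defeat the infinite join is the crux. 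A secondary obstacle, already present in the preceding conjecture, is compactness of the individual $\csupp(\phi_\omega)$.
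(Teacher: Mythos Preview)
The statement is a \emph{conjecture} in the paper; there is no proof to compare against, only a sketched strategy in the paragraph following the two conjectures. That said, your strategy and the paper's are genuinely different, and it is worth contrasting them.

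The paper works on the algebra side. Since $\phi_0 \in \cStates$, there is $a \in \cA$ with $\phi_0(a)=1$, and Cauchy--Schwarz gives $\phi_0(\cdot) = \phi_0(a\cdot a)$. One then studies the dynamics of $\mathbb{F}$ on $a\mB_1 a$: by local semi-consistency this set has totally bounded quotient by the finite-dimensional subspace $\locflat{a}$, and the hope is that a mean-ergodic result (Conjecture~\ref{conjMeanErgodic}) produces fixed elements of $\mB$ of Lipschitz seminorm zero, from which a norming element for $\phi_\pi$ can be extracted directly. In particular the paper treats both conjectures by the same mechanism, rather than bootstrapping the second from the first as you do.

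Your route works on the state-space side: reformulate via Corollary~\ref{corDenseSubprojection}, assume each $\csupp(\phi_\omega)$ is compact (the preceding conjecture), take a finite $d_L$-$\epsilon$-net in the $d_L$-compact $K_\mathbb{F}$, and try to enlarge the resulting compact face to absorb the remaining $\phi_\omega$. This is conceptually clean and makes the analogy with the classical compactness of the attractor transparent. But it runs straight into the order-theoretic pathologies the paper warns about: even your \emph{finite} join $q_\epsilon = \bigvee_j \csupp(\phi_{\omega^{(j)}})$ need not be closed (hence not compact) without something like the hypothesis of Theorem~\ref{thmJoinClosed}, and the ``mild enlargement'' step --- turning $d_L$-closeness into containment in a slightly larger weak-$*$ compact face --- is exactly where local semi-consistency would have to be made quantitative, and you have not indicated how. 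You are right that this is the crux; the paper's approach sidesteps it by never forming joins at all, instead aiming to build a single norming element from the dynamics on $a\mB_1 a$.
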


Both conjectures boil down to examining the dynamics of $\mB_1$ under $\mathbb{F}$, but only in how they relate to $\phi_0$. Given that $\phi_0$ is compactly supported, there exists some $a \in \cA$ with $\phi_0(a) = 1$. Then $\phi_0(\cdot) = \phi_0(a \cdot a)$ (by Cauchy-Schwarz), so we may alternatively study the dynamics of $\mathbb{F}$ on $\mB_1$ up to multiplying on the left and right by $a$. This hence lands in $a \mB_1 a$, a set which has (by local consistency/semi-consistency) a compact convex quotient by some finite dimensional vector subspace. The dynamics of $\mathbb{F}$ on such a set should be understandable, and allow one to come up with necessary fixed points to show each $\phi_\omega$ and $\phi_\pi$ is compactly supported. In particular, Conjecture \ref{conjMeanErgodic} provides exactly the right type of convergence (albeit for the Ces\`{a}ro averages).

In the case that $(\A, L)$ is locally consistent, purity of each $\phi_\omega$ is expected to follow from $1$-dimensionality of $\locflat{a}$ and the fact that the support projection of $\phi_\omega$ should be contained in a set related to $\locflat{a}$.

Such an endeavour must also rely on algebraic properties (otherwise the maps in $\mathbb{F}$ may as well be positive rather than $*$-homomorphisms, and the result definitely fails for general positive maps), so one may need to suppose in addition that $L$ satisfies some form of Leibniz property (see \cite{Latremoliere2013} and references therein).

\subsection{Ergodic theorems}\

Our Corollary \ref{corPushforward} is one step towards a probabilistic study of self-similar states. There are many other useful probabilistic characterisations of self-similar probability measures and sets related to classical IFS, such as the chaos game (see \cite{Barnsley1989}, \cite{BarnsleyVince2010}) and Elton's ergodic theorem (see \cite{Elton1987}). It is natural to ask if similar things might hold in the noncommutative setting.

Of note, Elton's ergodic theorem only applies on proper metric spaces (i.e. those for which all closed bounded sets are compact). So any noncommutative generalisation will require an extra point in Definition \ref{defnBoundedConsistent} which captures properness.

\subsection{Uniqueness and direct approximation of self-similar projections}\

Classically (e.g. as in \cite{Hutchinson}), one may arrive at the attractor of a strictly contractive IFS $\mathbb{G}$ on some complete metric space $(X,d)$ without any mention of self-similar states. This may be achieved through the use of the Pompeiu-Hausdorff distance, which is complete on the set of non-empty compact subsets of $X$, and is such that the Hutchinson operator, defined by
\[\mathbb{G}(K) := \bigcup_{g \in \mathbb{G}}g(K)\]
for compact $K \subset X$, is contractive.

The research presented in this article began by trying to generalise the above method to the noncommutative setting, but it was discovered that such a construction would not work for various reasons.

If one takes the seemingly reasonable generalisation of the Pompeiu-Hausdorff distance between projections $p, q \in \cProj$, to be the classical Pompeiu-Hausdorff distance (induced by $d_L$) between their associated weak-$*$-closed faces in $\States$, then one runs into issues with Glimm's theorem \cite{Glimm1961}. Namely, if $(\phi_n)_{n = 1}^\infty \subset \States$ is a sequence of pure states converging weak-$*$ to a non-pure state $\phi$, and the convergence occurs in $d_L$ also, then the sequence of weak-$*$ closed faces $\left(\{\phi_n\}\right)_{n=1}^\infty$ cannot converge to a face. Indeed, its Pompeiu-Hausdorff limit is $\{\phi\}$, which is not a face since $\phi$ is not pure.

A more careful approach, based on a dual formulation of the Pompeiu-Hausdorff distance in terms of left-ideals intersecting with $\B_1$ was attempted next, which was (probably) complete but was not flexible enough to allow for an analogue of the Hutchinson operator to be contractive (due to order-theoretic issues).

Hence the project turned to look at self-similar states instead, which has clearly worked much better. The question remains as to whether one can directly approximate self-similar projections in terms of some iterative procedure in $\cProj$.

I.e. Given an extended complete spectral metric space $(\A,L)$, can one put a complete (extended) distance function on $\cProj$ such that given any strictly contractive dual IFS $\mathbb{F}$, the map $\cProj \to \cProj$ defined by
\[p \mapsto \overline{\left(\bigvee_{i=1}^k \uppreimage{(f_i)}(p)\right)}\]
is a contraction?

Note also that uniqueness for self-similar projections does not follow from any of the theorems presented (even if $d_L$ is finite-valued), while it should follow from a direct approximation procedure.

It is quite possible that the Kuperberg and Weaver (see \cite{KuperbergWeaver2010}) approach to noncommutative metric spaces via von Neumann algebras may be more suitable for resolving the above, which has better order theoretic properties than are present in the $C^*$-algebraic framework (albeit with respect to the ``spectral" order). The main reason that the $C^*$-algebraic formulation was favoured over the von Neumann algebraic formulation in the present paper was primarily because fractals and their approximation are topological topics (which do not behave well under passing to almost everywhere equivalence), and secondly because the applications in mind (e.g. Littlewood-Paley-like decompositions for non-Abelian groups) are closer to the $C^*$-algebraic formalism.

\subsection{Applications}\

As mentioned in the introduction, our motivating goal in developing the present theory is to applications in self-similar decompositions similar to the Littlewood-Paley decompositions of function space on $\mathbb{R}^n$.

The theory of self-similar tilings related to classical IFS, as outlined in \cite{BarnsleyVince2014}, is well-developed under certain hypotheses such as requiring attractors to have non-empty interior, or the open set condition (for example, Theorem 3.8 of the aforementioned reference). These requirements have clear analogues in the noncommutative setting, so it would be beneficial to have techniques in order to verify them for particular examples.

Regarding non-empty interior, only special cases are understood even for classical IFS (see \cite{FengFeng2022} and references therein). We, therefore, do not expect to be able to make any general statements regarding the noncommutative case.

There is more that we can say regarding overlap. Suppose that we have some tracial semi-finite normal weight $\tau$ on $\evnA$ (not necessarily faithful, for example, arising as a pullback via some non-degenerate $*$-homomorphism from $\A$ into some semi-finite von Neumann algebra) which is scaled by a factor $S_i \in [0, \infty)$ under each map $f_i$ in some dual IFS $\mathbb{F}$ consisting of invertible maps. Suppose also that we have a self-similar projection $p \in \Proj$ with $\lambda(p) <\infty$. Then we find
\begin{align*}
\tau\left(p\right) &= \tau\left(\bigvee_{i=1}^k f_i^{-1}(p)\right) \\
&\leq \sum_{i=1}^k \tau\left( f_i^{-1}(p)\right) \\
&= \sum_{i=1}^k S_i^{-1} \tau\left(p\right). \\
\end{align*}
If the sum $\sum_{i=1}^k S_i^{-1}$ is equal to $1$, then we have equality
\[\tau\left(\bigvee_{i=1}^k f_i^{-1}(p)\right) = \sum_{i=1}^k \tau\left( f_i^{-1}(p)\right),\]
from which, under repeated application of Kaplansky's formula (\cite{KadisonRingroseII}, Theorem 6.1.7) and the fact that $\tau$ is tracial, we can conclude
\[\tau\left(f_i^{-1}(p) \wedge f_j^{-1}(p)\right) = 0\]
for all $i \neq j$.

A similar statement may not hold for closed self-similar projection, i.e. $p \in \cProj$ with
\[p = \overline{\left(\bigvee_{i=1}^k f_i^{-1}(p)\right)},\]
because the above equality strictly requires the closure on the right hand side (without it, the join may not be closed), and the closure of a projection may be much ``larger" than the initial projection. However, as evidenced by Theorem \ref{thmSupport} and Corollary \ref{corDenseSubprojection}, it is the closed support projections which capture intrinsic information about a dual IFS.

The above computation touches on an important fact: in applications, it is unlikely we will actually care about projections in $\evnA$, and will instead be interested in some quotient (i.e. making $\tau$ above faithful). For example, in studying PDE on Lie groups we will only be interested in the regular representations of the (reduced) group $C^*$-algebra $C^*_r(G)$ on $L^2(G)$, and many projections in the enveloping von Neumann algebra of $C^*_r(G)$ will be identified upon mapping to the regular representations. This leads to yet more regularity questions, such as when the ''boundary" $\partial p = \overline{p} - \mathring{p}$ of some self-similar projection $p$ vanishes in a given quotient. Such a fact would likely be necessary if some kind of self-similar decomposition is to be useful for analysis (see, for example, Chapter 3 of \cite{myPhDThesis}).

Alternatively, we may be able to use results in the regularity theory of projections (see \cite{AkemannEilers2002} and references therein) to control $\partial p$ in a quotient, or the following theorem of Akemann (\cite{Akemann1969}, Theorem II.7) to drop the need for taking closure at the level of $\evnA$ entirely.
\begin{thm}\label{thmJoinClosed}
Suppose $p, q \in \cProj$ are such that
\[||p(q-p \wedge q)|| < 1.\]
Then $p \vee q \in \cProj$.
\end{thm}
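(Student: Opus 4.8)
The plan is to reduce the statement to a classical fact about projections in a von Neumann algebra once we know that closedness is characterized by upper semicontinuity of the pairing against states (Lemma \ref{lemmaClosedUpperSemicontinuous}). Concretely, I would first recall the standard characterization of closed projections via their associated hereditary $C^*$-subalgebras: a projection $p\in\Proj$ is closed precisely when there is a hereditary $C^*$-subalgebra $B_p\subset\A$ (equivalently a closed left ideal $L_p\subset\A$) whose weak-$*$ closure in $\evnA$ has support projection $p$; the open projection $1-p$ is the $\sigma$-weak limit of an increasing approximate unit of $B_p$. This is the form in which Akemann states and proves the result in \cite{Akemann1969}, so the cleanest route is simply to invoke that theorem directly, noting (as elsewhere in the excerpt) that the non-unital case is handled by passing to the unitisation or to the weak-$*$ compact quasi-state space.

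If instead one wants a self-contained argument, I would argue as follows. Let $r=p\vee q\in\Proj$; we want to show $r$ is closed, i.e.\ that $\States\ni\psi\mapsto\psi(r)$ is weak-$*$ upper semicontinuous. The hypothesis $\|p(q-p\wedge q)\|<1$ is a norm bound on the ``angle'' between $p$ and $q$ relative to their meet, and the key algebraic input is that under this bound the element $p+q-p\wedge q$ is bounded below on the range of $r$ by a positive constant, so that there is a continuous function $h:[0,\infty)\to[0,1]$ with $h(0)=0$ and $h(p+q-p\wedge q)=r$. Indeed, $\|p(q-p\wedge q)\|<1$ forces the spectrum of $p+q$ restricted to $(p\vee q - p\wedge q)\evnA(p\vee q-p\wedge q)$ to be bounded away from $0$ (this is the classical ``$pq$ close to commuting'' estimate; compare the analysis of the spectrum of $p+q$ for two projections). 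Then, picking increasing approximate units $(a_n)$ and $(b_n)$ in the hereditary subalgebras $B_{1-p}$, $B_{1-q}$ witnessing openness of $1-p$ and $1-q$, the elements $a_n\wedge b_n$ — or better, a suitable functional-calculus combination built from $a_n+b_n$ and the function $h$ — lie in $\A$, increase $\sigma$-weakly, and have supremum $1-r$. That exhibits $1-r$ as open, hence $r$ as closed.

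The main obstacle, and the step I expect to require the most care, is the passage from the norm hypothesis $\|p(q-p\wedge q)\|<1$ to the spectral-gap statement that $p+q-p\wedge q$ (or equivalently $p\vee q - p\wedge q$ minus something controllable) is invertible when compressed to $r\evnA r\ominus (p\wedge q)\evnA(p\wedge q)$, together with the verification that the resulting functional-calculus elements actually land inside $\A$ rather than merely $\evnA$. The first part is a two-projection linear-algebra computation, but one has to be careful to do it uniformly across the direct-integral/representation decomposition of the pair $(p,q)$; reducing to the generic $2\times 2$ position of two projections and checking the estimate there is the honest way to do it. The second part — membership in $\A$ — is exactly where closedness of $p$ and $q$ individually gets used: one approximates $h(p+q-p\wedge q)$ from within the hereditary subalgebras attached to $p$ and $q$, using that $h$ vanishes at $0$ so that the approximation respects the algebra. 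Given that all of this is precisely Akemann's Theorem II.7, I would in the final write-up simply cite \cite{Akemann1969}, and include the sketch above only as a remark for the reader's intuition.
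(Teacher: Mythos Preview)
Your proposal is correct and matches the paper's treatment: the paper does not prove this theorem at all but simply quotes it as Akemann's result (\cite{Akemann1969}, Theorem II.7), exactly as you ultimately recommend. Your sketch of the two-projection spectral-gap argument is a reasonable outline of how Akemann's proof goes, but for the purposes of this paper a bare citation suffices.
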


\bibliographystyle{plain}
\bibliography{mybib.bib}

\end{document}